\newtheorem{theorem}{Theorem}[section]
\newtheorem{corollary}[theorem]{Corollary}
\newtheorem{lemma}[theorem]{Lemma}
\newtheorem{problem}[theorem]{Problem}
\newtheorem{proposition}[theorem]{Proposition}
\theoremstyle{definition}
\newtheorem{remark}[theorem]{Remark}
\def\B{{\mathcal{B}}}
\def\P{{\mathcal{P}}}
\def\U{{\mathcal{U}}}
\def\Z{{\mathcal{Z}}}
\def\LS{{\mathcal{LS}}}
\begin{document}

\title[Order isomorphisms]{Order isomorphisms of operator intervals \\
in von Neumann algebras}

\author[M. Mori]{Michiya Mori}

\address{Graduate School of Mathematical Sciences, the University of Tokyo, Komaba, Tokyo, 153-8914, Japan.}
\email{mmori@ms.u-tokyo.ac.jp}

\subjclass[2010]{Primary 47B49, Secondary 46B40, 46L10.} 

\keywords{von Neumann algebra; operator interval; order isomorphism; preserver problem}

\date{}

\begin{abstract}
We give a complete description of order isomorphisms between operator intervals in general von Neumann algebras. 
For the description, we use Jordan $^*$-isomorphisms and locally measurable operators. 
Our results generalize several works by L. Moln\'ar and P. \v{S}emrl on type I factors. 
\end{abstract}
\maketitle
\thispagestyle{empty}

\section{Introduction}
Let $(X_1, \leq_1)$ and $(X_2, \leq_2)$ be two ordered sets. 
A bijection $\phi\colon X_1\to X_2$ is called an \emph{order isomorphism} if it satisfies $x\leq_1 y\Longleftrightarrow \phi(x)\leq_2\phi(y)$ for all $x, y\in X_1$. 

For a C$^*$-algebra $A$, we use the symbol $A_{sa}$ to mean the collection of all self-adjoint elements in $A$. 
In this paper, we consider order isomorphisms with respect to the natural order structure on $A_{sa}$. 
For an element $a\in A$, we say $a$ is \emph{positive} if there exists an element $x\in A$ such that $a=x^*x$. 
The symbol $A_+$ means the collection of positive elements in $A$. 
It is well-known that when $A$ is realized as a C$^*$-subalgebra of $\B(H)$ (the algebra of bounded linear operators on a complex Hilbert space $H$), $a\in A$ is positive if and only if $a$ satisfies $\langle a\xi, \xi\rangle \geq 0$ for every $\xi\in H$. 
For $a_1, a_2\in A_{sa}$, we write $a_1\leq a_2$ if $a_2-a_1$ is positive. 
Then the relation $\leq$ is an order on $A_{sa}$. 

In 1952, Kadison proved the following. 

\begin{theorem}[Kadison, {\cite[Corollary 5]{K}}]\label{Kadison}
Let $A$ and $B$ be two unital C$^*$-algebras. 
Suppose that $\phi\colon A_{sa}\to B_{sa}$ is an order isomorphism. 
Suppose further that $\phi$ is linear and satisfies $\phi(1)=1$. 
Then, there exists a unique Jordan $^*$-isomorphism $J\colon A\to B$ that extends $\phi$. 
\end{theorem}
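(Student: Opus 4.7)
The plan is to translate the order-theoretic hypotheses into algebraic structure, extend $\phi$ complex-linearly, and then invoke Kadison's inequality to promote the resulting map to a Jordan $*$-homomorphism. Because $\phi$ is a linear order-isomorphism with $\phi(1)=1$, both $\phi$ and $\phi^{-1}$ are unital positive linear maps: $a\geq 0\Leftrightarrow a\geq \phi^{-1}(0)\Leftrightarrow \phi(a)\geq 0$. Combined with the order-unit description $\|x\|=\inf\{\lambda>0:-\lambda 1\leq x\leq \lambda 1\}$ valid on $A_{sa}$, this yields $\|\phi(a)\|=\|a\|$ for every $a\in A_{sa}$, so $\phi$ is a bounded isometry on the self-adjoint part. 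I would then extend it complex-linearly to $J\colon A\to B$ by $J(a+ib)=\phi(a)+i\phi(b)$ for $a,b\in A_{sa}$; this produces a bounded, bijective, unital, $*$-preserving linear map, and its uniqueness as an extension of $\phi$ is automatic from $A=A_{sa}+iA_{sa}$.

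The main content is then to show that $J$ preserves squares of self-adjoint elements, for then polarisation of $(a+b)^2-(a-b)^2=4(a\circ b)$ forces $J$ to preserve the Jordan product $a\circ b=\tfrac{1}{2}(ab+ba)$ on $A_{sa}$, and hence on all of $A$ by complex bilinearity. The tool I would use here is Kadison's inequality: every unital positive linear map $\Phi$ between C$^*$-algebras satisfies $\Phi(a^2)\geq \Phi(a)^2$ for self-adjoint $a$. Applied to $\phi$ it gives $\phi(a^2)\geq \phi(a)^2$; applied to $\phi^{-1}$ at the element $\phi(a)$ it gives $\phi^{-1}(\phi(a)^2)\geq a^2$, which on applying the order-preserving map $\phi$ becomes $\phi(a)^2\geq \phi(a^2)$. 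Combining the two inequalities forces $\phi(a^2)=\phi(a)^2$.

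The main obstacle is Kadison's inequality itself. Unlike the Schwarz inequality for $2$-positive maps, it is not a formal consequence of a positivity argument applied to a $2\times 2$ block matrix, since a unital positive linear map on a noncommutative C$^*$-algebra need not be $2$-positive; a more delicate spectral-theoretic argument is needed. In the present writeup it can simply be quoted from \cite{K}, where it is the main technical lemma preceding the corollary under discussion. An alternative route that avoids the inequality altogether is to observe that $\phi$ restricts to an affine bijection between the order intervals $\{a\in A_{sa}:0\leq a\leq 1\}$ and $\{b\in B_{sa}:0\leq b\leq 1\}$, hence sends extreme points to extreme points, i.e.\ projections to projections while preserving orthogonality ($p\perp q$ iff $p+q\leq 1$), and then to invoke Dye-type theorems on projection lattices; this would be substantially heavier machinery than Kadison's original argument but has the advantage of being better aligned with the projection-lattice methods used later in the paper.
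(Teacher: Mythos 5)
The paper offers no proof of this theorem at all---it is quoted directly from Kadison \cite{K}---and your argument is correct and is in essence Kadison's original proof of that Corollary 5: extend $\phi$ complex-linearly, apply the generalized Schwarz inequality $\Phi(a^2)\geq\Phi(a)^2$ to both of the unital positive maps $\phi$ and $\phi^{-1}$ to force $\phi(a^2)=\phi(a)^2$ for self-adjoint $a$, and polarize to get preservation of the Jordan product, uniqueness being automatic from complex-linearity. The only caveat is your parenthetical alternative route: at this level of generality it does not work, since Dye's theorem is a von Neumann algebra result and a unital C$^*$-algebra may have essentially no nontrivial projections (e.g.\ $C[0,1]$), but as that aside is not load-bearing the proof itself stands.
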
 

Recall that a linear bijection $J\colon A\to B$ between two C$^*$-algebras is called a \emph{Jordan $^*$-isomorphism} if it satisfies $J(x^*)= J(x)^*$ and $J(x_1x_2+x_2x_1) = J(x_1)J(x_2)+J(x_2)J(x_1)$ for any $x, x_1, x_2\in A$. 
It is easy to verify that a Jordan $^*$-isomorphism between two C$^*$-algebras preserves the order. 

In this paper, we want to weaken the assumption in Kadison's theorem. 
In particular, what if we drop the assumption of linearity?
There are several studies on nonlinear order isomorphisms between self-adjoint parts of commutative C$^*$-algebras (e.g.\ \cite{CS}). 
As we can see in the case $A=B=\mathbb{C}$, such mappings can be far from linear.

However, in noncommutative settings, chances are that one can obtain a conclusion that is completely different from commutative cases. 
In fact, in the special case $A=B=\B(H)$, Moln\'ar gave the results below. 
The collection $E(\B(H)):= \{a\in \B(H)_{sa}\mid 0\leq a\leq 1\}$ is called the \emph{effect algebra} on $H$.

\begin{theorem}[Moln\'ar, {\cite[Theorems 1 and 2]{M01}} and {\cite[Corollary 4]{M03}}]\label{Molnar}
Let $H$ be a complex Hilbert space with $\operatorname{dim}H\geq 2$.
\begin{enumerate}[$(1)$]
\item If $\phi \colon \B(H)_{sa}\to \B(H)_{sa}$ is an order automorphism, then 
there exist a linear or conjugate-linear bounded invertible operator $x$ on $H$ and $b\in \B(H)_{sa}$ that satisfy $\phi(a) = xax^* + b$ for every $a\in \B(H)_{sa}$.
\item If $\phi \colon \B(H)_+\to \B(H)_+$ is an order automorphism, then there exists a linear or conjugate-linear bounded invertible operator $x$ on $H$ that satisfies $\phi(a) = xax^*$ for every $a\in \B(H)_+$.
\item If $\phi\colon E(\B(H))\to E(\B(H))$ is an order automorphism with $\phi(1/2)=1/2$, then there exists a unitary or an antiunitary operator $u$ on $H$ that satisfies $\phi(a) = uau^*$ for every $a\in E(\B(H))$. 
\end{enumerate}
\end{theorem}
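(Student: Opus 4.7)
The plan is to treat parts (3), (2), and (1) in that order, since (1) reduces directly to (2) by translation and (3) gives the cleanest illustration of the common strategy: identify an order-theoretically invariant class of elements (the projections, or the rank-one operators), restrict $\phi$ to obtain an automorphism of a structure for which a classification is available, and then extend back to the full domain.

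For part (3), I would first establish an order-theoretic description of the projection lattice $P(H)$ inside $E(\B(H))$. The key ingredient is Kadison's anti-lattice theorem --- in $\B(H)_{sa}$ the infimum of two elements exists only when they are comparable --- which yields the characterization that $a\in E(\B(H))$ is a projection if and only if $\inf(a,1-a)=0$ in the order. Since $\phi$ preserves the extremes $0$ and $1$ (the least and greatest elements of the effect algebra) and the normalization $\phi(1/2)=1/2$ together with the order pins down the complementation $a\mapsto 1-a$, the map $\phi$ preserves projections. Its restriction to $P(H)$ is then an orthocomplemented lattice automorphism, so by Dye's theorem (with a separate argument needed when $\dim H=2$) there exist a unitary or antiunitary $u$ on $H$ with $\phi(p)=upu^*$ for every projection $p$. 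Composing with conjugation by $u^*$ reduces to the case that $\phi$ fixes every projection, after which I would extend to all of $E(\B(H))$ by showing that each effect is determined by its order relations to scalar multiples of projections, with scalars recovered from chains inside $[0,p]$ using the fixed points $0$, $1/2$, $1$.

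For part (2) the analogous order-theoretic invariant is rank-one: a nonzero $a\in \B(H)_+$ has rank one if and only if the interval $[0,a]$ is totally ordered, so $\phi$ preserves the rank-one operators. Two such operators $\lambda p$ and $\mu q$ lie on the same ray precisely when they admit a common upper bound that is itself rank one, producing a bijection $\tilde\phi$ of the projective space. After characterizing orthogonality of rank-one projections order-theoretically, Uhlhorn's sharpening of Wigner's theorem yields a linear or conjugate-linear bijection $x$ on $H$ inducing $\tilde\phi$; rescaling $x$ to match the action of $\phi$ along chains inside each $[0,\lambda p]$ then promotes this to $\phi(a)=xax^*$ on all rank-one operators, and the formula extends to $\B(H)_+$ using that order automorphisms respect the suprema defining spectral projections. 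Part (1) follows immediately: for $\phi$ an order automorphism of $\B(H)_{sa}$, the translate $\psi(a):=\phi(a)-\phi(0)$ is still an order automorphism, fixes $0$, hence restricts to an order automorphism of $\B(H)_+$, and part (2) produces the desired $x$, with $b=\phi(0)$.

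I expect the principal obstacle to lie in the passage from the classification on projections (or rank-one operators) back to the full positive cone or effect algebra. The order-theoretic identification of the ``scalar factor'' is the delicate point, since scalars only appear implicitly through chains in order intervals, not through any additive structure that $\phi$ respects a priori. Controlling these scalars requires carefully leveraging the available fixed points ($1/2$ in part (3); $0$ in parts (1) and (2)) together with the rigidity supplied by Kadison's anti-lattice theorem to transfer additive information from the projection lattice to the entire interval.
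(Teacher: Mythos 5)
Your overall strategy (classify on projections or rank-one operators, then extend) is the classical Moln\'ar--\v{S}emrl route rather than the paper's (the paper never proves Theorem \ref{Molnar} directly; it deduces it from the general von Neumann algebra results, Theorems \ref{Jordan} and \ref{rest}), but as written the sketch has genuine gaps. The clearest one is in part (2): you propose to ``characterize orthogonality of rank-one projections order-theoretically'' in $\B(H)_+$ and then invoke Uhlhorn's theorem. Orthogonality is \emph{not} an order invariant of the positive cone: $a\mapsto xax^*$ with $x$ invertible and non-unitary is an order automorphism of $\B(H)_+$ that sends suitable orthogonal rank-one projections to rank-one positive operators with non-orthogonal ranges, so no order-theoretic definition of orthogonality can exist there, and Uhlhorn (whose conclusion is a unitary or antiunitary) is the wrong tool --- consistently with the fact that the conclusion of (2) allows an arbitrary invertible $x$. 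The workable classical route is to extend the induced map on rank-one projections to the full lattice of closed subspaces via suprema and apply the Fillmore--Longstaff/projective-geometry theorem to get an invertible semilinear operator, then argue the field automorphism is the identity or conjugation. A related problem occurs in (3): the assertion that the order together with $\phi(1/2)=1/2$ ``pins down'' $a\mapsto 1-a$ is unjustified (already in the commutative model $E(L^\infty)$ there are order automorphisms fixing $1/2$ that do not commute with complementation), and in fact proving $\phi(p^{\perp})=\phi(p)^{\perp}$ is a substantive step; in this paper it is exactly the content of Lemma \ref{lem3} (via Halmos' two-projection theorem), used together with Lemma \ref{lem-proj}, which characterizes projections \emph{without} any reference to complementation, and Lemma \ref{lem1}.

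The scalar-recovery step, which you flag as delicate, is not actually supplied: chains inside $[0,p]$ cannot recover scalars, since an order automorphism of a totally ordered interval can be an arbitrary increasing bijection, and Kadison's anti-lattice theorem provides no additive rigidity. What forces $\phi(cp)=c\phi(p)$ is noncommutative rigidity coming from the relative position of non-commuting projections --- in the paper, the $2\times 2$ matrix inequalities of Lemma \ref{lem4}, applied with $p\prec p^{\perp}$ when $\operatorname{dim}H\geq 3$ and with a separate type I$_2$-style argument when $\operatorname{dim}H=2$; without an argument of this kind the passage from $\P(\B(H))$ (or from rank-one operators) to all of $E(\B(H))$ or $\B(H)_+$ is unproven. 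Finally, in part (1) the translation trick only yields $\phi(a)=xax^*+\phi(0)$ for $a\geq 0$; extending the identity to all of $\B(H)_{sa}$ needs an additional affinity argument (in the paper this is Corollary \ref{affine} feeding into Theorem \ref{rest}(1)), so the claim that (1) ``follows immediately'' from (2) is too quick.
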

 
\v{S}emrl gave a complete description of order isomorphisms between operator intervals of $\B(H)$. 
In particular, he gave a complete description of order automorphisms on the effect algebra. 
Part of what he proved can be summarized in the following: 
\begin{theorem}[\v{S}emrl, \cite{S12}, {\cite[Theorem 2.4]{S17}}]\label{Semrl}
Let $H$ be a complex Hilbert space with $\operatorname{dim}H\geq 2$.
\begin{enumerate}[$(1)$]
\item If $\phi\colon E(\B(H))\to E(\B(H))$ is an order automorphism, then both $\phi(1/2)$ and $1-\phi(1/2)$ are invertible in $\B(H)$. 
\item Conversely, if $b\in E(\B(H))$ and both $b$ and $1-b$ are invertible in $\B(H)$, then there exists an order automorphism $\phi\colon E(\B(H))\to E(\B(H))$ that satisfies $\phi(1/2) = b$. 
\end{enumerate}
\end{theorem}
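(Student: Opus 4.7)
The two parts require quite different techniques, so I treat them separately.

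For part (1), the task is to identify an order-theoretic invariant of $1/2 \in E(\B(H))$ that forces $b := \phi(1/2)$ and $1-b$ to be invertible. Order isomorphisms preserve the minimum, maximum, and all order intervals, so $\phi(0)=0$, $\phi(1)=1$, and $\phi$ restricts to order isomorphisms $[0, 1/2]\to[0,b]$ and $[1/2, 1]\to[b, 1]$. Since both $[0, 1/2]$ and $[1/2, 1]$ are themselves order-isomorphic to $E(\B(H))$ via the affine bijections $x \mapsto 2x$ and $x \mapsto 2x-1$, the same abstract conclusion holds for $[0, b]$ and $[b, 1]$. The main technical step is to upgrade these abstract poset isomorphisms to the analytic conclusion that $b$ and $1-b$ are invertible in $\B(H)$. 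Crude invariants will not suffice: for a projection $p$ of infinite rank with $1-p$ also of infinite rank, $[0,p]\cong E(p\B(H)p)\cong E(\B(H))$, yet $p$ is not invertible. So a finer invariant is required, one that exploits the joint embedding of $[0, b]$, $[b, 1]$, and the elements of $E(\B(H))$ not comparable to $b$.

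For part (2), we construct $\phi$ explicitly, using as scalar model the M\"obius transformation
\[
f_b(s)=\frac{bs}{bs+(1-b)(1-s)},\qquad s\in[0,1],
\]
a strictly increasing bijection of $[0,1]$ onto itself sending $0, 1/2, 1$ to $0, b, 1$ and satisfying the clean identity
\[
\frac{f_b(s)}{1-f_b(s)}=\frac{b}{1-b}\cdot\frac{s}{1-s}.
\]
Since $b$ and $1-b$ commute, the operator $c:=b^{1/2}(1-b)^{-1/2}$ is well defined, positive, self-adjoint, and invertible. For $a\in E(\B(H))$ with $1-a$ invertible, set
\[
y:=c\,a(1-a)^{-1}c,\qquad \phi(a):=y(1+y)^{-1}.
\]
A direct calculation gives $\phi(1/2)=b$. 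By L\"owner's theorem, the map $s\mapsto s(1-s)^{-1}$ is operator monotone on $[0,1)$ with operator monotone inverse $t\mapsto t(1+t)^{-1}$, and conjugation by the invertible positive $c$ is an order automorphism of $\B(H)_+$. Composing these three order isomorphisms shows that $\phi$ is an order isomorphism on the ``interior'' set $\{a\in E(\B(H)): 1-a\text{ invertible}\}$, with inverse obtained by the same formula with $b$ replaced by $1-b$. To complete the construction, one extends $\phi$ to those $a$ with $1-a$ not invertible by a supremum-of-interior-approximants argument, using that order isomorphisms preserve suprema, and checks that the extended map is still an order automorphism of the full $E(\B(H))$.

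The main hurdle is part (1): extracting the analytic property of invertibility of $b$ and $1-b$ from a purely order-theoretic hypothesis, where the naive comparison of order-interval-types is insufficient. Part (2) is, modulo the boundary extension, a concrete computation driven by operator-monotone function theory.
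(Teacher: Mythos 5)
Your part (1) is not a proof: you correctly observe that the naive invariant (the order type of the intervals $[0,\phi(1/2)]$ and $[\phi(1/2),1]$) cannot work, but then you only assert that ``a finer invariant is required'' without producing one, and that invariant is exactly the crux. The missing idea, which is how the paper derives this statement from its main results, is the order-theoretic characterization of invertibility in Lemma \ref{invertible}(2): $b\in E(\B(H))$ is invertible if and only if there is no nonzero $c\in E(\B(H))$ such that the only $x\in E(\B(H))$ with $x\leq b$ and $x\leq c$ is $x=0$. (For $\B(H)$: if $b$ is not invertible, choose a unit vector $\xi\notin\operatorname{ran}(b^{1/2})$ and let $c$ be the rank-one projection onto $\mathbb{C}\xi$; any positive $x\leq c$ is a scalar multiple of $c$, and $\lambda c\leq b$ with $\lambda>0$ would force $\xi\in\operatorname{ran}(b^{1/2})$.) This condition, imposed simultaneously on $b$ and $1-b$, is formulated purely in terms of the order on $E(\B(H))$, is satisfied by $1/2$, and is manifestly preserved by any order automorphism; hence $\phi(1/2)$ and $1-\phi(1/2)$ are invertible. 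Without this (or some equivalent) your part (1) has no argument at all.

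Your part (2) has the right shape (a M\"obius-type formula conjugated by $c=b^{1/2}(1-b)^{-1/2}$, and $\phi(1/2)=b$ does check out), but the extension from $\{a:\ 1-a\ \text{invertible}\}$ to all of $E(\B(H))$ is asserted, not proved, and the phrase ``order isomorphisms preserve suprema'' does not deliver it: your $\phi$ is only defined on the interior, so to show the sup-extension is well defined, bijective, and order-reflecting you would need, e.g., that $\sup\{\phi^{-1}(y'):y'\leq\Phi(a),\ 1-y'\ \text{invertible}\}=a$, and $y'\leq\sup_{a'}\phi(a')$ does not give $y'\leq\phi(a')$ for any single $a'$; note also that your interior excludes every nonzero projection, so this is not a small boundary. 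The paper (following \v{S}emrl, and in Proposition \ref{pq}) sidesteps this entirely by using the bounded operator monotone functions $f_{\alpha}$ with $0<\alpha<1$ and $f_{\beta}$ with $\beta<0$: the formula $\phi(a)=f_{\beta}\bigl((f_{\alpha}(T^2))^{-1/2}f_{\alpha}(TaT)(f_{\alpha}(T^2))^{-1/2}\bigr)$ makes sense for every $a\in E(\B(H))$ at once (since $f_\alpha$ is bounded on $[0,\infty)$), is an order automorphism by operator monotonicity, and $T$ can be solved for so that $\phi(1/2)$ is any prescribed $b$ with $b$ and $1-b$ invertible (Proposition \ref{characterization} composes two such maps to remove the residual constraint). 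Either adopt that formula or supply a genuine continuity/surjectivity argument for your boundary extension.
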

Actually, \v{S}emrl gave a more concrete description for such mappings. 
For the details, see \cite{S17}, \cite{S18} and \cite{PS}.
Note that an order automorphism on the effect algebra is not necessarily affine.
(In this paper, we say a mapping $\phi\colon \mathcal{C}_1\to \mathcal{C}_2$ between two convex sets is \emph{affine} if it satisfies $\phi(\lambda a+(1-\lambda)b) = \lambda \phi(a)+ (1-\lambda)\phi(b)$ for any $a, b\in \mathcal{C}_1$ and $\lambda\in [0, 1]$.)\medskip

Let $a_1, a_2\in \B(H)_{sa}$ satisfy $a_1\leq a_2$. 
Then, it is easy to see that the subset $\{a\in\B(H)_{sa}\mid a_1\leq a\leq a_2\}$ is totally ordered if and only if $a_2-a_1$ is some scalar multiple of a rank-one projection on $H$. 
In the proofs of the above results by Moln\'ar and \v{S}emrl, rank-one projections on $H$ are crucially important. 
However, it seems to be difficult to use similar ideas in more general settings. 

In this paper, we give a generalization of Moln\'ar's theorem and \v{S}emrl's in the setting of arbitrary von Neumann algebras. 
Although there may be no minimal (or rank-one) projections in such a situation, we can use properties of  projection lattices to obtain our results. 
We heavily depend on the seminal result due to Dye in \cite{D} on orthogonality preserving mappings between projection lattices of two von Neumann algebras. 

Let $M$ be a von Neumann algebra. 
We use the notation $M^{-1}$ to mean the collection of invertible elements in $M$.
Let $a\in M_+$. 
We say that $a$ is \emph{locally measurably invertible} in $M$ if there exists no element $b\in M_+\setminus\{0\}$ with the following property: 
If $x\in M_+$ satisfies $x\leq a$ and $x\leq b$, then $x=0$. 
(In Section \ref{LS}, we show that this is equivalent to the condition that $a$ is invertible in the algebra of locally measurable operators for $M$, 
so that this terminology is consistent.) 
The \emph{effect algebra} of $M$ is the collection $E(M):= \{a\in M_{sa}\mid 0\leq a\leq 1\}$. 
It is easy to see that every order isomorphism between effect algebras of von Neumann algebras preserves the collection of locally measurably invertible elements. 

The following is a summary of main results in this paper (Theorems \ref{Jordan}, \ref{rest} and Proposition \ref{characterization}). 
\begin{theorem}\label{main}
Let $M$ and $N$ be two von Neumann algebras. Assume that $M$ does not admit a type I$_1$ direct summand. 
\begin{enumerate}[$(1)$]
\item If $\phi \colon M_{sa}\to N_{sa}$ is an order isomorphism, then there exist a Jordan $^*$-isomorphism $J\colon M\to N$ and elements $x\in N^{-1}$, $b\in N_{sa}$ that satisfy $\phi(a) = xJ(a)x^* + b$ for every $a\in M_{sa}$. 
\item If $\phi \colon M_+\to N_+$ is an order isomorphism, then there exist a Jordan $^*$-isomorphism $J\colon M\to N$ and an element $x\in N^{-1}$ that satisfy $\phi(a) = xJ(a)x^*$ for every $a\in M_+$. 
\item If $\phi\colon E(M)\to E(N)$ is an order isomorphism with $\phi(1/2)=1/2$, then there exists a Jordan $^*$-isomorphism $J\colon M\to N$ that satisfies $\phi(a) = J(a)$ for every $a\in E(M)$. 
\item Let $\phi\colon E(M)\to E(N)$ be an order isomorphism. Then $M$ is Jordan $^*$-isomorphic to $N$. In addition, both $\phi(1/2)$ and $1-\phi(1/2)$ are locally measurably invertible in $N$. 
\item Conversely, suppose that $M$ and $N$ are Jordan $^*$-isomorphic and $b\in E(N)$. If both $b$ and $1-b$ are locally measurably invertible in $N$, then there exists an order isomorphism $\phi\colon E(M)\to E(N)$ that satisfies $\phi(1/2) = b$. 
\end{enumerate}
\end{theorem}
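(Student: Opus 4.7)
The plan is to establish part (3) as the central result, then deduce (1), (2), and (4) from it via reductions, and treat (5) as a separate explicit construction inside the algebra of locally measurable operators.

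The heart of the argument is (3). Given an order isomorphism $\phi: E(M)\to E(N)$ with $\phi(1/2)=1/2$, I would show that $\phi$ restricts to an orthoisomorphism of projection lattices $P(M)\to P(N)$. This requires (i) an order-theoretic characterization of the projections inside $E(M)$ --- e.g., as those $p$ for which the interval $[0,p]$ has a specific lattice-theoretic sharpness property not involving subtraction --- and (ii) an order-theoretic characterization of the orthogonality relation on projections, phrased via specific joins and meets inside $E(M)$. The normalization $\phi(1/2)=1/2$ supplies a natural involutive symmetry that aids both characterizations. Dye's theorem, with any type I$_2$ direct summand handled separately via the type I factor results of Moln\'ar and \v{S}emrl, then delivers a Jordan $^*$-isomorphism $J: M\to N$ extending $\phi|_{P(M)}$. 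Spectral theory together with the monotone-continuity of order isomorphisms, again using $\phi(1/2)=1/2$ to pin down scalar coefficients in step-function approximations, extends the equality $\phi=J$ from $P(M)$ to all of $E(M)$.

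For (1) and (2): an order isomorphism $\phi:M_+\to N_+$ sends $1$ to some $y\in N_+$ that must be invertible in $N$ by an order-theoretic characterization of invertibility of positive elements; conjugating by $y^{-1/2}$ reduces the restriction of $\phi$ to $[0,1]$ to an effect-algebra isomorphism, to which a normalization via (5) followed by (3) produces the desired Jordan $^*$-iso and element $x\in N^{-1}$. Part (1) follows from (2) by translating $\phi$ by $-\phi(0)$ and passing to bounded intervals. For (4), given an arbitrary $\phi:E(M)\to E(N)$, one first shows that $\phi(1/2)$ and $1-\phi(1/2)$ are locally measurably invertible in $N$ by transferring the defining property through $\phi$ (the condition ``no disjoint non-zero dominated element'' is purely order-theoretic); then by (5) there is an order automorphism $\psi$ of $E(N)$ with $\psi(1/2)=\phi(1/2)$, and $\psi^{-1}\circ\phi$ falls under (3), yielding the Jordan $^*$-isomorphism between $M$ and $N$. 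For (5), identifying $M$ with $N$ via the hypothesized Jordan $^*$-iso and using that both $b$ and $1-b$ are invertible in $\LS(N)$, I would define the required order automorphism of $E(N)$ as an operator analogue of the unique scalar M\"obius transformation sending $0,\,1/2,\,1$ to $0,\,b,\,1$, realized inside $\LS(N)$, and verify bijectivity and order-preservation directly (the inverse being the analogous formula).

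The principal obstacle is the order-theoretic characterization of projections and of orthogonality in $E(M)$ in the absence of minimal projections, on which the prior type I factor proofs of Moln\'ar and \v{S}emrl heavily relied; replacing rank-one arguments by a purely lattice-theoretic description valid in arbitrary von Neumann algebras --- and feeding it into Dye's theorem --- is the main technical innovation. This is also where the hypothesis ruling out type I$_1$ direct summands enters, since in the commutative case nonlinear order isomorphisms can be essentially arbitrary. A closely related difficulty, needed for (4), is proving that order isomorphisms of effect algebras preserve local measurable invertibility, which furnishes the bridge between the purely order-theoretic hypothesis of (4) and the algebraic conclusion used via (5).
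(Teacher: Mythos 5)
Your overall architecture --- part (3) as the core result via order-theoretic characterizations of projections and of orthogonality feeding into Dye's theorem, part (5) via M\"obius-type automorphisms realized in $\LS(N)$, and parts (1), (2), (4) by reduction --- is the same as the paper's. However, two steps in your plan fail as described.

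First, you propose to dispatch type I$_2$ direct summands ``via the type I factor results of Moln\'ar and \v{S}emrl.'' A type I$_2$ summand is $M_2(A)$ for an abelian von Neumann algebra $A$, which is a factor only when $A=\mathbb{C}$, so those results do not apply; and this is precisely the case excluded from Dye's theorem, so it cannot be absorbed into the general argument either. The paper spends the larger part of the proof of Theorem \ref{Jordan} on a bespoke argument for $M_2(A)$: it pins down $\phi$ on operators of the form $\left(\begin{smallmatrix}s^2 & st\lambda\\ st\bar{\lambda} & t^2\end{smallmatrix}\right)$ by repeated use of the $2\times 2$ positivity criterion of Lemma \ref{lem4} together with norm and relative-position arguments, and only then assembles a Jordan $^*$-isomorphism by hand from its action on $\P(\Z(M))$ and three special projections. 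Some replacement for this computation is unavoidable.

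Second, your derivation of (2) contains two unjustified claims. (i) You assert that $\phi(1)$ must be invertible in $N$ ``by an order-theoretic characterization of invertibility of positive elements.'' What is order-theoretic (Lemma \ref{invertible}) is invertibility in $\LS(N)$, which is strictly weaker; and since $M_{[0,a]}$ is order isomorphic to $E(pMp)$ for $p$ the support projection of $a$ regardless of whether $a$ is invertible (Lemma \ref{normalize}), no characterization internal to the interval below $a$ can detect invertibility in $N$. The invertibility of $\phi(1)$ is in fact a consequence of the theorem, not an input. (ii) Even granting that $\phi(1)$ is invertible, normalizing and applying (5) followed by (3) only yields $\phi(a) = \phi(1)^{1/2}\,\psi(J(a))\,\phi(1)^{1/2}$ on $E(M)$, where $\psi$ is an order automorphism of $E(N)$ that need not be affine (Theorem \ref{Semrl} and Proposition \ref{pq} show non-affine ones exist in abundance), so the form $xJ(a)x^*$ does not follow. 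The missing ingredient is that an order isomorphism defined on the whole cone is forced to be affine: the paper proves $\phi(2)=2\phi(1)$ by linearizing the restrictions to $M_{[0,c]}$ via Proposition \ref{linear} and letting $c\to\infty$, and then invokes Corollary \ref{affine}. Your plan never exploits the unboundedness of the domain, which is exactly where this rigidity comes from; the same affinity mechanism is also what lets the paper pass from $M_+$ to all of $M_{sa}$ in part (1).
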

Note that every Jordan $^*$-isomorphism between two von Neumann algebras decomposes into the direct sum of a $^*$-isomorphism and a $^*$-antiisomorphism. See for example \cite[Exercise 10.5.26]{KR}.

Consider the case $M=N=\B(H)$. 
For an element $b\in \B(H)_+$, $b$ is locally measurably invertible in $\B(H)$ if and only if it is invertible in $\B(H)$. 
In addition, it is known that a linear bijection $\phi$ from $\B(H)$ onto itself is a Jordan $^*$-automorphism if and only if there exists a unitary or an antiunitary operator $u$ on $H$ that satisfies $\phi(a) = uau^*$ for every $a\in\B(H)_{sa}$. 
Thus Theorems \ref{Molnar} and \ref{Semrl} can be obtained as corollaries of Theorem \ref{main}.  

The contents of this paper are as follows. 
Section \ref{pre} is devoted to preliminary results on operator intervals and the algebra of locally measurable operators of a von Neumann algebra. 
Sections \ref{effect} and \ref{interval} are main parts of this paper. 
In Section \ref{effect}, we give a complete description of order isomorphisms between effect algebras of von Neumann algebras. 
We can understand the general form of order isomorphisms between arbitrary operator intervals in von Neumann algebras through Section \ref{interval}. 
Additionally, we give another characterization of order isomorphisms between effect algebras (Proposition \ref{last}). 
Our results in this paper significantly generalize previous studies, but there seems to be much left to be done. 
We present several problems in Section \ref{problem}.

The readers who are interested in this paper are recommended to refer to the book \cite{M} and the articles in the special issue \cite{Acta}, which contain many results and surveys on \emph{preserver problems} with a modern approach.

\section{Preliminaries}\label{pre}
For a general reference on the theory of operator algebras, see for example \cite{T} or \cite{KR}. 
We use the following notation: 
For a von Neumann algebra $M$,  
\begin{itemize}
\item $\P(M):=\{p\in M\mid p=p^2=p^*\}$ means the \emph{projection lattice} of $M$.
\item $\U(M):=\{u\in M\mid uu^*=u^*u=1\}$ means the \emph{unitary group} of $M$. 
\item $\Z(M):=\{x\in M\mid xy=yx \text{ for any }y\in M\}$ is the \emph{center} of $M$. 
\end{itemize}
For $p, q\in \P(M)$, we write 
\begin{itemize}
\item $p\sim q$ and say that $p$ and $q$ are \emph{equivalent} if there exists a partial isometry $v\in M$ such that $vv^*=p$ and $v^*v=q$. 
\item $p\prec q$ if there exists a partial isometry $v\in M$ such that $vv^*=p$ and $v^*v\leq q$. 
\end{itemize}
The symbol $p^{\perp}:= 1-p$ means the orthocomplement projection of $p$. 

\subsection{Operator intervals in von Neumann algebras}\label{pre1}
Let $M$ be a von Neumann algebra. 
For $a_1, a_2\in M_{sa}$, we write $a_1<a_2$ if $a_1\leq a_2$ and $a_2-a_1$ is invertible in $M$. 

For $a_0\in M_{sa}$, we define 
\[
\begin{split}
M_{[a_0, \infty)} &:= \{a\in M_{sa}\mid a_0\leq a\},\\
M_{(a_0, \infty)} &:= \{a\in M_{sa}\mid a_0< a\},\\
M_{(-\infty, a_0]} &:= \{a\in M_{sa}\mid a\leq a_0\} \text{ and}\\
M_{(-\infty, a_0)} &:= \{a\in M_{sa}\mid a< a_0\}.
\end{split}
\]
For $a_1, a_2\in M_{sa}$ with $a_1\leq a_2$ and $a_1\neq a_2$, we define 
\[
M_{[a_1, a_2]} := \{a\in M_{sa}\mid a_1\leq a\leq a_2\}.
\]
In particular, the effect algebra $E(M)$ is equal to $M_{[0, 1]}$.
For $a_1, a_2\in M_{sa}$ with $a_1<a_2$, we define
\[
\begin{split}
M_{[a_1, a_2)} &:= \{a\in M_{sa}\mid a_1\leq a<a_2\},\\
M_{(a_1, a_2]} &:= \{a\in M_{sa}\mid a_1<a\leq a_2\} \text{ and}\\
M_{(a_1, a_2)} &:= \{a\in M_{sa}\mid a_1< a< a_2\}.
\end{split}
\]
A subset $L$ of $M_{sa}$ is called an \emph{operator interval} if it can be written as one of the above forms or $L=M_{sa}$. 

\begin{lemma}\label{normalize}
Let $M$ be a von Neumann algebra. 
\begin{enumerate}[$(1)$]
\item For $a_0\in M_{sa}$, $M_{[a_0, \infty)}$ (resp. $M_{(-\infty, a_0]}$) is order isomorphic to $M_{[0, \infty)}$ (resp. $M_{(-\infty, 0]}$). 
\item For $a_0\in M_{sa}$, both $M_{(a_0, \infty)}$ and $M_{(-\infty, a_0)}$ are order isomorphic to $M_{(0, \infty)}$. 
\item Suppose that $a_1, a_2\in M_{sa}$ satisfy $a_1\leq a_2$ and $a_1\neq a_2$. 
Take the support projection $p\in \P(M)$ of $a_2-a_1$. 
Then $M_{[a_1, a_2]}$ is order isomorphic to $E(pMp)$. 
\item For $a_1, a_2\in M_{sa}$ with $a_1<a_2$, 
$M_{[a_1, a_2)}$ (resp. $M_{(a_1, a_2]}$, $M_{(a_1, a_2)}$) is order isomorphic to  
$M_{[0, \infty)}$ (resp. $M_{(-\infty, 0]}$, $M_{(0, \infty)}$). 
\end{enumerate}
\end{lemma}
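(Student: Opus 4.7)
The plan is to produce explicit order isomorphisms in each case, combining translations, the order-reversing involution $x \mapsto x^{-1}$ on $M_{(0,\infty)}$, and a Douglas-type factorization for part (3). For (1), the translation $a \mapsto a - a_0$ plainly gives the required order isomorphisms. For (2), the same translation handles $M_{(a_0, \infty)} \to M_{(0, \infty)}$; for $M_{(-\infty, a_0)}$ I would use $a \mapsto (a_0 - a)^{-1}$, which is the composition of the order-reversing affine bijection $a \mapsto a_0 - a$ onto $M_{(0, \infty)}$ with the order-reversing involution $x \mapsto x^{-1}$ of $M_{(0, \infty)}$.

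For (3), set $c = a_2 - a_1$ and let $p$ be its support projection. The first observation is that $0 \leq x \leq c$ forces $x = pxp$: from $x \leq \|c\|\,p$ and $x \geq 0$ one has $(1-p)x(1-p) = 0$, hence $x(1-p) = 0$. Next I would show that for each such $x$ there is a unique $y \in E(pMp)$ with $x = c^{1/2} y c^{1/2}$. Existence follows from the Douglas factorization lemma, valid for von Neumann algebras: the inequality $x^{1/2}(x^{1/2})^* \leq c^{1/2}(c^{1/2})^*$ produces $C \in M$ with $\|C\| \leq 1$ and $x^{1/2} = c^{1/2} C$, after which $y := pCC^*p$ lies in $E(pMp)$ and satisfies $c^{1/2} y c^{1/2} = x$. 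Uniqueness and order preservation both reduce to the same observation: since $p$ is the support of $c$, the range of $c^{1/2}$ is dense in $pH$, so if $w \in (pMp)_{sa}$ satisfies $c^{1/2} w c^{1/2} \geq 0$ then $\langle w\,c^{1/2}\xi, c^{1/2}\xi\rangle \geq 0$ for every $\xi \in H$, and density together with continuity of $w$ yields $w \geq 0$ in $pMp$. The map $\phi(a) := y$ is then the required order isomorphism $M_{[a_1, a_2]} \to E(pMp)$, with inverse $b \mapsto a_1 + c^{1/2} b c^{1/2}$.

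For (4), with $c = a_2 - a_1$ now invertible in $M$, I propose the M\"obius-type maps
\[
a \mapsto (a_2 - a)^{-1} - c^{-1} \qquad \text{and} \qquad a \mapsto c^{-1} - (a - a_1)^{-1};
\]
the first gives an order isomorphism $M_{[a_1, a_2)} \to M_{[0, \infty)}$ (and, by restriction, $M_{(a_1, a_2)} \to M_{(0, \infty)}$), and the second gives $M_{(a_1, a_2]} \to M_{(-\infty, 0]}$. Each is a composition of an order-reversing translation with the order-reversing inversion, hence order-preserving, and the explicit inverses $b \mapsto a_2 - (c^{-1} + b)^{-1}$ and $b \mapsto a_1 + (c^{-1} - b)^{-1}$ deliver bijectivity.

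The main obstacle is part (3), where $c$ may fail to be invertible in $M$, so that one cannot literally make sense of $\phi(a) = c^{-1/2}(a - a_1)c^{-1/2}$ inside $M$. The Douglas factorization together with the dense-range argument for $c^{1/2}$ provides a clean, self-contained substitute; alternatively, once Section~\ref{LS} is in place, $c$ is locally measurably invertible in $pMp$ and the same formula is literally valid in $\LS(pMp)$.
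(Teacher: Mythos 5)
Your proof is correct and takes essentially the same approach as the paper: part (3) uses exactly the paper's map $a\mapsto (a_2-a_1)^{1/2}a(a_2-a_1)^{1/2}+a_1$, with you supplying (via Douglas factorization inside the von Neumann algebra and the density of the range of $c^{1/2}$ in $pH$) the surjectivity, injectivity and order-detection details that the paper declares ``easy to see.'' Parts (1), (2) and (4) are the same elementary shift-and-inversion manipulations as in the paper, only composed directly (e.g.\ $a\mapsto(a_0-a)^{-1}$) instead of passing through the auxiliary interval $M_{(0,1)}$, which changes nothing of substance.
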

\begin{proof}
$(2)$ By a shift, $M_{(a_0, \infty)}$ (resp. $M_{(-\infty, a_0)}$) is order isomorphic to $M_{(0, \infty)}$ (resp. $M_{(-\infty, 0)}$). 
By the mapping $a\mapsto (1-a)^{-1}-1$ (resp. $a\mapsto -a^{-1}+1$), $M_{(0, 1)}$ is order isomorphic to $M_{(0, \infty)}$ (resp. $M_{(-\infty, 0)}$). 

$(3)$ It is easy to see that the mapping $E(pMp)\ni a\mapsto (a_2-a_1)^{1/2}a(a_2-a_1)^{1/2} +a_1$ gives an order isomorphism from $E(pMp)$ onto $M_{[a_1, a_2]}$.

We can prove $(1)$ and $(4)$ similarly and so we omit the proof of them. 
\end{proof}

By this lemma, every operator interval of a von Neumann algebra is order isomorphic to $M_{sa}$, $E(M)$, $M_{(0, \infty)}$, $M_{(-\infty, 0]}$ or $M_{[0, \infty)}$ for some von Neumann algebra $M$. 
Thus, in order to understand the general form of order isomorphisms between two operator intervals, it suffices to consider only operator intervals of these forms. 
Note that $E(M)$ and $M_{(-\infty, 0]}$ (resp. $E(M)$ and  $M_{[0, \infty)}$) have maximum (resp. minimum) elements but the others do not.

\subsection{Locally measurable operators and their invertibility}\label{LS}

Let $M\subset \B(H)$ be a von Neumann algebra. 
A densely-defined closed operator $X\colon \operatorname{dom}(X)\,(\subset H)\,\to H$ is said to be \emph{affiliated with $M$} if $yX\subset Xy$ for every element $y$ in $M'$, the commutant of $M$. 

Let $X$ be an operator affiliated with $M$. 
Put $T:= (X^*X)^{1/2}$. 
We say $X$ is \emph{measurable} if the spectral projection $\chi_{(n, \infty)} (T)\in \P(M)$ is a finite projection in $M$ for some positive integer $n$. 
We say $X$ is \emph{locally measurable} if there exists an increasing sequence $\{p_n\}_{n\geq 1}\subset \P(\Z(M))$ of central projections in $M$ such that $\bigvee_{n\geq 1} p_n = 1$ and $Xp_n$ is measurable in $M$ for any $n\geq 1$. 
We use the notation $\mathcal{S}(M)$ (resp. $\LS(M)$) for the collection of measurable (resp. locally measurable) operators affiliated with $M$. 

The following are known: 
\begin{itemize}
\item If $X, Y\in\mathcal{S}(M)$ (resp. $X, Y\in\LS(M)$), then $X^*$ and the closures of $XY$ and $X+Y$ are in $\mathcal{S}(M)$ (resp. $\LS(M)$). 
\item In that way, $\mathcal{S}(M)$ and $\LS(M)$ can be considered as $^*$-algebras which contain $M$ as a $^*$-subalgebra. 
\end{itemize}
See \cite{Y} for the details. 

In order to prove the main theorem of this paper, we make use of the lemma below. 
The author suspects that this result is already known, but we give a concise proof for completeness. 

\begin{lemma}\label{invertible}
Let $M$ be a von Neumann algebra and $a\in M_{+}$. 
Take the central projections $p_i\in \P(\Z(M))$, $i = I, II, III$, which are determined by the condition that $p_I+p_{II}+p_{III} = 1$ and either $p_i=0$ or $Mp_i$ is of type $i$, $i=I, II, III$. 
Then the following three conditions are equivalent. 
\begin{enumerate}[$(1)$]
\item The operator $a$ is invertible in the algebra $\LS(M)$. 
\item There exists no element $b\in M_+\setminus\{0\}$ with the following property: 
If $x\in M_+$ satisfies $x\leq a$ and $x\leq b$, then $x=0$. 
\item There exists a sequence $\{q_n\}_{n\geq 1}\subset \P(\Z(M))$ of central projections in $M$ such that $\sum_{n\geq 1} q_n = 1$, either $q_n (p_I+p_{III})=0$ or $aq_n (p_I+p_{III})$ is invertible in $Mq_n (p_I+p_{III})$, and either $q_n p_{II}=0$ or $aq_n p_{II}$ is an invertible element in $\mathcal{S}(Mq_n p_{II})$, $n\geq 1$. 
\end{enumerate} 
\end{lemma}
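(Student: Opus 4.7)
The plan is to prove the cycle $(3) \Rightarrow (1) \Rightarrow (2) \Rightarrow (3)$. A preliminary reduction: each of the three conditions forces $s(a) = 1$. For (1), $aa^{-1} = 1$ gives injectivity. For (2), otherwise $b := 1 - s(a) \in M_+\setminus\{0\}$ witnesses failure, since any $x \leq a, x \leq b$ satisfies $s(x) \leq s(a)\wedge(1-s(a)) = 0$. For (3), if $aq_n$ is invertible in the relevant algebra then $s(aq_n) = q_n$, so $s(a) \geq \bigvee_n q_n = 1$. Assume $s(a) = 1$ henceforth.

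For the equivalence $(3) \Leftrightarrow (1)$ I would unpack the definition of $\LS(M)$ via the type decomposition. The direction $(3)\Rightarrow(1)$ patches local inverses: on $q_n(p_I+p_{III})$ the bounded inverse lies in $M\subset\LS(M)$, while on $q_n p_{II}$ the inverse lies in $\mathcal{S}(Mq_np_{II})\subset\LS(M)$; the orthogonal central sum over $n$ assembles $a^{-1}\in\LS(M)$. For $(1)\Rightarrow(3)$, take central $r_k\uparrow 1$ with $a^{-1}r_k\in\mathcal{S}(Mr_k)$. On the type III summand, $\mathcal{S}(Mr_kp_{III}) = Mr_kp_{III}$ since every nonzero projection there is infinite, so $ar_kp_{III}$ is $M$-invertible. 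On the type II summand, $\mathcal{S}$-invertibility is exactly what (3) demands. On the type I summand, $\mathcal{S}(Mr_kp_I)$ may properly exceed $Mr_kp_I$, but a further central refinement of $r_k$ (using the rich lattice of central projections in a type I algebra) yields bounded invertibility on each piece. Setting $q_n := r_n - r_{n-1}$ (with $r_0:=0$) and refining further as needed produces the sequence required by (3).

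For $(1)\Rightarrow(2)$ I would use the parallel sum. For $a, b\in M_+$, the strong limit
\[
a:b := \lim_{\varepsilon\searrow 0} a(a+b+\varepsilon)^{-1}b
\]
defines an element of $M_+$ with $a:b\leq a$ and $a:b\leq b$, and by the extension of the Anderson--Trapp theorem to von Neumann algebras, $a:b = 0$ iff $\operatorname{range}(a^{1/2})\cap\operatorname{range}(b^{1/2}) = \{0\}$. Thus (2) is equivalent to: for every $b\in M_+\setminus\{0\}$, these two ranges meet nontrivially. Under (1), $a^{-1/2}\in\LS(M)$, so $T:=a^{-1/2}b^{1/2}\in\LS(M)$ has dense domain in $H$. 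Pick $\eta\in\operatorname{dom}(T)\setminus\ker b^{1/2}$ (possible since $\operatorname{dom}(T)$ is dense and $\ker b^{1/2}$ is a proper closed subspace); then $\xi:=b^{1/2}\eta$ is a nonzero common vector in $\operatorname{range}(b^{1/2})$ and $\operatorname{dom}(a^{-1/2})=\operatorname{range}(a^{1/2})$, so $a:b\neq 0$ witnesses (2).

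For $(2)\Rightarrow(3)$ I would argue by contrapositive. If (3) fails, there is a nonzero central $z$ on which $az$ admits no local decomposition of the required form, and I would construct $b\in M_+\setminus\{0\}$ with $b=bz$ and $\operatorname{range}(b^{1/2})\cap\operatorname{range}(a^{1/2})=\{0\}$. A vector $\xi\in zH\setminus\operatorname{range}(a^{1/2})$ exists by the failure of local $\mathcal{S}$-invertibility; one takes $p$ to be the support projection in $M$ of the cyclic subspace $[M'\xi]$ and constructs $b$ inside the corner $pMp$ so that $\operatorname{range}(b^{1/2})$ stays disjoint from $\operatorname{range}(a^{1/2})$. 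The main obstacle is this construction, especially on the type III summand where small projections are not available and one cannot use a rank-one projection; there one must rely on cyclic-subspace and polar-decomposition arguments instead.
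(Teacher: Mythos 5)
Your directions $(3)\Rightarrow(1)$ (patching the local inverses $b_n\in\mathcal{S}(Mq_n)$) and $(1)\Rightarrow(2)$ are fine; the latter is even a genuinely different route from the paper, which instead transports the problem by the order isomorphism $x\mapsto a^{-1/2}xa^{-1/2}$ of $M_+$ onto $\{x\in\LS(M)\mid 0\leq x\leq a^{-1}\}$ and takes $\min\{b,1\}$, whereas you invoke the parallel sum and the Fillmore--Williams/Anderson--Trapp range characterization; granting those standard facts (that $a:b\in M_+$, $a:b\leq a$, $a:b\leq b$, and $a:b=0$ iff $\operatorname{ran}(a^{1/2})\cap\operatorname{ran}(b^{1/2})=\{0\}$) your argument closes, at the cost of importing machinery the paper avoids.

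The genuine gap is $(2)\Rightarrow(3)$, which is the heart of the lemma, and your sketch does not prove it. From the failure of $(3)$ you only extract a vector $\xi\in zH\setminus\operatorname{ran}(a^{1/2})$ and propose $b$ supported in the corner of the projection onto $[M'\xi]$; but that projection is useless in general: if $\xi$ is separating for $M$ (the typical situation in type III, and easy to arrange), then $[M'\xi]=H$, the projection is $1$, and $x=a$ itself is a nonzero common minorant, so no contradiction with $(2)$ arises. The whole difficulty is precisely to manufacture, inside $M$, a nonzero positive $b$ admitting no nonzero common minorant with $a$, without rank-one projections; you explicitly flag this as "the main obstacle," i.e.\ it is not done. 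The paper's proof of this implication is where all the work lies: it reduces $(3)$ to showing $\bigwedge_n z(\chi_{[0,1/n)}(a))=0$ on the type I and III parts and $\bigwedge_n z_{\mathrm{infin}}(\chi_{[0,1/n)}(a))=0$ on the type II part; assuming the wedge is nonzero it builds mutually orthogonal equivalent nonzero projections $p_n\leq\chi_{[c_{n+1},c_n)}(a)$ (abelian with full central support in type I, countably decomposable in type III, and via Lemma 2.3 and a trace in type II), dominates $a$ by $\tilde a=\sum_n c_np_n+c_1(1-\sum_n p_n)$, identifies $\sum_n c_np_n$ with a diagonal operator $T\otimes p_1$ in $\B(\ell_2)\mathbin{\overline{\otimes}}p_1Mp_1$, picks a rank-one projection $e$ in the $\B(\ell_2)$ factor with $\operatorname{ran}(e)\not\subset T\ell_2$, and transfers the "no common minorant" property back to $M$ through a jointly faithful family of normal conditional expectations onto $\B(\ell_2)\otimes\mathbb{C}p_1$. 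None of these ingredients (the central-wedge reformulation, the equivalent-projection construction, the tensor identification, the conditional-expectation transfer) appears in your proposal, so the cycle $(3)\Rightarrow(1)\Rightarrow(2)\Rightarrow(3)$ is not closed. A smaller point: your side remark that $(1)\Rightarrow(3)$ follows on the type I part by "a further central refinement" is unproved as stated (it needs an argument that a positive measurable operator over a type I algebra is centrally locally bounded), though it is not needed for your intended cycle.
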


Thus we say $a\in M_+$ is \emph{locally measurably invertible} in $M$ if one of the above equivalent conditions is satisfied. 
For the proof of this lemma, we need an additional lemma. 

\begin{lemma}\label{lem0}
Let $M$ be a von Neumann algebra and $p\in M$ be a finite projection. 
Suppose that an increasing sequence $\{p_n\}_{n\geq1}$ of projections in $M$ satisfies $\bigvee_{n\geq1} p_n \succ p$. 
Then there exists an increasing sequence $\{\tilde{p}_n\}_{n\geq1}$ of projections in $M$ such that $\tilde{p}_n\leq p_n$ and $\bigvee_{n\geq1}\tilde{p}_n \sim p$.
\end{lemma}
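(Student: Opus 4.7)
My plan is to construct $\tilde p_n$ inductively, together with a coherent family of partial isometries $V_n\in M$ satisfying $V_n^*V_n=\tilde p_n$ and $V_nV_n^*=r_n$, where $r_n$ is an increasing sequence of subprojections of a fixed $q\leq\bigvee_{n\geq 1}p_n$ with $q\sim p$. (Such a $q$ exists by the hypothesis $\bigvee p_n\succ p$, and is finite because $p$ is.) Once this is done, it will suffice to prove $r_\infty:=\bigvee_n r_n = q$, for then $\bigvee_n\tilde p_n\sim r_\infty=q\sim p$.

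Setting $V_0=0$ and hence $\tilde p_0=r_0=0$, the inductive step at stage $n$ applies Dixmier's comparability theorem to the ``remaining rooms'' $p_n-\tilde p_{n-1}$ and $q-r_{n-1}$: one gets a central projection $z_n\in\mathcal{Z}(M)$ with $(p_n-\tilde p_{n-1})z_n\prec (q-r_{n-1})z_n$ and $(q-r_{n-1})z_n^{\perp}\prec (p_n-\tilde p_{n-1})z_n^{\perp}$. I would then pick a partial isometry $W_n$ whose initial projection exhausts all of $(p_n-\tilde p_{n-1})z_n$ and whose final projection exhausts all of $(q-r_{n-1})z_n^{\perp}$; in particular $W_n^*W_n\leq p_n-\tilde p_{n-1}$ and $W_nW_n^*\leq q-r_{n-1}$. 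Setting $V_n:=V_{n-1}+W_n$ yields a partial isometry because the initial and final projections of $W_n$ are orthogonal to those of $V_{n-1}$, and we obtain $\tilde p_n:=V_n^*V_n=\tilde p_{n-1}+W_n^*W_n\leq p_n$ and $r_n:=V_nV_n^*=r_{n-1}+W_nW_n^*\leq q$, both monotone increasing in $n$. Taking the SOT limit, $V_\infty$ is a partial isometry with $V_\infty^*V_\infty=\tilde p_\infty:=\bigvee_n\tilde p_n$ and $V_\infty V_\infty^*=r_\infty$.

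To show $r_\infty=q$, I would split along the central projection $Z:=\bigvee_n z_n^{\perp}$. On each $z_n^{\perp}$ the construction immediately gives $r_n z_n^{\perp}=q z_n^{\perp}$, and this propagates by monotonicity together with commutativity of central projections to $r_\infty Z=qZ$. On $Z^{\perp}$, which lies below every $z_n$, the construction instead forces $\tilde p_n Z^{\perp}=p_n Z^{\perp}$ for every $n$, so $\tilde p_\infty Z^{\perp}=PZ^{\perp}$ with $P:=\bigvee_n p_n$. Combining these we get $PZ^{\perp}\sim r_\infty Z^{\perp}\leq qZ^{\perp}\leq PZ^{\perp}$; the Cantor--Bernstein theorem for projections in a von Neumann algebra then gives $qZ^{\perp}\sim PZ^{\perp}\sim r_\infty Z^{\perp}$, and finiteness of $q$ together with $r_\infty Z^{\perp}\leq qZ^{\perp}$ upgrades this equivalence to the equality $r_\infty Z^{\perp}=qZ^{\perp}$.

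The hard part will be this last step on $Z^{\perp}$: the inductive construction by itself only gives $r_\infty\leq q$, and while on the ``saturated'' central part $Z$ the equality $r_\infty=q$ is built in by design, on $Z^{\perp}$ the construction exhausts the $p$-side rather than the $q$-side, so closing the gap $q-r_\infty$ on $Z^{\perp}$ must be done indirectly via Cantor--Bernstein. This is precisely the point at which the finiteness hypothesis on $p$ enters essentially; without it a proper subprojection of $q$ can be equivalent to $q$ and the conclusion can genuinely fail.
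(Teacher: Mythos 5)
Your argument is correct, but it is organized differently from the paper's proof. The paper compares $p_n$ with $p$ directly: it takes the largest central projection $e_n$ with $p_ne_n\succ pe_n$ (these increase), places a copy $q_n$ of the slice $p(e_n-e_{n-1})$ inside $p_n(e_n-e_{n-1})$, and defines $\tilde p_n=\sum_{k\le n}q_k+p_ne^{\perp}$ with $e=\bigvee_ne_n$; on $e^{\perp}$, where $p_ne^{\perp}\prec pe^{\perp}$ for all $n$, it uses finiteness of $pe^{\perp}$ (cancellation) to pack the increments $(\tilde p_n-\tilde p_{n-1})e^{\perp}$ into mutually orthogonal subprojections of $pe^{\perp}$, and then a Schr\"oder--Bernstein argument gives $\bigvee_np_ne^{\perp}\sim pe^{\perp}$. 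You instead fix $q\sim p$ under $\bigvee_np_n$ and greedily match the remainders $p_n-\tilde p_{n-1}$ against $q-r_{n-1}$ via the comparison theorem, postponing the central bookkeeping to $Z=\bigvee_nz_n^{\perp}$, with Schr\"oder--Bernstein applied on $Z^{\perp}$. The toolbox is the same (comparison theorem plus Cantor--Bernstein), but the constructions genuinely differ: the paper's maximal, monotone $e_n$ yield an explicit formula for $\tilde p_n$, while your recursion needs no maximality or monotonicity of the central projections $z_n$. One small correction to your closing remark: finiteness does not enter your argument essentially. The lemma only asks for $\bigvee_n\tilde p_n\sim p$, and you already have $\tilde p_\infty\sim r_\infty$, $r_\infty Z=qZ$, and $r_\infty Z^{\perp}\sim qZ^{\perp}$ from Cantor--Bernstein alone, which gives $\tilde p_\infty\sim q\sim p$; finiteness is needed only for the stronger equality $r_\infty=q$ that you chose as your target, so your route in fact proves the statement without the finiteness hypothesis, whereas the paper's packing step uses it genuinely. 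The steps you leave implicit (assembling $W_n$ from the two halves of the comparison over $z_n$ and $z_n^{\perp}$, strong convergence of $\sum_kW_k$, and $(\bigvee_np_n)z=\bigvee_n(p_nz)$ for central $z$) are standard and correctly indicated.
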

\begin{proof}
For $n\geq 1$, take the maximum central projection $e_n\in \P(\Z(M))$ that  satisfies $p_ne_n \succ pe_n $. 
Then $\{e_n\}_{n\geq 1}$ is an increasing sequence and $p_ne_n^{\perp} \prec pe_n^{\perp}$. 
Put $e:= \bigvee_{n\geq 1}e_n$.
Take a sequence $\{q_n\}_{n\geq1}\subset \P(M)$ such that $pe_1\sim q_1\leq p_1e_1$ and $p(e_n-e_{n-1})\sim q_n\leq p_n(e_n-e_{n-1})$ for $n\geq 2$. 
Put $\tilde{p}_n:= \sum_{k=1}^n q_k + p_ne^{\perp}\,\,(\leq p_n)$. 
Then $\{\tilde{p}_n\}_{n\geq1}$ is an increasing sequence and satisfies $\bigvee_{n\geq 1}\tilde{p}_n e \sim pe$. 
The sequence $\{\tilde{p}_n e^{\perp}\}_{n\geq1}$ is an increasing sequence and satisfies $\tilde{p}_n e^{\perp}\prec pe^{\perp}$, $n\geq 1$. 
Take a projection $\hat{p}_1\in \P(M)$ such that $\tilde{p}_1e^{\perp}\sim \hat{p}_1\leq pe^{\perp}$. 
By finiteness of $pe^{\perp}$, we can take a sequence $\{\hat{p}_n\}_{n\geq2}$ of projections in $M$ such that 
$\{\hat{p}_n\}_{n\geq 1}$ is mutually orthogonal and $(\tilde{p}_n - \tilde{p}_{n-1})e^{\perp} \sim \hat{p}_n\leq pe^{\perp}$, $n\geq 2$. 
Then $\bigvee_{n\geq 1} \tilde{p}_n e^{\perp} \sim \sum_{n\geq 1} \hat{p}_n \leq pe^{\perp}$. 
Since $\bigvee_{n\geq 1}\tilde{p}_n e^{\perp} = \bigvee_{n\geq 1}p_n e^{\perp}\succ pe^{\perp}$, we have $\bigvee_{n\geq 1}\tilde{p}_n e^{\perp}\sim pe^{\perp}$, and thus $\bigvee_{n\geq1}\tilde{p}_n \sim p$.
\end{proof}

\begin{proof}[Proof of Lemma \ref{invertible}]
$(3)\Rightarrow(1)$ We can take $b_n \in \mathcal{S}(Mq_n)$ such that $b_naq_n=q_n$ for each $n\geq 1$. 
Then the sum $\sum_{n\geq 1} b_n$ is the inverse of $a$ in $\LS(M)$. 

$(1)\Rightarrow(2)$ 
Take the inverse $a^{-1}\in \LS(M)$ of $a$ and its positive square root $a^{-1/2}\in\LS(M)$. 
The mapping $x\mapsto a^{-1/2}xa^{-1/2}$ is an order isomorphism from $M_+$ onto $\{x\in \LS(M)\mid 0\leq x\leq a^{-1}\}$ and $a$ is mapped to $1$. 
We define a function $f\colon [0, \infty)\to \mathbb{R}$ by $f(t):= \min\{t, 1\}$, $t\in [0, \infty)$. 
For every $0\neq b \in \LS(M)$ with $0\leq b\leq a^{-1}$, the element $(0\neq)\,\, x:=f(b) \in M_+$ satisfies both $x\leq b$ and $x\leq 1$. 
Thus the condition $(2)$ holds. 

$(2)\Rightarrow(3)$ 
If we decompose $M$ into a direct sum, it suffices to consider each direct summand. 
First we consider the cases of type I or III. 
It suffices to show that $\bigwedge_{n\geq1} z(\chi_{[0, 1/n)}(a)) = 0$, where $z(p)\in \P(\Z(M))$ means the central support of $p$ for a projection $p\in \P(M)$. 
Assume $r:= \bigwedge_{n\geq1} z(\chi_{[0, 1/n)}(a))\neq 0$. 
Considering the pair $(Mr, ar)$ instead of $(M, a)$, we may assume $\bigwedge_{n\geq1} z(\chi_{[0, 1/n)}(a))=1$. 
Take a normal (tracial) state $\tau$ on $\Z(M)$. 
We may also assume $\operatorname{supp}(\tau)=1 \in \P(\Z(M))$. 
By our assumptions, we may take a strictly decreasing sequence $\{c_n\}_{n\geq1}$ of positive real numbers that satisfies $c_1>\lVert a \rVert$, $c_n\to 0$ ($n\to0$) and $\tau(z(\chi_{[c_{n+1}, c_n)}(a))) \geq 1-3^{-n}$, $n\geq 1$. 
Then $\tau(\bigwedge_{n\geq1} z(\chi_{[c_{n+1}, c_n)}(a))) \geq 1-\sum_{n\geq 1}3^{-n}>0$ and thus $\bigwedge_{n\geq1} z(\chi_{[c_{n+1}, c_n)}(a)) \neq 0$. 
We may assume $\bigwedge_{n\geq1} z(\chi_{[c_{n+1}, c_n)}(a))=1$. 

If $M$ is of type I, we can take an abelian projection $p_n\leq \chi_{[c_{n+1}, c_n)}(a)$ with $z(p_n)=1$ for each $n\geq 1$. 
If $M$ is of type III, by the assumption that $\Z(M)$ has a normal faithful state,  we can take a countably decomposable projection $p_n\leq \chi_{[c_{n+1}, c_n)}(a)$ with $z(p_n)=1$ for each $n\geq 1$. 
In both cases, $\{p_n\}_{n\geq1}$ is a family of mutually orthogonal equivalent nonzero projections. 
We consider the operator $\tilde{a} := \sum_{n\geq 1} c_n p_n + c_1 (1-\sum_{n\geq 1} p_n)$. 
We have $a \leq \sum_{n\geq 1} c_n \chi_{[c_{n+1}, c_n)}(a) \leq \tilde{a}$, so $\tilde{a}$ also satisfies the condition $(2)$. 
We can identify $\sum_{n\geq 1} c_n p_n \in (\sum_{n\geq 1} p_n) M  (\sum_{n\geq 1} p_n)$ with $(\sum_{n\geq 1} c_n e_n) \otimes p_1 \in \B(\ell_2)\mathbin{\overline{\otimes}}p_1 M p_1$, where $e_n$ is the projection onto $n$-th coordinate of $\ell_2 =\ell_2(\mathbb{N})$, $\mathbb{N} = \{1, 2,\ldots\}$. 
Put $T:= \sum_{n\geq 1} c_n e_n\in \B(\ell_2)$. 
Since $c_n\to0$ as $n\to\infty$, the positive operator $T\in \B(\ell_2)$ is not invertible. 
Thus there exists a vector $\xi\in \ell_2\setminus T\ell_2$. 
Take the projection $e\in \B(\ell_2)$ whose range is $\mathbb{C}\xi$. 
It is easy to see that, if $x\in \B(\ell_2)_+$ satisfies $x\leq T$ and $x\leq e$, then $x=0$. 
Take the projection $p\in (\sum_{n\geq 1} p_n) M  (\sum_{n\geq 1} p_n)$ that corresponds to $e\otimes p_1\in \B(\ell_2)\mathbin{\overline{\otimes}}p_1 M p_1$. 
By \cite[Proposition 11.2.24]{KR}, 
there exists a family $\{\Phi_i\colon \B(\ell_2)\mathbin{\overline{\otimes}}p_1 Mp_1 \to \B(\ell_2) \otimes \mathbb{C}p_1\}_{i\in I}$ of normal conditional expectations with the property that 
if $x\in (\B(\ell_2)\mathbin{\overline{\otimes}}p_1 Mp_1)_+$ satisfies $\Phi_i(x)=0$ for every $i\in I$, then $x=0$. 
Suppose that $x\in M_+$ satisfies both $x\leq \tilde{a}$ and $x\leq p$. 
It follows that $x\in (\sum_{n\geq 1} p_n) M  (\sum_{n\geq 1} p_n)$ and 
$\Phi_i(x) \leq T\otimes p_1$, $\Phi_i(x) \leq e\otimes p_1$ in $ \B(\ell_2) \otimes \mathbb{C}p_1$ for every $i\in I$. 
Thus we have $\Phi_i(x) = 0$ for every $i\in I$ and hence $x=0$. 
We obtain a contradiction. 

Next we consider the case where $M$ is of type II. 
For a projection $p\in \P(M)$, we take the central projection $z_{\mathrm{infin}}(p)\in\P(\Z(M))$ that is defined as the maximum projection in $\{z\in \P(\Z(M))\mid pz \text{ is properly infinite},\,\, z\leq z(p)\}$. 
It suffices to show that $\bigwedge_{n\geq1} z_{\mathrm{infin}}(\chi_{[0, 1/n)}(a)) = 0$. 
Hence we assume that $\bigwedge_{n\geq1} z_{\mathrm{infin}}(\chi_{[0, 1/n)}(a))=1$. 
Take a normal semifinite faithful tracial weight $\tau$ on $M$ with $\tau(1)\geq 1$ and a (finite) projection $p\in M$ with $\tau(p)=1$. 
It follows that $\chi_{[0, 1/n)}(a) \succ p$ for every $n\geq 1$.
By Lemma \ref{lem0}, there exist a strictly decreasing sequence $\{c_n\}_{n\geq 1}$ of positive real numbers and a sequence $\{p_n\}_{n\geq 1}$ of projections in $M$ such that $c_1> \lVert a\rVert$, $c_n\to 0\,\,(n\to \infty)$, $p_n\leq \chi_{[c_{n+1}, c_n)}(a)$, $p_n\prec p$ and $\tau(p_n)\geq 1-3^{-n}$, $n\geq 1$. 
Take a projection $\tilde{p}_n\in \P(M)$ such that $p_n\sim \tilde{p}_n \leq p$, $n\geq 1$. 
Put $\tilde{p} := \bigwedge_{n\geq 1} \tilde{p}_n$. 
Then $\tau(\tilde{p}) = \tau(\bigwedge_{n\geq 1} \tilde{p}_n) \geq 1-\sum_{n\geq 1} 3^{-n}>0$. 
Hence $\tilde{p} \neq 0$.
Take a projection $\hat{p}_n\in \P(M)$ such that $\hat{p}_n\leq p_n$ and $\hat{p}_n\sim\tilde{p}$, for $n\geq 1$. 
Then $\{\hat{p}_n\}_{n\geq1}$ is a family of mutually orthogonal equivalent nonzero projections. 
Put $\tilde{a} := \sum_{n\geq 1} c_n \hat{p}_n + c_1 (1-\sum_{n\geq 1} \hat{p}_n)\,\, (\geq a)$. 
By a discussion similar to that in the preceding paragraph, we can obtain a contradiction. 
\end{proof}

\section{Order isomorphisms between effect algebras}\label{effect}
In this section, we give a complete description of order isomorphisms between two effect algebras in von Neumann algebras. 
We generalize some of the arguments on $\B(H)$ by \v{S}emrl in \cite{S17}. 

Let $M$ be a von Neumann algebra. 
For $\alpha\in \mathbb{R}$ with $\alpha<1$, we define a continuous monotone increasing bijective function $f_{\alpha}\colon [0, 1]\to [0, 1]$ by 
\[
f_{\alpha}(t):= \frac{t}{\alpha t+1-\alpha},\,\, t\in [0, 1]. 
\]
Then the mapping $a\mapsto f_{\alpha}(a)$, $a\in E(M)$, is an order automorphism on $E(M)$. 
Indeed, $f_0$ is the identity mapping. 
If $\alpha \neq 0$, then we have 
\[
f_\alpha(a) = \frac{1}{\alpha} - \frac{1-\alpha}{\alpha^2} \left(a + \frac{1-\alpha}{\alpha}\right)^{-1} 
\]
for any $a\in E(M)$. 
Thus, for $a_1, a_2\in E(M)$, we have
\[
\begin{split}
a_1\leq a_2 &\Longleftrightarrow \left(a_1 + \frac{1-\alpha}{\alpha}\right)^{-1} \geq \left(a_2 + \frac{1-\alpha}{\alpha}\right)^{-1}\\ 
&\Longleftrightarrow f_{\alpha}(a_1)\leq f_{\alpha}(a_2). 
\end{split}
\]
We also remark that, if $0<\alpha<1$, then we can extend the domain of $f_{\alpha}$ to $\mathbb{R}_+$ by the same formula, and the mapping $a\mapsto f_{\alpha}(a)$, $a\in M_+$ preserves the order. We use this fact in Proposition \ref{pq}.
 
It is easy to see the following:
\begin{itemize}
\item If $\alpha\neq 0$, then for $a\in E(M)$, we have $a\in\P(M)$ if and only if $f_{\alpha}(a)=a$. 
\item If $0\leq \alpha<1$, then we have $f_{\alpha}(a)\geq a$ for any $a\in E(M)$. 
\item If $\alpha\leq0$, then we have $f_{\alpha}(a)\leq a$ for any $a\in E(M)$. 
\end{itemize} 

The lemma below gives us a light on a strategy to attack on general von Neumann algebras. 

\begin{lemma}\label{lem-proj}
Let $M$ be a von Neumann algebra. 
For a pair $(a, b)\in E(M)\times E(M)$ we consider the following condition: 
\begin{itemize}
\item[$(*)$] If $x\in E(M)$ satisfies $x\leq a$ and $x\leq b$, then $x=0$. 
Moreover, if $x\in E(M)$ satisfies $x\geq a$ and $x\geq b$, then $x=1$.
\end{itemize}
For $a\in E(M)$, the following conditions are equivalent: 
\begin{enumerate}[$(1)$]
\item $a\in\P(M)$.  
\item There exists an element $b\in E(M)$ with the following properties: 
\begin{itemize}
\item The pair $(a, b)$ satisfies the condition $(*)$. 
\item Suppose $(a_0, b_0)\in E(M)\times E(M)$ satisfies $(*)$. 
If  $a\leq a_0$ and $b\leq b_0$, then $(a, b)=(a_0, b_0)$. 
\end{itemize}
\end{enumerate}
\end{lemma}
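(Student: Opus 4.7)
The whole argument hinges on the order automorphisms $f_\alpha$ introduced above. For $\alpha\in(0,1)$, $f_\alpha$ is an order automorphism of $E(M)$ satisfying $f_\alpha(x)\geq x$ for every $x\in E(M)$ (because $\alpha t+1-\alpha\leq 1$ on $[0,1]$), and its fixed-point set in $E(M)$ is precisely $\P(M)$.

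For the implication $(1)\Rightarrow(2)$ I would take $b=1-a$. If an effect $x$ is below both $a$ and $1-a$, then $(1-a)x(1-a)\leq(1-a)a(1-a)=0$ and $axa\leq a(1-a)a=0$, which give $x(1-a)=xa=0$ and hence $x=0$; the dual ``common upper bound is $1$'' half of $(*)$ is the same statement applied to $1-y$. For maximality, suppose $a\leq a_0$ and $1-a\leq b_0$ with $(a_0,b_0)$ satisfying $(*)$. Then $a_0-a$ is a positive element lying below $a_0$ and below $1-a\leq b_0$, so $(*)$ forces $a_0=a$; the symmetric argument yields $b_0=1-a$.

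For the implication $(2)\Rightarrow(1)$ let $b$ be as in (2) and fix any $\alpha\in(0,1)$. Because $f_\alpha$ is an order automorphism of $E(M)$, the pair $(f_\alpha(a),f_\alpha(b))$ again satisfies $(*)$; because $f_\alpha$ dominates the identity, this new pair dominates $(a,b)$ componentwise. Maximality in (2) therefore forces $f_\alpha(a)=a$, and since $\alpha\neq 0$ this equality is exactly the condition $a\in\P(M)$.

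The only non-mechanical step is recognising that $f_\alpha$ is the right tool to reach for: it simultaneously preserves $(*)$ and strictly enlarges every non-projection, so that maximality alone is enough to conclude. No spectral theory, type decomposition, or Dye-type lattice result is needed at this stage.
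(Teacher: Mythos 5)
Your proof is correct and follows essentially the same route as the paper: take $b=1-a$ with a direct corner/positivity computation for $(1)\Rightarrow(2)$, and use the order automorphism $f_\alpha$ with $0<\alpha<1$ (which dominates the identity on $E(M)$, preserves $(*)$, and fixes exactly the projections) for $(2)\Rightarrow(1)$. The only cosmetic difference is in the maximality step, where you test $(*)$ against the effect $a_0-a$ directly instead of the paper's $p^{\perp}a_0p^{\perp}$; both arguments are valid.
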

\begin{proof}
$(1)\Rightarrow (2)$ Let $a=p\in\P(M)$. 
Put $b:=p^{\perp}=1-p$. 
If $x\in E(M)$ satisfies $x\leq p$, then $0\leq p^{\perp}xp^{\perp}\leq p^{\perp}pp^{\perp} =0$ and thus $p^{\perp}xp^{\perp}=0$. 
If in addition $x\leq p^{\perp}$, then we also have $pxp = 0$. 
By positivity of $x$, we have $x=0$. 
Similarly, if $x\in E(M)$ satisfies both $x\geq p$ and $x\geq p^{\perp}$, then $x=1$. 
Thus the pair $(a, b)=(p, p^{\perp})$ satisfies the condition $(*)$. 

Suppose that $(a_0, b_0)\in E(M)\times E(M)$ satisfies $a\leq a_0$, $b\leq b_0$ and $a\neq a_0$. 
Since $a=p$ is a projection and $a_0\leq 1$, we have $a_0 = p + p^{\perp}a_0p^{\perp}$ and $p^{\perp}a_0p^{\perp}\neq 0$. 
We have $p^{\perp}a_0p^{\perp}\leq a_0$ and $p^{\perp}a_0p^{\perp}\leq p^{\perp}=b\leq b_0$. 
Hence the pair $(a_0, b_0)$ does not satisfy the condition $(*)$. 
Similar arguments complete the proof of $(1)\Rightarrow (2)$. 

$(2)\Rightarrow (1)$ Suppose that the pair $(a, b)$ satisfies the condition in $(2)$. 
Fix a positive real number $0<\alpha<1$. 
Since $f_{\alpha}\colon E(M)\to E(M)$ is an order isomorphism, we easily see that the pair $(f_{\alpha}(a), f_{\alpha}(b))$ also satisfies $(*)$. 
We also have $a\leq f_{\alpha}(a)$ and $b\leq f_{\alpha}(b)$. 
It follows $(a, b)=(f_{\alpha}(a), f_{\alpha}(b))$. 
Since $\alpha\neq 0$, we obtain $a, b\in \P(M)$.
\end{proof}

\begin{corollary}
Let $M$ and $N$ be von Neumann algebras. 
Suppose that $\phi\colon E(M)\to E(N)$ is an order isomorphism. 
Then $\phi(\P(M))=\P(N)$. 
\end{corollary}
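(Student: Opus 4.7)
The plan is to read off this corollary directly from Lemma \ref{lem-proj}, whose whole point is to give a purely order-theoretic characterization of projections inside $E(M)$.

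First I would observe that any order isomorphism $\phi\colon E(M)\to E(N)$ automatically preserves the minimum and maximum, so $\phi(0)=0$ and $\phi(1)=1$. Consequently the condition $(*)$ on a pair $(a,b)\in E(M)\times E(M)$ from Lemma \ref{lem-proj} --- which refers only to the order and to the elements $0$ and $1$ --- is preserved by $\phi$ in both directions: $(a,b)$ satisfies $(*)$ in $E(M)\times E(M)$ if and only if $(\phi(a),\phi(b))$ satisfies $(*)$ in $E(N)\times E(N)$.

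Next I would note that the maximality clause in condition (2) of Lemma \ref{lem-proj} is likewise purely order-theoretic: it asserts that no pair $(a_0,b_0)$ satisfying $(*)$ dominates $(a,b)$ coordinate-wise except $(a,b)$ itself. Since $\phi\times\phi$ is an order isomorphism of $E(M)\times E(M)$ onto $E(N)\times E(N)$ that preserves $(*)$, it also preserves this maximality property. Therefore an element $a\in E(M)$ satisfies the characterization in Lemma \ref{lem-proj}(2) if and only if $\phi(a)\in E(N)$ does.

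Combining this equivalence with Lemma \ref{lem-proj} applied to both $M$ and $N$ yields $a\in\P(M)\iff \phi(a)\in\P(N)$, so $\phi(\P(M))=\P(N)$. There is no real obstacle here; the work has already been done in Lemma \ref{lem-proj}, and the corollary is just the transfer of that characterization along $\phi$.
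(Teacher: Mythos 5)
Your proposal is correct and follows exactly the paper's route: the paper also deduces the corollary by noting that condition (2) of Lemma \ref{lem-proj} is purely order-theoretic (hence preserved by $\phi$, which fixes the extreme elements $0$ and $1$), so the projection characterization transfers along $\phi$. Your write-up simply spells out the details the paper leaves as "easy to see."
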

\begin{proof}
By the preceding lemma, for $a\in E(M)$, we have $a\in \P(M)$ if and only if the condition $(2)$ holds. 
It is easy to see that the condition $(2)$ is preserved by an order isomorphism.
\end{proof}

We will use the following four lemmas. 
\begin{lemma}\label{lem1}
Let $M$ be a von Neumann algebra. 
Suppose that $p\in \P(M)$ and $a\in E(M)$ satisfy $pa=0$. 
Then $\{b\in E(M)\mid b\geq p,\,\,b\geq a\}$ admits $p+a$ as its minimum element. 
\end{lemma}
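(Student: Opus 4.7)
The plan is to proceed in three short steps: verify that $p+a$ lies in $E(M)$ and is an upper bound, and then show it is the smallest such.

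First I would observe that $pa=0$ implies $ap = (pa)^{*}=0$ since $p$ and $a$ are self-adjoint, so $a = p^{\perp}ap^{\perp}\leq p^{\perp}$, and hence $p+a\leq 1$. Thus $p+a\in E(M)$, and obviously $p+a\geq p$ and $p+a\geq a$.

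The key step is the following structural observation: if $b\in E(M)$ satisfies $b\geq p$, then in fact $bp = pb = p$. To see this, compress by $p$: from $p\leq b\leq 1$ we get $p = pbp$, so $p(1-b)p = 0$. Writing $1-b = c^{*}c$ with $c\in M$ (possible since $1-b\geq 0$), we find $\lVert cp\rVert^{2} = \lVert pc^{*}cp\rVert = 0$, whence $cp = 0$ and therefore $(1-b)p = c^{*}cp = 0$, i.e.\ $bp=p$. Taking adjoints gives $pb=p$ as well. Consequently $b = p + p^{\perp}bp^{\perp}$.

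Finally, using this decomposition and $a = p^{\perp}ap^{\perp}$, I compute
\[
b - (p+a) = p^{\perp}bp^{\perp} - a = p^{\perp}(b-a)p^{\perp}\geq 0,
\]
since $b\geq a$. Therefore $b\geq p+a$, which shows that $p+a$ is the minimum of $\{b\in E(M)\mid b\geq p,\ b\geq a\}$.

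The argument is essentially routine; the one place worth highlighting is the implication $b\geq p\Rightarrow bp=p$, which is the standard fact that an effect dominating a projection must act as the identity on its range. No deeper obstacle arises.
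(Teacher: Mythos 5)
Your proof is correct and follows essentially the same route as the paper: compress by $p$ to get $pbp=p$, deduce $pb=bp=p$ (the paper uses $\lVert b\rVert\leq 1$ where you factor $1-b=c^*c$, but this is the same standard fact), and then compare $p^{\perp}bp^{\perp}$ with $a=p^{\perp}ap^{\perp}$. The extra check that $p+a\in E(M)$ is a harmless addition the paper leaves implicit.
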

\begin{proof}
Let $x\in E(M)$ satisfy $x\geq p$ and $x\geq a$. 
It follows $p\leq pxp \leq p1p = p$, hence $pxp=p$. 
Since $\lVert x\rVert \leq 1$, we have $px=xp=p$. 
In addition, we have $p^{\perp}xp^{\perp}\geq p^{\perp}ap^{\perp} = a$. 
Thus $x= pxp+p^{\perp}xp^{\perp}\geq p+a$. 
\end{proof}

\begin{lemma}\label{lem2}
Let $M$ and $N$ be two von Neumann algebras and $\phi\colon E(M)\to E(N)$ be an order isomorphism. 
Suppose that $p\in \P(\Z(M))$ and $q\in \P(\Z(N))$ are central projections satisfying $\phi(p)=q$ and $\phi(p^{\perp})=q^{\perp}$. 
Then there exist two order isomorphisms $\phi_1\colon E(Mp)\to E(Nq)$ and $\phi_2\colon E(Mp^{\perp})\to E(Nq^{\perp})$ that satisfy 
\[
\phi(a) = \phi_1(ap) + \phi_2(ap^{\perp}),\quad a\in E(M). 
\]
\end{lemma}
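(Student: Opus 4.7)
The plan is to show that $\phi$ respects the decomposition of $E(M)$ induced by the central projection $p$, by means of a simple order-theoretic characterization of the ``cut-off'' $ap$.

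First I would identify $E(Mp)$ with the subset $\{a \in E(M) : a \leq p\} \subseteq E(M)$, and analogously $E(Nq)$ with $\{y \in E(N) : y \leq q\}$. This identification is valid because for any projection $p$, an element $x \in M_{sa}$ with $0 \leq x \leq p$ automatically satisfies $x = pxp$, and this lies in $pMp = Mp$ by centrality of $p$. Under this identification I would set $\phi_1 := \phi|_{E(Mp)}$; since $\phi$ is an order isomorphism with $\phi(p) = q$, the set $\{a \in E(M) : a \leq p\}$ is mapped bijectively onto $\{y \in E(N) : y \leq q\}$, so $\phi_1\colon E(Mp) \to E(Nq)$ is automatically an order isomorphism. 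I would define $\phi_2\colon E(Mp^\perp) \to E(Nq^\perp)$ analogously using $\phi(p^\perp) = q^\perp$.

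The crux of the argument is then the identity $\phi(ap) = \phi(a)q$ for each $a \in E(M)$. My plan is to characterize $ap$ purely in order-theoretic terms as the maximum element of $S(a, p) := \{x \in E(M) : x \leq a \text{ and } x \leq p\}$. Clearly $ap \in S(a, p)$; conversely, if $x \in S(a, p)$ then $x \leq p$ forces $x = pxp$, and then centrality of $p$ gives $x = xp \leq ap$. Since $\phi$ is an order isomorphism sending $a \mapsto \phi(a)$ and $p \mapsto q$, the image $\phi(ap)$ must be the maximum of $\{y \in E(N) : y \leq \phi(a) \text{ and } y \leq q\}$, and the very same argument performed inside $N$ identifies that maximum as $\phi(a)q$. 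Hence $\phi(ap) = \phi(a)q$, and symmetrically $\phi(ap^\perp) = \phi(a)q^\perp$.

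Combining these identities yields
\[
\phi_1(ap) + \phi_2(ap^\perp) = \phi(ap) + \phi(ap^\perp) = \phi(a)q + \phi(a)q^\perp = \phi(a),
\]
which is the desired decomposition. I expect the main obstacle to be nothing more than formulating the order-theoretic characterization of $ap$ cleanly; once that is in place the rest is bookkeeping. As an alternative, Lemma \ref{lem1} could provide a dual route through minima of $\{b \in E(M) : b \geq p^\perp \text{ and } b \geq ap\}$, but the maximum characterization appears to be more direct.
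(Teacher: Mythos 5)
Your proof is correct and follows essentially the same order-theoretic strategy as the paper: both define $\phi_1,\phi_2$ by restricting $\phi$ (using $\phi(p)=q$, $\phi(p^{\perp})=q^{\perp}$) and exploit centrality together with the fact that an order isomorphism preserves order-defined extrema. The only difference is that you characterize $ap$ as the maximum of $\{x\in E(M)\mid x\le a,\ x\le p\}$, yielding $\phi(ap)=\phi(a)q$ and its analogue for $p^{\perp}$, whereas the paper dually characterizes $a$ as the minimum of $\{x\in E(M)\mid x\ge ap,\ x\ge ap^{\perp}\}$ and concludes directly that $\phi(a)=\phi_1(ap)+\phi_2(ap^{\perp})$.
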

\begin{proof}
By the three conditions $\phi(p)=q$, $\phi(p^{\perp})=q^{\perp}$ and $\phi(0)=0$, we can construct order isomorphisms $\phi_1\colon E(Mp)\to E(Nq)$ and $\phi_2\colon E(Mp^{\perp})\to E(Nq^{\perp})$ by $\phi_1(x)=\phi(x)$, $x\in E(Mp)\,(\subset E(M))$ and $\phi_2(x)=\phi(x)$, $x\in E(Mp^{\perp})\,(\subset E(M))$. 
Let $a\in E(M)$. 
Then the assumption that $p$ is a central projection assures that $a$ is the minimum element in $\{x\in E(M)\mid x\geq ap,\,\, x\geq ap^{\perp}\}$. 
Since $\phi$ is an order isomorphism, $\phi(a)$ is the minimum element in $\{x\in E(N)\mid x\geq \phi(ap)\, (=\phi_1(ap)),\,\,x\geq \phi(ap^{\perp})\, (=\phi_2(ap^{\perp}))\}$. 
By the assumption that $q$ is central, we have $\phi(a) = \phi_1(ap) + \phi_2(ap^{\perp})$. 
\end{proof}

\begin{lemma}\label{lem3}
Let $M\subset \B(H)$ be a von Neumann algebra. 
Suppose that $p, q\in \P(M)$ satisfy $p\wedge q= 0$, $p\vee q=1$ and the following property: 
If $x\in E(M)$ satisfies $x\geq p$ and $x\geq q/2$, then $x\geq 1/2$. 
Then we have $pq = 0$. 
\end{lemma}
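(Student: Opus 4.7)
The plan is to proceed by contraposition: assuming $pq\neq 0$, I will exhibit a specific $x\in E(M)$ that satisfies $x\geq p$ and $x\geq q/2$ but $x\not\geq 1/2$. Set $T:=p^{\perp}qp^{\perp}\in p^{\perp}Mp^{\perp}$, which satisfies $0\leq T\leq p^{\perp}$, and let $x:=p+f(T)$, where $f(t)=t/(1+t)$ is applied by the continuous functional calculus in $p^{\perp}Mp^{\perp}$ (explicitly, $f(T)=T(p^{\perp}+T)^{-1}$). Since $f$ maps $[0,1]$ into $[0,1/2]$, one has $0\leq f(T)\leq p^{\perp}/2$, so $x\in E(M)$, and the inequality $x\geq p$ is clear.

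The main step is to verify $x\geq q/2$. I work in the $2\times 2$ block decomposition $H=pH\oplus p^{\perp}H$ and write $q=\bigl(\begin{smallmatrix}A&B\\B^*&D\end{smallmatrix}\bigr)$, so $D=T$. The identity $q^2=q$ yields $BB^*=A(1-A)$, $B^*B=D(1-D)$, and $AB+BD=B$. Taking the polar decomposition $B=U|B|$ with $|B|=[D(1-D)]^{1/2}$ and $r=U^*U$ the support projection, the third identity together with the commutativity of $|B|$ with $D$ yields $AU=U(1-D)$, hence $(2-A)U=U(1+D)$; since $2-A\geq 1$ is invertible, one deduces $(2-A)^{-1}U=U(1+D)^{-1}$. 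Then
\[
2x-q \;=\; \begin{pmatrix} 2-A & -B \\ -B^* & D(1-D)/(1+D) \end{pmatrix},
\]
and by the Schur complement criterion (valid since $2-A$ is invertible) its positivity reduces to checking $B^*(2-A)^{-1}B\leq D(1-D)/(1+D)$. A short calculation using the intertwining $(2-A)^{-1}U=U(1+D)^{-1}$ shows that the two sides are in fact equal, so the Schur complement vanishes and $2x\geq q$. I expect this Schur-complement computation, which hinges on the intertwining relation derived from $q^2=q$, to be the main obstacle.

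Finally I check that $x\not\geq 1/2$. Compressing by $p^{\perp}$ gives $p^{\perp}(x-1/2)p^{\perp}=f(T)-p^{\perp}/2$, and since $t/(1+t)\leq 1/2$ with equality only at $t=1$, the inequality $x\geq 1/2$ would force $T=p^{\perp}$, equivalently $p^{\perp}q^{\perp}p^{\perp}=0$, equivalently $p^{\perp}\leq q$. But then $q-p^{\perp}$ is a projection dominated by both $p$ and $q$, hence zero by $p\wedge q=0$, so $q=p^{\perp}$ and $pq=0$, contradicting the assumption. Thus $x\not\geq 1/2$, violating the hypothesis of the lemma, and we conclude $pq=0$.
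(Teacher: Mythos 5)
Your proof is correct, and it is built around exactly the same test operator as the paper's: $x=p+f(p^{\perp}qp^{\perp})$ with $f(t)=t/(1+t)$, used to show $x\geq p$, $x\geq q/2$, and then to exploit the hypothesis. Where you differ is in the machinery. The paper invokes Halmos's two projection theorem to put $p$ and $q$ in canonical form on $H_1\oplus H_2\oplus K_1\oplus K_2$ (the hypotheses $p\wedge q=0$, $p\vee q=1$ kill the other summands), verifies $2x-q\geq 0$ by an explicit factorization of the resulting matrices, and reads off $K_1=K_2=0$ from $f(b^2)\geq 1/2$ using injectivity of the generic-part operators. You instead work with the plain $2\times 2$ decomposition relative to $p$, extract the relations $BB^*=A(1-A)$, $B^*B=D(1-D)$, $AB+BD=B$ from $q^2=q$, and reduce $2x-q\geq 0$ to a Schur-complement identity via the intertwining relation $(2-A)^{-1}U=U(1+D)^{-1}$ (or, even more directly, $(2-A)^{-1}B=B(1+D)^{-1}$, which avoids any discussion of how $U$ acts on $\ker\lvert B\rvert$); your endgame then uses $p\wedge q=0$ directly, via $p^{\perp}\leq q\Rightarrow q-p^{\perp}\leq p\wedge q=0$. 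Your route is more self-contained (no appeal to Halmos, and in fact you never use $p\vee q=1$), at the cost of the polar-decomposition/Schur bookkeeping; the paper's route makes the positivity check a one-line factorization once the canonical form is in place. The only points worth tightening are the standard ones you implicitly rely on: that $1-D$ and $(1+D)^{-1}$ commute with $\lvert B\rvert$ and preserve its kernel/closed range (so the intertwining holds as an operator identity), and the operator Schur-complement criterion with $2-A\geq 1$ invertible. These are routine, so the argument stands.
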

\begin{proof}
By Halmos's two projection theorem \cite{H}, we may assume the following: 
We have a decomposition $H= H_1\oplus H_2\oplus K_1\oplus K_2$ of the Hilbert space $H$ with an identification $K_1=K_2$ and positive injective operators $a, b\in \B(K_1)$ with $a^2+b^2=1\in \B(K_1)$ such that $p$ and $q$ are identified with:
\[
p=\begin{pmatrix}1&0&0&0\\0&0&0&0\\0&0&1&0\\0&0&0&0\end{pmatrix},\quad
q=\begin{pmatrix}0&0&0&0\\0&1&0&0\\0&0&a^2&ab\\0&0&ab&b^2\end{pmatrix}. 
\]
Note that $pq=0 \Longleftrightarrow K_1=0=K_2$. 
Consider the operator $x_0:=p + f(p^{\perp}qp^{\perp}) \in E(M)$, where $f$ is a mapping determined by $f(t):= t/(1+t)$, $t\in [0, 1]$. Then $x_0\geq p$. 
We have 
\[
x_0=\begin{pmatrix}1&0&0&0\\0&1/2&0&0\\0&0&1&0\\0&0&0&f(b^2)\end{pmatrix} 
\]
and hence
\[
\begin{split}
2x_0-q &= \begin{pmatrix}2&0&0&0\\0&0&0&0\\0&0&2-a^2&-ab\\0&0&-ab&2f(b^2)-b^2\end{pmatrix}\\
&=\begin{pmatrix}2&0&0&0\\0&0&0&0\\0&0&0&0\\0&0&0&0\end{pmatrix}+
\begin{pmatrix}0&0&0&0\\0&0&0&0\\0&0&b^2+1&-b(1-b^2)^{1/2}\\0&0&-b(1-b^2)^{1/2}&2f(b^2)-b^2\end{pmatrix}.
\end{split}
\] 
In addition, we have $2f(b^2)-b^2\geq 0$ and
\[
\begin{split}
\B(K_1\oplus K_2)&\ni 
\begin{pmatrix}b^2+1&-b(1-b^2)^{1/2}\\-b(1-b^2)^{1/2}&2f(b^2)-b^2\end{pmatrix}\\
&=\begin{pmatrix}(b^2+1)^{1/2}&0\\-(2f(b^2)-b^2)^{1/2}&0\end{pmatrix}
{\begin{pmatrix}(b^2+1)^{1/2}&0\\-(2f(b^2)-b^2)^{1/2}&0\end{pmatrix}}^{*}\geq 0, 
\end{split}
\]
thus $x_0\geq q/2$. 
By the assumption, we obtain $x_0\geq 1/2$, which means $f(b^2)\geq 1/2$ and thus $K_1=0=K_2$. 
\end{proof}

\begin{lemma}\label{lem4}
Let $M$ be a von Neumann algebra. 
Suppose that a pair $(a, b)\in E(M)\times E(M)$ satisfies $a^2+b^2 = 1$. 
(Then $a$ and $b$ commute.) 
Suppose further that $\lambda\in \mathbb{R}$ satisfies $\lambda>1$ and $x\in M_+$, $u\in \U(\Z(M))$. 
We consider $M_2(M)$ (two by two matrices with entries in $M$). 
Then the condition 
\[
\lambda\begin{pmatrix}x&0\\0&1\end{pmatrix} \geq 
\begin{pmatrix}a^2&abu\\abu^*&b^2\end{pmatrix}\quad \text{in }M_2(M)
\]
is equivalent to $\lambda x \geq a^2b^2(\lambda-b^2)^{-1}+a^2$ in $M$. 
\end{lemma}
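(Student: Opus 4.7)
The plan is to reduce the $M_2(M)$-positivity condition to a scalar-like condition in $M$ via a Schur-complement computation, exploiting that $a$, $b$, and $u$ all commute with each other (and $u$ is even central).

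First I would rewrite the displayed inequality as the positivity of the single $2\times 2$ matrix
\[
T := \begin{pmatrix} \lambda x - a^2 & -abu \\ -abu^* & \lambda - b^2 \end{pmatrix} \in M_2(M).
\]
Since $0\le b\le 1$ and $\lambda>1$, we have $\lambda - b^2 \ge \lambda - 1 > 0$, so the bottom-right entry $C := \lambda - b^2$ is invertible in $M_+$ with inverse $(\lambda - b^2)^{-1}$. This is exactly the situation in which the Schur complement applies.

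Next I would apply the standard Schur complement factorization
\[
T = \begin{pmatrix} 1 & -abu\,C^{-1} \\ 0 & 1 \end{pmatrix}
\begin{pmatrix} \lambda x - a^2 - (abu)C^{-1}(abu)^* & 0 \\ 0 & C \end{pmatrix}
\begin{pmatrix} 1 & 0 \\ -C^{-1}(abu)^* & 1 \end{pmatrix},
\]
which shows $T\ge 0$ in $M_2(M)$ if and only if $\lambda x - a^2 - (abu)(\lambda - b^2)^{-1}(abu)^* \ge 0$ in $M$.

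Finally I would simplify the Schur complement. Because $a,b$ commute (as they appear in $a^2+b^2=1$, both are continuous functions of $a^2$), $(\lambda - b^2)^{-1}$ lies in the abelian von Neumann subalgebra generated by $b$, and $u\in \U(\Z(M))$ commutes with every element of $M$ and satisfies $uu^* = 1$. Hence
\[
(abu)(\lambda - b^2)^{-1}(abu)^* \;=\; a^2 b^2 (\lambda - b^2)^{-1} u u^* \;=\; a^2 b^2(\lambda - b^2)^{-1},
\]
so the Schur-complement inequality becomes $\lambda x \ge a^2 + a^2 b^2(\lambda - b^2)^{-1}$, which is the claimed condition. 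No step here is an obstacle; the only point that needs care is tracking that $u$, $a$, $b$ and functions of $b^2$ all commute, so that the $u$'s cancel and the expression collapses to a scalar-like formula in the commutative $\mathrm C^*$-algebra generated by $b$.
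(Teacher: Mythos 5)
Your proof is correct and rests on the same underlying idea as the paper's: reducing the $2\times 2$ positivity to the Schur-complement inequality $\lambda x - a^2 \geq a^2b^2(\lambda-b^2)^{-1}$, which is legitimate since $\lambda - b^2 \geq \lambda - 1 > 0$ is invertible and $a$, $b$, $u$ all commute. The only cosmetic difference is that the paper first conjugates by $\begin{pmatrix}1&0\\0&u\end{pmatrix}$ to remove $u$ and then proves the two implications by separate one-sided congruences (a row-vector congruence for one direction, an explicit rank-one factorization for the other), whereas you package both directions at once via the standard triangular Schur factorization.
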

\begin{proof}
We have
\[
\begin{split}
&\lambda\begin{pmatrix}x&0\\0&1\end{pmatrix} \geq \begin{pmatrix}a^2&abu\\abu^*&b^2\end{pmatrix}\\
&\Longleftrightarrow\lambda\begin{pmatrix}1&0\\0&u\end{pmatrix}\begin{pmatrix}x&0\\0&1\end{pmatrix} \begin{pmatrix}1&0\\0&u^*\end{pmatrix}\geq 
\begin{pmatrix}1&0\\0&u\end{pmatrix}\begin{pmatrix}a^2&abu\\abu^*&b^2\end{pmatrix}\begin{pmatrix}1&0\\0&u^*\end{pmatrix}\\
&\Longleftrightarrow\lambda\begin{pmatrix}x&0\\0&1\end{pmatrix} \geq 
\begin{pmatrix}a^2&ab\\ab&b^2\end{pmatrix}\\
&\Longleftrightarrow 
\begin{pmatrix}\lambda x-a^2&-ab\\-ab&\lambda-b^2\end{pmatrix}\geq 0
\end{split}
\]
and
\[
\begin{split}
&\begin{pmatrix}\lambda x-a^2&-ab\\-ab&\lambda-b^2\end{pmatrix}\geq 0\\
&\Longrightarrow
\begin{pmatrix}(\lambda-b^2)^{1/2}&ab(\lambda-b^2)^{-1/2}\end{pmatrix}\begin{pmatrix}\lambda x-a^2&-ab\\-ab&\lambda-b^2\end{pmatrix}
\begin{pmatrix}(\lambda-b^2)^{1/2}\\(\lambda-b^2)^{-1/2}ab\end{pmatrix}\geq 0\\
&\Longleftrightarrow 
(\lambda-b^2)^{1/2}(\lambda x-a^2)(\lambda-b^2)^{1/2} \geq a^2b^2\\
&\Longleftrightarrow 
\lambda x-a^2 \geq (\lambda-b^2)^{-1/2} a^2b^2 (\lambda-b^2)^{-1/2}\\
&\Longleftrightarrow 
\lambda x \geq a^2b^2(\lambda-b^2)^{-1}+a^2.
\end{split}
\]
Conversely, if $\lambda x \geq a^2b^2(\lambda-b^2)^{-1}+a^2$, then 
\[
\begin{split}
\begin{pmatrix}\lambda x-a^2&-ab\\-ab&\lambda-b^2\end{pmatrix}
&\geq\begin{pmatrix}a^2b^2(\lambda-b^2)^{-1}&-ab\\-ab&\lambda-b^2\end{pmatrix}\\
&= \begin{pmatrix}ab(\lambda-b^2)^{-1/2}\\-(\lambda-b^2)^{1/2}\end{pmatrix}
\begin{pmatrix}ab(\lambda-b^2)^{-1/2}&-(\lambda-b^2)^{1/2}\end{pmatrix}\geq 0.
\end{split}
\]
\end{proof}

We recall Dye's theorem on orthoisomorphisms between projection lattices.
Let $M$ and $N$ be two von  Neumann algebras. 
A bijection $\phi$ from $\P(M)$ onto $\P(N)$ is called an \emph{orthoisomorphism} when it satisfies $pq = 0$ if and only if $\phi(p)\phi(q) = 0$, for $p, q\in \P(M)$.
\begin{theorem}[Dye, {\cite[Corollary of Theorem 1]{D}}]
Let $M$ and $N$ be two von Neumann algebras. 
Suppose that $M$ does not admit a type I$_2$ direct summand and that $\phi\colon \P(M)\to \P(N)$ is an orthoisomorphism.
Then there exists a Jordan $^*$-isomorphism from $M$ onto $N$ that extends $\phi$. 
\end{theorem}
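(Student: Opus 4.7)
The plan is to promote the orthogonality-preserving bijection $\phi$ step by step: first to a lattice automorphism of $\P(M)\to\P(N)$, then to a map respecting the type decomposition of both algebras, and finally to a Jordan $^*$-isomorphism via a Gleason-style extension on each type summand. For the first step, note that $p\leq q\iff p\perp q^{\perp}$, and since $\phi$ preserves $\perp$ in both directions, a short argument (applying the definition to $p$, $p^{\perp}$ and to $\phi^{-1}$) yields $\phi(p^{\perp})=\phi(p)^{\perp}$. Consequently $\phi$ is an order isomorphism of projection lattices, preserves arbitrary suprema and infima, and in particular satisfies $\phi\bigl(\bigvee_i p_i\bigr)=\sum_i\phi(p_i)$ whenever $\{p_i\}$ is an orthogonal family.

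Next I would reduce to the type decomposition. Centrality is lattice-theoretic, since $p$ is central iff every $q\in\P(M)$ splits as $q=(q\wedge p)\vee(q\wedge p^{\perp})$; hence $\phi$ restricts to a bijection $\P(\Z(M))\to\P(\Z(N))$. The Murray--von Neumann comparison $p\prec q$, abelianness of a projection, finiteness, and proper infiniteness are all expressible using only the projection lattice together with central data, so they are preserved by $\phi$. It follows that $\phi$ sends the type-$i$ summand of $M$ to the type-$i$ summand of $N$ for each $i\in\{\mathrm{I}_n, \mathrm{II}_1, \mathrm{II}_\infty, \mathrm{III}\}$; in particular, the no-type-I$_2$ hypothesis transfers to $N$, and one may prove the theorem separately on each summand.

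On each summand I would construct the Jordan $^*$-isomorphism as follows. For type I$_n$ with $n\neq 2$, pick a system of orthogonal equivalent abelian projections $\{e_j\}$ summing to $1$; since abelianness and equivalence are lattice-theoretically preserved, $\{\phi(e_j)\}$ carries the same matrix-unit skeleton in $N$, and one extends $\phi$ to a Jordan $^*$-isomorphism by matching matrix expansions. For types II and III, one invokes the hard Gleason-style theorem from Dye's original paper: the orthogonally additive $E(N)$-valued function $\phi$ on $\P(M)$ extends uniquely to a bounded linear map $J\colon M_{sa}\to N_{sa}$. Since the spectral theorem makes finite real-linear combinations of projections norm-dense in $M_{sa}$, and $\phi(p)^2=\phi(p)=\phi(p^2)$ on projections, the identity $J(a^2)=J(a)^2$ propagates to all $a\in M_{sa}$, which is equivalent to the Jordan $^*$-isomorphism identity. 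The main obstacle is precisely this Gleason-type extension on type II and III summands: there are no minimal projections to anchor values, and one must piece together linearity out of orthogonal additivity, halving/doubling of projections, and Murray--von Neumann dimension theory. The no-I$_2$ hypothesis is essential at this stage because in a type I$_2$ algebra the projection lattice restricts, on each central fibre, to a Bloch sphere admitting orientation-reversing homeomorphisms that are orthoisomorphisms but do not come from any Jordan $^*$-homomorphism.
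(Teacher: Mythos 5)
This statement is Dye's theorem, which the paper quotes from \cite{D} without proof, so there is no in-paper argument to measure you against; your sketch has to stand on its own, and its pivotal step contains a genuine gap. The sentence ``one invokes the hard Gleason-style theorem from Dye's original paper: the orthogonally additive $E(N)$-valued function $\phi$ on $\P(M)$ extends uniquely to a bounded linear map'' appeals to a theorem that is simply not in Dye's paper: Dye (1955) predates Gleason's theorem (1957), and the result you actually need -- every bounded, finitely orthogonally additive map from $\P(M)$ into a Banach space extends to a bounded linear operator on $M$ when $M$ has no type I$_2$ summand -- is the solution of the Mackey--Gleason problem due to Bunce and Wright (1992), building on Christensen and Yeadon; Dye's own proof proceeds by a quite different lattice-geometric route. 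With the correct theorem invoked, your plan does work and is a recognized modern derivation: $\phi$ is orthoadditive and bounded (its values are projections), the linear extension $J$ satisfies $J(a^2)=J(a)^2$ on finite real combinations of orthogonal spectral projections and hence on all of $M_{sa}$ by norm density and continuity, and bijectivity -- which you never address -- follows by applying the same extension theorem to $\phi^{-1}$ (legitimate, since the type decomposition argument shows $N$ has no I$_2$ summand) and using uniqueness of bounded linear extensions, projections having norm-dense real span in $M_{sa}$, so the two extensions are mutually inverse. As written, however, the heart of the matter is delegated to a nonexistent citation, and the theorem that actually fills the hole is at least as deep as the statement being proved.

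Two further points. Your separate type I$_n$ construction is hand-waved exactly where the difficulty sits: knowing that $\{\phi(e_j)\}$ is again an orthogonal family of equivalent abelian projections summing to $1$ only transports the ``diagonal''; it gives no recipe for defining the prospective Jordan $^*$-isomorphism on off-diagonal matrix units, nor any reason why such a choice is compatible with $\phi$ on all remaining projections (compare the type I$_2$ analysis inside the proof of Theorem \ref{Jordan}, where precisely this compatibility is the labor). Fortunately this detour is unnecessary, since the Mackey--Gleason route applies uniformly to every algebra without an I$_2$ summand. Your description of the I$_2$ obstruction is also off: orientation-reversing \emph{isometries} of the Bloch sphere do arise from Jordan $^*$-automorphisms (the transpose induces one), so orientation reversal is a red herring; the genuine counterexamples are bijections of the sphere commuting with the antipodal map that lie outside $O(3)$ -- for instance discontinuous ones -- which give orthoisomorphisms of $\P(M_2(\mathbb{C}))$ admitting no Jordan extension. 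Finally, the preliminary reductions (orthocomplementation, order, centrality, preservation of the type decomposition) are fine in outline, but the blanket claim that Murray--von Neumann comparison is lattice-expressible deserves justification or omission; for the type decomposition it suffices that abelianness (distributivity of $[0,p]$) and finiteness (modularity of $[0,p]$) are lattice-theoretic, together with centrality and central supports.
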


Now we are in the position of proving one of the main results in this paper. 
This gives a generalization of \cite[Corollary 4]{M03}, in which Moln\'ar considered the case of type I factors. 
The author obtained a hint of this proof partly from \cite[Section 4]{S17} by \v{S}emrl. 
\begin{theorem}\label{Jordan}
Let $M$ and $N$ be two von Neumann algebras. 
Suppose that $\phi\colon E(M)\to E(N)$ is an order isomorphism that satisfies $\phi(1/2)=1/2$.
Then $\phi|_{\P(M)}\colon \P(M)\to \P(N)$ is an orthoisomorphism. 
Moreover, if in addition $M$ does not admit a type I$_1$ direct summand, then $\phi$ extends uniquely to a Jordan $^*$-isomorphism from $M$ onto $N$. 
\end{theorem}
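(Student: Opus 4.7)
My plan is to prove the orthoisomorphism statement by exploiting the single constraint $\phi(1/2)=1/2$ to transport orthogonality through $\phi$, and then to extend $\phi$ to a Jordan *-isomorphism via Dye's theorem (for the non-type-I$_2$ part of $M$) plus a separate argument for the type I$_2$ summand. The crucial first observation is that $\phi(p/2)=\phi(p)/2$ for every $p\in\P(M)$: indeed, any $x\in E(M)$ with $x\le p$ lies in the corner $pMp$ (so $x=pxp$), and then $x\le 1/2$ reduces to $x\le p/2$; hence $p\wedge 1/2=p/2$ in $E(M)$. Since $\phi$ preserves infima, $\phi(1/2)=1/2$, and $\phi(p)\in\P(N)$ (from the preceding Corollary), we conclude $\phi(p/2)=\phi(p)/2$.

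For $p,q\in\P(M)$ with $p\wedge q=0$ and $p\vee q=1$, Lemma \ref{lem3} states: if every $x\in E(M)$ with $x\ge p$ and $x\ge q/2$ satisfies $x\ge 1/2$, then $pq=0$. Conversely, if $pq=0$ then $q=p^{\perp}$, and Lemma \ref{lem1} gives $p+p^{\perp}/2=1/2+p/2\ge 1/2$ as the minimum of $\{x\in E(M):x\ge p,\,x\ge p^{\perp}/2\}$, verifying the order condition. Hence $pq=0$ is equivalent to an order-theoretic statement involving only $p$, $q/2$, $1/2$, and the lattice operations on $\P$ (which coincide with infima/suprema inside $E(M)$); by the previous paragraph all of these are transported correctly by $\phi$, so $pq=0\iff\phi(p)\phi(q)=0$ whenever $p,q$ are lattice-complements. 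Applying this to $(p,p^{\perp})$ yields $\phi(p)\phi(p^{\perp})=0$; together with $\phi(p)\vee\phi(p^{\perp})=\phi(1)=1$ this forces $\phi(p^{\perp})=\phi(p)^{\perp}$. For arbitrary $p,q\in\P(M)$ we then get $pq=0\iff p\le q^{\perp}\iff\phi(p)\le\phi(q)^{\perp}\iff\phi(p)\phi(q)=0$, finishing the first half.

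For the Jordan *-isomorphism extension under the no-type-I$_1$ hypothesis, I would split $M$ via a central projection $z$ into its type I$_2$ summand $Mz$ and its complement $Mz^{\perp}$, which has neither I$_1$ nor I$_2$ summand. Central projections are characterized order-theoretically on projections (e.g.\ by $p=(p\wedge z)+(p\wedge z^{\perp})$ for all $p\in\P(M)$), so the orthoisomorphism transports this splitting to $N$, and Lemma \ref{lem2} decomposes $\phi$ as a direct sum. On $Mz^{\perp}$, Dye's theorem extends $\phi|_{\P(Mz^{\perp})}$ to a Jordan *-isomorphism $J_2$; on $Mz$ a bespoke argument adapting \v{S}emrl's I$_2$-factor reasoning (and using the full effect-algebra order on $E(Mz)$, not merely the projection lattice) produces $J_1$. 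Assembling yields a Jordan *-isomorphism $J\colon M\to N$ agreeing with $\phi$ on $\P(M)$. To verify $J=\phi$ on all of $E(M)$, I would consider $\psi:=J^{-1}\circ\phi$, an order automorphism of $E(M)$ fixing every projection and sending $1/2$ to $1/2$; under the no-I$_1$ assumption a rigidity argument (which rules out the $f_{\alpha}$-type deformations, $\alpha\ne0$, via the fixed value at $1/2$) forces $\psi=\mathrm{id}$. Uniqueness of the extension is automatic because $\P(M)$ is weakly dense in $M$.

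I expect the main obstacles to be the type I$_2$ case---where Dye's theorem fails and the extension must be built directly from the full order structure of $E(Mz)$---and the concluding rigidity step that lifts an equality on $\P(M)$ to an equality on all of $E(M)$ using only the constraints $\psi|_{\P(M)}=\mathrm{id}$ and $\psi(1/2)=1/2$.
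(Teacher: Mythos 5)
Your first half is essentially the paper's argument and is correct: you transport $p\wedge\frac12=p/2$ (maximum of $\{x\in E(M):x\le p,\ x\le 1/2\}$) through $\phi$, use Lemma \ref{lem1} to get the hypothesis of Lemma \ref{lem3} for the pair $(\phi(p),\phi(p^{\perp}))$, conclude $\phi(p^{\perp})=\phi(p)^{\perp}$, and then reduce general orthogonality to $p\le q^{\perp}$. This matches the paper step for step, including the use of the corollary that $\phi(\P(M))=\P(N)$ and the transport of the lattice relations $\phi(p)\wedge\phi(p^{\perp})=0$, $\phi(p)\vee\phi(p^{\perp})=1$.

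The second half, however, has a genuine gap: the two steps you defer are precisely the technical core of the theorem, and the mechanism you hint at for the rigidity step would not work as stated. After Dye's theorem gives $J$ with $J=\phi$ on $\P(M)$, you must show that $\psi:=J^{-1}\circ\phi$, an order automorphism of $E(M)$ fixing every projection and fixing $1/2$, is the identity. ``Ruling out the $f_{\alpha}$-type deformations'' is not enough, because there is no a priori classification saying that projection-fixing order automorphisms of $E(M)$ are of the form $f_{\alpha}$; indeed Proposition \ref{abel} shows that in the commutative case such automorphisms can be essentially arbitrary fiberwise monotone bijections, many of which fix $1/2$. So the rigidity must exploit noncommutativity substantively. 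The paper does this by choosing $p\prec p^{\perp}$, identifying $(p+q)M(p+q)$ with $M_2(pMp)$ via a partial isometry, proving $\psi(p/3)=p/3$ through the matrix inequality of Lemma \ref{lem4} (applied to $\psi$ and $\psi^{-1}$), assembling $\psi(1/3)=1/3$ from a decomposition $p_1+p_2+p_3=1$ with $p_k\prec p_k^{\perp}$, bootstrapping to $\psi(c)=c$ for all $c\in[0,1]$ by restricting to $M_{[0,2/3]}$ and $M_{[1/3,1]}$, then proving by induction that $\psi$ fixes all finite sums $\sum_n c_nq_n$ with $q_n$ orthogonal projections, and finally using that every element of $E(M)$ is a supremum of such sums. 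None of this is supplied or replaced in your proposal. Likewise, the type I$_2$ summand (where Dye's theorem is unavailable) requires its own lengthy construction --- working in $M_2(A)$, pinning down $\phi$ on scalar multiples of maximal abelian projections and on the projections $\frac12\begin{pmatrix}1&1\\1&1\end{pmatrix}$, $\frac12\begin{pmatrix}1&i\\-i&1\end{pmatrix}$, and again a supremum argument --- which you acknowledge but do not carry out. As it stands, the proposal proves the orthoisomorphism statement but only outlines, without proof, the extension to a Jordan $^*$-isomorphism. (A small additional remark: uniqueness of the extension is best justified by linearity together with the fact that $E(M)$ spans $M$, rather than weak density of $\P(M)$ alone, since the extension is required to agree with $\phi$ on all of $E(M)$.)
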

\begin{proof}
Suppose $p\in \P(M)$. 
Then the pair $(\phi(p), \phi(p^{\perp}))$ satisfies the condition in $(2)$ of Lemma \ref{lem-proj}. 
It follows that $\phi(p), \phi(p^{\perp})\in \P(N)$ and $\phi(p)\wedge \phi(p^{\perp})= 0$, $\phi(p)\vee \phi(p^{\perp}) = 1$. 
Since $p^{\perp}/2$ is the maximum element in $\{y\in E(M)\mid y\leq p^{\perp},\,\, y\leq 1/2\}$, $\phi(p^{\perp}/2)$ is the maximum element in $\{y\in E(N)\mid y\leq \phi(p^{\perp}),\,\, y\leq 1/2\}$. 
Thus we have $\phi(p^{\perp}/2) = \phi(p^{\perp})/2$. 
By Lemma \ref{lem1}, if $a\in E(M)$ satisfy $a\geq p$ and $a\geq p^{\perp}/2$, then $a\geq 1/2$. 
It follows that if $b\in E(N)$ satisfies $b\geq \phi(p)$ and $b\geq \phi(p^{\perp}/2)\,(= \phi(p^{\perp})/2)$, then $b\geq 1/2$. 
By Lemma \ref{lem3}, we have $\phi(p^{\perp})=\phi(p)^{\perp}$, which completes the first half of this proof. 
In particular, $\phi$ maps a central projection in $M$ to some central projection in $N$ (\cite[Lemma 1]{D}). 
Thus we may apply Lemma \ref{lem2} to decompose $\phi$ into a direct sum. 

Suppose that $M$ does not admit a type I$_1$ direct summand. 
Then $N$ does not, either.  
Take the central projection $r\in \P(\Z(M))$ such that $Mr$ is of type I$_2$ and $Mr^{\perp}$ does not have a type I$_2$ direct summand. 
It follows that $N\phi(r)$ is of type I$_2$ and $N\phi(r)^{\perp}$ does not have a type I$_2$ direct summand.

We first consider I$_2$ cases. 
Suppose that $M$ and $N$ are of type I$_2$. 
Since there exists an orthoisomorphism between projection lattices, 
it follows that $M$ and $N$ are $^*$-isomorphic. 
Let $p\in M$ be a maximal abelian projection in $M$. 
Then so is $\phi(p)$ in $N$. 
We would like to show that $\phi(cp) =c\phi(p)$ for an arbitrary real number $0\leq c\leq 1$. 

We may assume that $M$ and $N$ are identified with $M_2(A)$ for some abelian von Neumann algebra $A$, and that 
\[
p=\begin{pmatrix}1&0\\0&0\end{pmatrix}\in M_2(A)=M,\quad 
\phi(p)=\begin{pmatrix}1&0\\0&0\end{pmatrix}\in M_2(A)=N.
\]
Since $\displaystyle\frac{1}{2}\begin{pmatrix}1&1\\1&1\end{pmatrix}$ is a maximal abelian projection in $M$, so is $\displaystyle\phi\left(\frac{1}{2}\begin{pmatrix}1&1\\1&1\end{pmatrix}\right)$ in $N$. 
Since $A$ is abelian and $\displaystyle\phi\left(\frac{1}{2}\begin{pmatrix}1&1\\1&1\end{pmatrix}\right) \sim \begin{pmatrix}1&0\\0&0\end{pmatrix}$, 
there exist operators $a, b\in E(A)$ and $v\in \U(A)$ such that $a^2+b^2=1\in A$ and  
\[
\phi\left(\frac{1}{2}\begin{pmatrix}1&1\\1&1\end{pmatrix}\right) = 
\begin{pmatrix}a^2&abv\\abv^*&b^2\end{pmatrix}. 
\]
Considering $\operatorname{Ad}\begin{pmatrix}1&0\\0&v\end{pmatrix}\circ \phi$ instead of $\phi$, we may assume $v=1$. 
It follows that 
\[
\phi\left(\frac{1}{2}\begin{pmatrix}1&-1\\-1&1\end{pmatrix}\right) = 
\begin{pmatrix}b^2&-ab\\-ab&a^2\end{pmatrix}.
\]
By the assumption $\phi(1/2)=1/2$, we also have
\[
\phi\left(\frac{1}{4}\begin{pmatrix}1&1\\1&1\end{pmatrix}\right) = \frac{1}{2}
\begin{pmatrix}a^2&ab\\ab&b^2\end{pmatrix},\quad
\phi\left(\frac{1}{4}\begin{pmatrix}1&-1\\-1&1\end{pmatrix}\right) = \frac{1}{2}
\begin{pmatrix}b^2&-ab\\-ab&a^2\end{pmatrix}.
\]

We show $\phi(p/3)=p/3$. 
Since $\phi(p/3)\leq \phi(p)$, we can take $x\in E(A)$ such that $\phi(p/3)=\begin{pmatrix}x&0\\0&0\end{pmatrix}$. 
Then, by Lemma \ref{lem1}, we have $\phi(p/3 + p^{\perp})=\begin{pmatrix}x&0\\0&1\end{pmatrix}$. 
By Lemma \ref{lem4}, we obtain 
\[
\begin{pmatrix}1/3&0\\0&1\end{pmatrix} \geq \frac{1}{4}\begin{pmatrix}1&\pm1\\\pm1&1\end{pmatrix}
\]
in $M_2(A)$. 
Thus we have 
\begin{equation} \label{ineq}
\begin{pmatrix}x&0\\0&1\end{pmatrix} \geq\frac{1}{2}
\begin{pmatrix}a^2&ab\\ab&b^2\end{pmatrix}
\quad \text{and} \quad
\begin{pmatrix}x&0\\0&1\end{pmatrix} \geq\frac{1}{2}
\begin{pmatrix}b^2&-ab\\-ab&a^2\end{pmatrix}. 
\end{equation}
By Lemma \ref{lem4}, we obtain 
\[
2x\geq a^2b^2(2-b^2)^{-1} + a^2 \quad \text{and} \quad 
2x\geq a^2b^2(2-a^2)^{-1} + b^2.
\]
Since $A$ is abelian, we have $x\geq 1/3$ in $A$. That is, $\phi(p/3)\geq p/3$. 
The same discussion applied to $\phi^{-1}$ instead of $\phi$ shows that $\phi^{-1}(p/3)\geq p/3$ and thus $\phi(p/3)\leq p/3$. 
Hence we have $\phi(p/3) = p/3$. 
Furthermore, we have
$\phi(p/3+p^{\perp}) = p/3+p^{\perp}$.  
Similarly, we obtain $\phi(p^{\perp}/3) = \phi(p^{\perp})/3$. 
Note that $\{y\in E(M)\mid p/3\leq y\leq p/3+p^{\perp}\} = \{p/3 + y\mid y\in E(p^{\perp}Mp^{\perp}) \}$. 
Thus $1/3$ is the minimum element in $\{y\in E(M)\mid p/3\leq y\leq p/3+p^{\perp},\, y\geq p^{\perp}/3\}$.
It follows $\phi(1/3)=1/3$.  

A similar discussion shows that $\phi(2p/3) = 2p/3$ and $\phi(2/3)=2/3$. 
Considering the restriction $\phi|_{M_{[0, 2/3]}}\colon M_{[0, 2/3]}\to N_{[0, 2/3]}$ or $\phi|_{M_{[1/3, 1]}}\colon M_{[1/3, 1]}\to N_{[1/3, 1]}$ and iterative arguments, 
we have $\phi(cp)=cp$ and $\phi(c)=c$ for every $c$ in some dense subset of $[0, 1]$.  
Since $\phi$ is an order isomorphism, the same holds for every real number $0\leq c\leq 1$. 

By the inequality $(\ref{ineq})$ with $x=1/3$ and Lemma \ref{lem4}, we also have $a^2=b^2= 1/2$ in $A$ and thus 
\[
\phi\left(\frac{1}{2}\begin{pmatrix}1&1\\1&1\end{pmatrix}\right) = \frac{1}{2}\begin{pmatrix}1&1\\1&1\end{pmatrix}.
\] 
Using the same discussion and considering relative position among 
\[
p,\, \frac{1}{2}\begin{pmatrix}1&1\\1&1\end{pmatrix},\, \frac{1}{2}\begin{pmatrix}1&i\\-i&1\end{pmatrix}
\]
and their orthocomplements, we obtain 
\[
\phi\left( \frac{1}{2}\begin{pmatrix}1&i\\-i&1\end{pmatrix}\right) \in 
\left\{ \frac{1}{2}\begin{pmatrix}1&i\\-i&1\end{pmatrix},\,\frac{1}{2}\begin{pmatrix}1&-i\\i&1\end{pmatrix}\right\}.
\]
Note that, 
\begin{itemize}
\item (\cite[Lemma 1]{D}) for $e\in \P(M)$, $e\in\P(\Z(M))$ if and only if $\phi(e)\in\P(\Z(N))$. By Dye's theorem, $\phi|_{\P(\Z(M))}\colon \P(\Z(M))\to \P(\Z(N))$ extends to a (Jordan) $^*$-isomorphism from $\Z(M)\,\,(\cong A)$ onto $\Z(N)\,\,(\cong A)$.  
\item for a $^*$-automorphism $\pi\colon A\to A$, the mapping
\[
\begin{pmatrix}x_{11}&x_{12}\\x_{21}&x_{22}\end{pmatrix} \mapsto \begin{pmatrix}\pi(x_{11})&\pi(x_{12})\\\pi(x_{21})&\pi(x_{22})\end{pmatrix},\quad x_{11}, x_{12}, x_{21}, x_{22}\in A
\]
is a $^*$-automorphism on $M_2(A)$. 
\item the mapping 
\[
\begin{pmatrix}x_{11}&x_{12}\\x_{21}&x_{22}\end{pmatrix} \mapsto \begin{pmatrix}x_{11}&x_{21}\\x_{12}&x_{22}\end{pmatrix},\quad x_{11}, x_{12}, x_{21}, x_{22}\in A
\]
is a $^*$-antiautomorphism on $M_2(A)$ which fixes every element in $\Z(M_2(A))$. 
\end{itemize}
Therefore, we can take a Jordan $^*$-isomorphism $J\colon M\to N$ that satisfies $J(q)= \phi(q)$ for every $q$ in 
\[
\P(\Z(M)) \cup \left\{p, \frac{1}{2}\begin{pmatrix}1&1\\1&1\end{pmatrix},\, \frac{1}{2}\begin{pmatrix}1&i\\-i&1\end{pmatrix}\right\}. 
\]
We show that $J$ extends $\phi$. 
Equivalently, we show that $\psi:= J^{-1}\circ \phi$ is the identity mapping on $E(M)$. 

First, we prove 
\[
\psi\begin{pmatrix}s^2&st\\st&t^2\end{pmatrix}= \begin{pmatrix}s^2&st\\st&t^2\end{pmatrix}
\]
for every pair $(s, t)$ of real numbers with $s, t\geq 0$ and $s^2+t^2=1$. 
There exist operators $a_1, b_1\in E(A)$ and $v_1\in \U(A)$ such that $a_1^2+b_1^2 =1$ and 
\[
\psi\begin{pmatrix}s^2&st\\st&t^2\end{pmatrix}= \begin{pmatrix}a_1^2&a_1b_1v_1\\a_1b_1v_1^*&b_1^2\end{pmatrix}. 
\]
By Lemma \ref{lem4}, we obtain 
\[
\begin{split}
\begin{pmatrix}1/2&0\\0&1\end{pmatrix}
&\geq (2s^2 + t^2)^{-1}\begin{pmatrix}s^2&st\\st&t^2\end{pmatrix},\\
\begin{pmatrix}1/2&0\\0&1\end{pmatrix}
&\geq (s^2+ 2t^2)^{-1}\begin{pmatrix}t^2&-st\\-st&s^2\end{pmatrix}. 
\end{split}
\]
Since $\psi(ce) = c\psi(e)$ for every real number $0\leq c\leq 1$ and every projection $e\in \P(M)$, 
we have 
\[
\begin{split}
\begin{pmatrix}1/2&0\\0&1\end{pmatrix}
&\geq  (2s^2 + t^2)^{-1}\begin{pmatrix}a_1^2&a_1b_1v_1\\a_1b_1v_1^*&b_1^2\end{pmatrix},\\
\begin{pmatrix}1/2&0\\0&1\end{pmatrix}
&\geq (s^2+ 2t^2)^{-1}\begin{pmatrix}b_1^2&-a_1b_1v_1\\-a_1b_1v_1^*&a_1^2\end{pmatrix}. 
\end{split}
\]
Again by Lemma \ref{lem4}, we obtain $a_1=s$ and $b_1=t$. 

What we have just shown means that, for a maximal abelian projection $e$ in $M$ and a real number $0\leq c\leq 1$, we have $\lVert e-p\rVert = c$ and $\lVert e-p^{\perp}\rVert = (1-c^2)^{1/2}$ if and only if $\lVert \psi(e)-\psi(p)\rVert = c$ and $\lVert \psi(e)-\psi(p^{\perp})\rVert = (1-c^2)^{1/2}$. 
The same discussion shows 
\[
\begin{split}
\left\| \begin{pmatrix}s^2&st\\st&t^2\end{pmatrix}-\frac{1}{2}\begin{pmatrix}1&\pm1\\\pm1&1\end{pmatrix} \right\|
&= 
\left\| \psi\begin{pmatrix}s^2&st\\st&t^2\end{pmatrix}-\psi\left(\frac{1}{2}\begin{pmatrix}1&\pm1\\\pm1&1\end{pmatrix}\right) \right\|\\
&= 
\left\| \begin{pmatrix}s^2&stv_1\\stv_1^*&t^2\end{pmatrix}-\frac{1}{2}\begin{pmatrix}1&\pm1\\\pm1&1\end{pmatrix} \right\|
\end{split}
\]
and we also obtain $v_1=1$. 

Similarly, we can show 
\[
\psi\left(\frac{1}{2}\begin{pmatrix}1&\lambda\\ \bar{\lambda}&1\end{pmatrix}\right) = \frac{1}{2}\begin{pmatrix}1&\lambda\\ \bar{\lambda}&1\end{pmatrix}
\] 
for every $\lambda\in \mathbb{T}:=\{\gamma\in \mathbb{C}\mid \lvert \gamma\rvert = 1\}$. 
Using this, we can also prove that 
\[
\psi\begin{pmatrix}s^2&st\lambda\\st\bar{\lambda}&t^2\end{pmatrix} = \begin{pmatrix}s^2&st\lambda\\st\bar{\lambda}&t^2\end{pmatrix}
\] 
for any $\lambda\in \mathbb{T}$ and real numbers $s, t\geq 0$ with $s^2+t^2=1$. 

By the assumption on $J$, it follows that $\psi$ fixes operators of the form 
\[
\sum_{n=1}^{m} \left(c_n \begin{pmatrix}s_n^2&s_nt_n\lambda_n\\s_nt_n\overline{\lambda_n}&t_n^2\end{pmatrix} +  d_n \begin{pmatrix}t_n^2&-s_nt_n\lambda_n\\-s_nt_n\overline{\lambda_n}&s_n^2\end{pmatrix}\right) e_n, 
\]
where 
\begin{itemize}
\item $\{e_n\}_{n=1}^{m}\subset \P(\Z(M))$ is an orthogonal family of central projections in $M$, 
\item $c_n$, $d_n$ are real numbers in $[0, 1]$, $n=1,\ldots, m$,   
\item $\lambda_n\in \mathbb{T}$, $n=1,\ldots, m$, and 
\item real numbers $s_n, t_n\geq 0$ satisfy $s_n^2+t_n^2 = 1$, $n=1,\ldots, m$. 
\end{itemize} 
Since every element in $E(M)$ can be expressed as the supremum of some collection of such operators, $\psi$ fixes every element in $E(M)$. \smallskip

We consider the case $M$ and $N$ do not admit I$_2$ nor I$_1$ direct summands. 
By Dye's theorem, there exists a Jordan $^*$-isomorphism $J\colon M\to N$ that  satisfies $J(p)=\phi(p)$ for all $p\in\P(M)$. 
We show that $\psi:=J^{-1}\circ \phi$ is the identity mapping on $E(M)$. 
First we prove that $\psi(p/3)=p/3$ for every projection $p\in \P(M)$ with $p\prec p^{\perp}$. 
There exists a partial isometry $v\in M$ such that $vv^*=p$ and $q:=v^*v\leq p^{\perp}$. 
Then we can identify $y\in (p+q)M(p+q)$ with 
$\begin{pmatrix}pyp&pyv^*\\vyp&vyv^*\end{pmatrix}\in M_2(pMp)$. 
We can take $x\in E(pMp)$ that satisfies 
\[
\psi(p/3) = \psi\begin{pmatrix}1/3&0\\0&0\end{pmatrix} = \begin{pmatrix}x&0\\0&0\end{pmatrix}. 
\]
It follows by Lemma \ref{lem1} that 
\[
\psi(p/3 + q) = \psi\begin{pmatrix}1/3&0\\0&1\end{pmatrix} = \begin{pmatrix}x&0\\0&1\end{pmatrix}. 
\]
Since 
\[
\begin{pmatrix}1/3&0\\0&1\end{pmatrix} \geq \frac{1}{4}\begin{pmatrix}1&1\\1&1\end{pmatrix}, 
\]
we have 
\[
\begin{pmatrix}x&0\\0&1\end{pmatrix} \geq\psi\left(\frac{1}{4}
\begin{pmatrix}1&1\\1&1\end{pmatrix}\right) = \frac{1}{4}
\begin{pmatrix}1&1\\1&1\end{pmatrix}.
\]
By Lemma \ref{lem4}, we obtain $x\geq 1/3$ in $pMp$. 
Use the same discussion on $\psi^{-1}$ to obtain $\psi(p/3)=p/3$. 

We can take projections $p_1, p_2, p_3\in\P(M)$ such that $p_1+p_2+p_3 = 1$, $p_k\prec p_k^{\perp}$, $k=1, 2, 3$. 
Then $\psi(p_k/3) = p_k/3$. 
By Lemma \ref{lem1}, we also have  $\psi(p_1/3 + p_1^{\perp}) = p_1/3 + p_1^{\perp}$. 
By the fact $(p_1+ p_2)/3$ is the minimum element in $\{y\in M_{[p_1/3, p_1/3 + p_1^{\perp}]}\mid y\geq p_2/3\}$, we have $\psi((p_1+ p_2)/3) = (p_1+ p_2)/3$ and additionally $\psi((p_1+ p_2)/3 + p_3) = (p_1+ p_2)/3 + p_3$. 
By the fact $1/3$ is the minimum element in $\{y\in M_{[(p_1+p_2)/3, (p_1+p_2)/3+ p_3]}\mid y\geq p_3/3\}$, we have $\psi(1/3) = 1/3$. 
By the discussion as in the first half of this proof, we have $\psi(c)=c$ for every $c\in [0, 1]$. 

We show by induction that for any $m\in \mathbb{N}$, $c_1, \ldots, c_m\in [0, 1]$ and  mutually orthogonal $q_1, \ldots, q_m\in \P(M)$, we have 
\[
\psi\left(\sum_{n=1}^m c_nq_n\right)= \sum_{n=1}^{m} c_nq_n. 
\]
The fact that $c_1q_1$ is the maximum element in the set $\{y\in E(M)\mid y\leq c_1,\,\, y\leq q_1\}$ shows $\psi(c_1q_1) = c_1q_1$. 
Suppose that $\psi(\sum_{n=1}^{m-1} c_nq_n)= \sum_{n=1}^{m-1} c_nq_n$ holds. 
By Lemma \ref{lem1}, we also have $\psi(\sum_{n=1}^{m-1} c_nq_n + (1-\sum_{n=1}^{m-1} q_n))= \sum_{n=1}^{m-1} c_nq_n + (1-\sum_{n=1}^{m-1} q_n)$. 
By the fact that $\sum_{n=1}^{m} c_nq_n$ is the minimum element in 
$\{y\in M_{[(\sum_{n=1}^{m-1} c_nq_n, \sum_{n=1}^{m-1} c_nq_n + (1-\sum_{n=1}^{m-1} q_n)]}\mid y\geq c_mq_m\}$, 
we obtain $\psi(\sum_{n=1}^{m} c_nq_n)= \sum_{n=1}^{m} c_nq_n$. 
Since $\psi$ is an order automorphism, $\psi$ fixes every element in $E(M)$. 
\end{proof}

We would like to consider general order isomorphisms between effect algebras. 
In the case of $\B(H)$, \v{S}emrl obtained the following description. 
\begin{theorem}[\v{S}emrl, {\cite[Theorem 2.2]{S17}}]
Assume $\operatorname{dim}H\geq 2$ and $\phi\colon E(\B(H))\to E(\B(H))$ is a mapping. 
Then the following two conditions are equivalent: 
\begin{enumerate}[$(1)$]
\item The mapping $\phi$ is an order isomorphism.  
\item There exist real numbers $\alpha, \beta$ with $0<\alpha<1$, $\beta<0$ and a bounded bijective  linear or conjugate-linear operator $T\colon H\to H$ with $\lVert T\rVert \leq 1$ such that 
\[
\phi(a) = f_{\beta}\left( (f_{\alpha}(TT^*))^{-1/2} f_{\alpha}(TaT^*) (f_{\alpha}(TT^*))^{-1/2}\right)
\]
for every $a\in E(\B(H))$. 
\end{enumerate}
\end{theorem}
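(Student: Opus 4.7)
The plan is to prove the two directions separately.

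For $(2)\Rightarrow(1)$, direct verification suffices. Both $f_\alpha$ (with $0<\alpha<1$) and $f_\beta$ (with $\beta<0$) are order automorphisms of $E(\mathcal{B}(H))$, as recorded in the preceding discussion, and $a\mapsto f_\alpha(a)$ is order-preserving on $\mathcal{B}(H)_+$ as well. Bijectivity of $T$ makes $TT^*$ invertible, hence $f_\alpha(TT^*)$ is invertible. Therefore $a\mapsto f_\alpha(TaT^*)$ is an order isomorphism from $E(\mathcal{B}(H))$ onto the operator interval $[0,f_\alpha(TT^*)]$, conjugation by $f_\alpha(TT^*)^{-1/2}$ carries this interval order-isomorphically onto $E(\mathcal{B}(H))$, and a final composition with $f_\beta$ yields an order automorphism of $E(\mathcal{B}(H))$.

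For $(1)\Rightarrow(2)$ I would reduce to Theorem \ref{Jordan}. Set $b:=\phi(1/2)$. By Theorem \ref{Semrl}(1), both $b$ and $1-b$ are invertible in $\mathcal{B}(H)$, so $\operatorname{spec}(b)$ is a compact subset of $(0,1)$. The first task is to construct a map $\phi_0$ of the form (2) satisfying $\phi_0(1/2)=b$. Granting that construction, $\psi:=\phi_0^{-1}\circ\phi$ is an order automorphism of $E(\mathcal{B}(H))$ fixing $1/2$. Since $\dim H\ge 2$, $\mathcal{B}(H)$ admits no type I$_1$ direct summand, so Theorem \ref{Jordan} extends $\psi$ to a Jordan $^*$-automorphism $J$ of $\mathcal{B}(H)$. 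By the classical structure theorem, $J(a)=UaU^*$ for some unitary or antiunitary $U$ on $H$. Hence $\phi(a)=\phi_0(UaU^*)$, and replacing the operator $T$ in the formula defining $\phi_0$ by $TU$ (bounded and bijective, linear or conjugate-linear, with the same norm as $T$) absorbs $U$ and yields a representation of $\phi$ in the form (2).

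The main obstacle is the construction of the model $\phi_0$. The ansatz is to take $T=A^{1/2}$ for a positive invertible contraction $A$ on $H$; then $TT^*=A$, and since all operators involved are functions of $A$, a short calculation gives
\[
\phi_0(1/2)\;=\;k_{\alpha,\beta}(A),\qquad k_{\alpha,\beta}(t)\;:=\;f_\beta\!\left(\frac{\alpha t+1-\alpha}{\alpha t+2(1-\alpha)}\right),
\]
with the right-hand side interpreted via functional calculus. A direct computation shows that $k_{\alpha,\beta}\colon[0,1]\to(0,1)$ is strictly increasing with endpoints $k_{\alpha,\beta}(0)=1/(2-\beta)$ and $k_{\alpha,\beta}(1)=1/(2-\alpha-\beta+\alpha\beta)$; driving $\beta\to-\infty$ pushes the left endpoint to $0$ and $\alpha\to 1^-$ pushes the right endpoint to $1$. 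Choose $\alpha\in(0,1)$ close to $1$ and $\beta<0$ sufficiently negative so that the image of $k_{\alpha,\beta}$ contains $\operatorname{spec}(b)$; then $A:=k_{\alpha,\beta}^{-1}(b)$ is a positive invertible contraction, and $T:=A^{1/2}$ produces a map $\phi_0$ of the required form with $\phi_0(1/2)=b$. The delicate point, where the parameter freedom of the statement is essential, is precisely this surjectivity adjustment of $k_{\alpha,\beta}$; once it is in hand, every other step is a formal reduction to results already established.
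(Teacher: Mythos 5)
Your argument is correct, and it is worth placing it relative to the paper: the paper does not prove this statement at all --- it is quoted from \v{S}emrl, whose original proof is a $\B(H)$-specific argument --- and the closest the paper comes is Proposition \ref{pq} together with its proof. Your reduction is essentially that proof specialized to $\B(H)$: verify $(2)\Rightarrow(1)$ by operator monotonicity of $f_\alpha$, and for $(1)\Rightarrow(2)$ build a model map $\phi_0$ of the form $(2)$ with $\phi_0(1/2)=\phi(1/2)$, apply Theorem \ref{Jordan} to $\phi_0^{-1}\circ\phi$, and absorb the resulting unitary or antiunitary implementing the Jordan automorphism into $T$. What you add, and what the paper's version deliberately sidesteps, is the normalization needed to keep $T$ a bounded contraction: Proposition \ref{pq} fixes $\alpha,\beta$ subject only to $(2-\beta)^{-1}\le\phi(1/2)$ at the price of allowing $T$ to be an unbounded invertible element of $\LS(N)$, whereas you tune $\alpha,\beta$ to $\operatorname{spec}(\phi(1/2))$ so that $T=A^{1/2}$ with $A=k_{\alpha,\beta}^{-1}(\phi(1/2))$ is a bounded bijective contraction, which is exactly what \v{S}emrl's formulation demands; your $k_{\alpha,\beta}(t)=(\alpha t+1-\alpha)/(\alpha t+(2-\beta)(1-\alpha))$ is consistent with the paper's $F_{\alpha,\beta}(t)=k_{\alpha,\beta}(t^2)$ and your endpoint values are right. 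Two small points. First, the parameters must be chosen in the correct order: fix $\beta$ with $(2-\beta)^{-1}<\min\operatorname{spec}(\phi(1/2))$ first, and only then take $\alpha$ close to $1$ depending on $\beta$, because for fixed $\alpha$ letting $\beta\to-\infty$ sends the right endpoint $1/\bigl((2-\alpha)-\beta(1-\alpha)\bigr)$ to $0$, not to $1$; the strict inequality at the left endpoint also guarantees that $A$ is invertible, hence $T$ bijective. Second, your appeal to Theorem \ref{Semrl}(1) is legitimate since that is quoted from \cite{S12}, but if you prefer the derivation to rest only on this paper's own results you can replace it by Proposition \ref{characterization} (or Theorem \ref{main}(4)) combined with the remark that local measurable invertibility in $\B(H)$ coincides with invertibility.
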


The following slightly modified version holds in general settings of von Neumann algebras. 

\begin{proposition}\label{pq}
Let $M$ and $N$ be von Neumann algebras without type I$_1$ direct summands.  
Assume that $\phi\colon E(M)\to E(N)$ is a mapping. 
Then the following two conditions are equivalent: 
\begin{enumerate}[$(1)$]
\item The mapping $\phi$ is an order isomorphism such that $\phi(1/2)$ is invertible in $N$.  
\item There exist a Jordan $^*$-isomorphism $J\colon M\to N$ and real numbers $\alpha, \beta$ with $0<\alpha<1$, $\beta<0$ and a locally measurable positive operator $T\in \LS(N)$ which is invertible in $\LS(N)$ such that 
\[
\phi(a) = f_{\beta}\left( (f_{\alpha}(T^2))^{-1/2} f_{\alpha}(TJ(a)T) (f_{\alpha}(T^2))^{-1/2}\right)
\]
for every $a\in E(M)$. 
Here we consider operations in $\LS(N)$. 
\end{enumerate}
Moreover, if $M$ and $N$ are Jordan $^*$-isomorphic and if $b_0\in E(N)$ is invertible in $N$ and $1-b_0$ is locally measurably invertible in $N$, then there exists an order isomorphism $\phi\colon E(M)\to E(N)$ as in $(2)$ with $\phi(1/2) =b_0$. 
\end{proposition}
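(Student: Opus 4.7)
The argument splits into three pieces: verifying $(2)\Rightarrow(1)$ by inspecting the formula, constructing a canonical order automorphism $\theta$ of $E(N)$ with $\theta(1/2)=b_0$ prescribed (which immediately yields the ``Moreover'' part), and reducing $(1)\Rightarrow(2)$ to Theorem \ref{Jordan} via this $\theta$.

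For $(2)\Rightarrow(1)$ I will read the right-hand side as a chain of order isomorphisms between operator intervals in $\LS(N)$:
\[
E(M)\xrightarrow{J}E(N)\xrightarrow{\mathrm{Ad}(T)}N_{[0,T^2]}\xrightarrow{f_\alpha}N_{[0,f_\alpha(T^2)]}\xrightarrow{\mathrm{Ad}(S^{-1/2})}E(N)\xrightarrow{f_\beta}E(N),
\]
where $S:=f_\alpha(T^2)$ and $\mathrm{Ad}(x)(y):=xyx^*$. Each arrow is an order isomorphism: $J$ is a Jordan $^*$-isomorphism; $\mathrm{Ad}(T)$ is bijective because $T\in\LS(N)$ is invertible there; $f_\alpha$ is operator monotone by its M\"obius representation displayed at the start of this section, hence an order isomorphism between the intervals indicated; the conjugation by $S^{-1/2}$ is the model order isomorphism of Lemma \ref{normalize}(3); and $f_\beta$ has already been shown to be an order automorphism of $E(N)$. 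A commutative functional-calculus computation gives $\phi(1/2)=f_\beta\bigl((\alpha T^2+1-\alpha)(\alpha T^2+2-2\alpha)^{-1}\bigr)$, whose spectrum is contained in $[(2-\beta)^{-1},1]$, so $\phi(1/2)$ is invertible in $N$.

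For the construction of $\theta$, suppose $b_0\in E(N)$ satisfies $b_0\in N^{-1}$ and $1-b_0$ is locally measurably invertible in $N$. First choose $\beta<0$ so small that $b_0-(2-\beta)^{-1}\cdot 1$ is invertible positive in $N$, which is possible because $b_0\in N^{-1}$. Set $c:=f_\beta^{-1}(b_0)\in E(N)$; the identities
\[
2c-1=\frac{(2-\beta)b_0-1}{1-\beta b_0},\qquad 1-c=\frac{1-b_0}{1-\beta b_0},
\]
together with the invertibility of $1-\beta b_0$ in $N$ (since $\beta<0$ and $b_0\geq 0$), show that $2c-1\in N^{-1}$ while $1-c$ is locally measurably invertible. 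Fixing any $\alpha\in(0,1)$, set $T^2:=\tfrac{1-\alpha}{\alpha}(2c-1)(1-c)^{-1}\in\LS(N)_+$; this element is invertible in $\LS(N)$, so $T:=(T^2)^{1/2}\in\LS(N)_+$ is the desired positive square root. Because everything in sight commutes with $T$, unwinding the formula gives $\theta(1/2)=f_\beta(c)=b_0$. The ``Moreover'' part now follows at once: given any Jordan $^*$-isomorphism $J_0\colon M\to N$, set $\phi:=\theta\circ J_0$.

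For $(1)\Rightarrow(2)$, apply the construction to $b_0:=\phi(1/2)$. The one ingredient we borrow from outside the present proposition is that $1-\phi(1/2)$ is locally measurably invertible in $N$, which is part of Theorem \ref{main}(4). The composite $\theta^{-1}\circ\phi\colon E(M)\to E(N)$ is then an order isomorphism fixing $1/2$, and since $M$ has no type I$_1$ summand, Theorem \ref{Jordan} produces a Jordan $^*$-isomorphism $J\colon M\to N$ with $J|_{E(M)}=\theta^{-1}\circ\phi$, so $\phi=\theta\circ J|_{E(M)}$ is exactly the formula in $(2)$. The main obstacle is the quantitative choice of $\beta$ in terms of the lower spectral bound of $b_0$ and the verification that the resulting $T$ genuinely lies in $\LS(N)_+$ and is invertible in $\LS(N)$, so that the formula in $(2)$ makes literal sense in the locally measurable setting.
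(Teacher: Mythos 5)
Your proposal is correct and takes essentially the same route as the paper: verify $(2)\Rightarrow(1)$ via operator monotonicity of $f_\alpha$, $f_\beta$ and the computation $\phi(1/2)=F_{\alpha,\beta}(T)$ with $F_{\alpha,\beta}(t)=(\alpha t^2+1-\alpha)/(\alpha t^2+(2-\beta)(1-\alpha))$, then for $(1)\Rightarrow(2)$ and the ``Moreover'' part build the automorphism $\theta$ with $\theta(1/2)=b_0$ by taking $T=F_{\alpha,\beta}^{-1}(b_0)$ in the locally measurable functional calculus (your explicit $T^2=\tfrac{1-\alpha}{\alpha}(2c-1)(1-c)^{-1}$ is just this inverse written out) and compose with Theorem \ref{Jordan}. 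One small correction: for the local measurable invertibility of $1-\phi(1/2)$ you should invoke the elementary observation (via Lemma \ref{invertible}, applied to the order isomorphism $a\mapsto 1-\phi(1-a)$) that order isomorphisms of effect algebras preserve locally measurably invertible elements, rather than Theorem \ref{main}(4), since the latter is a summary statement whose proof in the paper passes through the present proposition.
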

\begin{proof}
Recall that the function
\[
f_{\alpha}(t)= \frac{t}{\alpha t+1-\alpha},\quad t\in\mathbb{R}_+
\]
is operator monotone.
From this, it is not difficult to deduce that, if $\phi$ is as in $(2)$, then $\phi$ is an order isomorphism from $E(M)$ onto $E(N)$. 
In $(2)$, we have $\phi(1/2) = f_{\beta}\left( (f_{\alpha}(T^2))^{-1/2} f_{\alpha}(T^2/2) (f_{\alpha}(T^2))^{-1/2}\right) = F_{\alpha, \beta}(T)$, where 
\[
F_{\alpha, \beta}(t) = \frac{\alpha t^2+1-\alpha}{\alpha t^2+(2-\beta)(1-\alpha)},\quad t\geq 0. 
\]
Note that $F_{\alpha, \beta}$ is strictly increasing and $F_{\alpha, \beta}([0, \infty)) = [(2-\beta)^{-1}, 1)$. 
Hence $\phi(1/2)$ is invertible in $N$. 

Suppose conversely that $\phi\colon E(M)\to E(N)$ is an order isomorphism and $\phi(1/2)$ is invertible in $N$. 
Take real numbers $\alpha, \beta$ with $0<\alpha<1$, $\beta<0$, $(2-\beta)^{-1}\leq \phi(1/2)$. 
Put $T := F_{\alpha, \beta}^{-1}(\phi(1/2))$. 
Note that $1-\phi(1/2)$ is locally measurably invertible in $N$ and thus $T$ is well defined. 
Moreover, $T$ is a locally measurable positive unbounded operator which is invertible in $\LS(N)$ and satisfies $F_{\alpha, \beta} (T) = \phi(1/2)$. 
Define an order automorphism $\psi$ on $E(N)$ by 
\[
\psi(b) = f_{\beta}\left( (f_{\alpha}(T^2))^{-1/2} f_{\alpha}(TbT) (f_{\alpha}(T^2))^{-1/2}\right),\quad b\in E(N).
\]
Then the order isomorphism $\psi^{-1}\circ\phi\colon E(M)\to E(N)$ satisfies $\psi^{-1}\circ \phi(1/2) = 1/2$. 
By Theorem \ref{Jordan}, $\psi^{-1}\circ\phi$ extends to a Jordan $^*$-isomorphism $J\colon M\to N$, and we have $\phi = \psi\circ J$ on $E(M)$. 

The same discussion also shows the last assertion of this proposition.
\end{proof}

\begin{remark}
Parallel to \cite[Theorem 2.3]{S17}, we can also prove that the conditions $(1)$ and $(2)$ in Proposition \ref{pq} are equivalent to:
\begin{enumerate}[$(1)$]
\item[(3)] There exist a Jordan $^*$-isomorphism $J\colon M\to N$, a negative real number $\alpha<0$ and a locally measurable positive operator $T\in \LS(N)$ which is invertible in $\LS(N)$ such that 
\[
\phi(a) = f_{\alpha} \left( (1+T^{-2})^{1/2} (1-(1+TJ(a)T)^{-1}) (1+T^{-2})^{1/2} \right)
\]
for every $a\in E(M)$. 
\end{enumerate}
We omit the proof because it can be done in a similar way. 
We do not use this fact in the rest of this paper. 
\end{remark}

Proposition \ref{pq} shows the following.

\begin{proposition}\label{characterization}
Let $M$ and $N$ be two von Neumann algebras and suppose $b\in E(N)$. 
If $M$ and $N$ are Jordan $^*$-isomorphic, then the following two conditions are equivalent: 
\begin{enumerate}[$(1)$]
\item Both $b$ and $1-b$ are locally measurably invertible in $N$.  
\item There exists an order isomorphism $\phi\colon E(M)\to E(N)$ such that $\phi(1/2) = b$. 
\end{enumerate}
\end{proposition}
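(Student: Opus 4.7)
My plan is to handle the two directions separately. The forward direction $(2)\Rightarrow(1)$ uses Lemma~\ref{invertible} to reformulate local measurable invertibility as a purely order-theoretic property, so that it transfers automatically along $\phi$. The converse direction $(1)\Rightarrow(2)$ builds on the last assertion of Proposition~\ref{pq} through a central decomposition, with the type II summand as the main obstacle.

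For $(2)\Rightarrow(1)$, I will observe that, by Lemma~\ref{invertible}(2), an element $a\in E(N)$ is locally measurably invertible iff no $c\in E(N)\setminus\{0\}$ has the property that every $x\in E(N)$ with $x\leq a$ and $x\leq c$ is zero; one can rescale any nonzero positive $c$ into $E(N)$, and $x\leq a\leq 1$ automatically forces $x\in E(N)$. The order-reversing involution $x\mapsto 1-x$ on $E(N)$ yields the corresponding order-theoretic characterization of $1-a$ being locally measurably invertible. Both conditions hold for $a=1/2$, which is literally invertible in $M$, and both are preserved by the order isomorphism $\phi$, hence they pass to $b=\phi(1/2)$ and $1-b$.

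For $(1)\Rightarrow(2)$, I identify $M$ with $N$ via the given Jordan $^*$-isomorphism and construct an order automorphism of $E(N)$ sending $1/2$ to $b$. The last assertion of Proposition~\ref{pq} supplies two base cases directly: when $b$ is invertible in $N$, take $\phi$ as furnished there; when $1-b$ is invertible in $N$, first produce $\phi'$ with $\phi'(1/2)=1-b$ and set $\phi(a):=1-\phi'(1-a)$, which is an order automorphism of $E(N)$ (the two occurrences of the order-reversing involution $x\mapsto 1-x$ cancel) with $\phi(1/2)=b$. To reduce the general case to these, I apply Lemma~\ref{invertible}(3) to both $b$ and $1-b$, obtaining two countable orthogonal families of central projections summing to $1$; their common refinement $\{r_k\}\subset\P(\Z(N))$, intersected with the type I/II/III decomposition of $N$, gives central summands on which, outside the type II part, $br_k$ and $(1-b)r_k$ are both invertible in $Nr_k$, and Lemma~\ref{lem2} assembles the resulting pieces.

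The main obstacle is the type II summand, where $br_k$ and $(1-b)r_k$ need only be invertible in $\mathcal{S}(Nr_k)$ --- the extreme case being a type II factor where $b$ has continuous spectrum touching both $0$ and $1$ and no further central refinement is available. My plan here is to extend the formula of Proposition~\ref{pq}(2) by allowing the auxiliary positive operator $T$ to be a genuinely unbounded element of $\LS(Nr_k)$ and by allowing the constants $\alpha,\beta$ to vary centrally within $\LS(\Z(Nr_k))$; the hypothesis that both $br_k$ and $(1-b)r_k$ are locally measurably invertible is precisely what is needed for $T:=F_{\alpha,\beta}^{-1}(br_k)$ to be well-defined, positive, and invertible in $\LS(Nr_k)$. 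The operator-monotonicity verifications from the proof of Proposition~\ref{pq} then carry over once every expression is interpreted in $\LS$, and gluing the resulting automorphisms over all $r_k$ produces the required $\phi$ on all of $E(N)$.
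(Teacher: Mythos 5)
Your direction $(2)\Rightarrow(1)$ is fine and is essentially the paper's (terse) argument: condition $(2)$ of Lemma \ref{invertible} can be phrased entirely inside $E(N)$ after rescaling, it is preserved by order isomorphisms, the flip $x\mapsto 1-x$ handles $1-b$, and $1/2$ satisfies it. The base cases of your $(1)\Rightarrow(2)$ ($b$ or $1-b$ invertible in $N$, via the last assertion of Proposition \ref{pq} and the flip) are also correct, and assembling automorphisms over a countable central partition is harmless. The genuine gap is exactly the step you flag as the crux: the type II summand, e.g.\ a II$_1$ factor with $b$ injective and $\operatorname{spec}(b)=[0,1]$ (then $b$ and $1-b$ are locally measurably invertible, since every affiliated operator is measurable, but neither is invertible in $N$). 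Your proposed fix --- unbounded $T$ and center-valued $\alpha,\beta$ --- cannot work there. Unbounded $T\in\LS(N)$ is already allowed in Proposition \ref{pq}(2), so that adds nothing; and in a factor the center is trivial, so ``central'' $\alpha,\beta$ are scalars, in which case $F_{\alpha,\beta}([0,\infty))=[(2-\beta)^{-1},1)$ forces $F_{\alpha,\beta}(T)\geq(2-\beta)^{-1}>0$. Hence $T:=F_{\alpha,\beta}^{-1}(b)$ is simply undefined when $\operatorname{spec}(b)$ accumulates at $0$; local measurable invertibility of $b$ is not ``precisely what is needed'' --- what is needed is invertibility of $b$ in $N$ itself. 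Structurally this is unavoidable: Proposition \ref{pq} shows that every map of that functional form has $\phi(1/2)$ invertible in $N$, so no single map of that shape, however the parameters are varied within the center, can send $1/2$ to a non-invertible $b$.

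What is missing is a composition trick, and that is how the paper argues (which also makes the central/type decomposition and the gluing unnecessary). Since $(1+b)/2\geq 1/2$ is invertible in $N$ and $1-(1+b)/2=(1-b)/2$ is locally measurably invertible, Proposition \ref{pq} gives an order automorphism $\phi_1$ of $E(N)$ with $\phi_1(1/2)=(1+b)/2\geq b$. Put $b_0:=\phi_1^{-1}(b)$; then $b_0\leq 1/2$, so $1-b_0$ is invertible in $N$, while $b_0$ is still locally measurably invertible (by the same order-theoretic characterization you used in the forward direction). A second application of Proposition \ref{pq} yields an order isomorphism $\phi_2\colon E(M)\to E(N)$ with $\phi_2(1/2)=1-b_0$, and then $\phi(a):=\phi_1\bigl(1-\phi_2(1-a)\bigr)$ is an order isomorphism with $\phi(1/2)=\phi_1(1-(1-b_0))=\phi_1(b_0)=b$. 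Two applications of your base case, threaded through the flip and a pullback by $\phi_1^{-1}$, cover exactly the type II factor situation your plan leaves open.
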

\begin{proof}
By Lemma \ref{invertible}, it is easy to see that $(2)\Rightarrow(1)$ holds. 

$(1)\Rightarrow(2)$ 
By Proposition \ref{pq}, there exists an order automorphism $\phi_1\colon E(N)\to E(N)$ that satisfies $\phi_1(1/2)\geq b$. 
Put $b_0:= \phi_1^{-1}(b)$. 
Then $b_0$ is a locally measurably invertible element in $N$ with $b_0\leq 1/2$.  Again by Proposition \ref{pq}, there exists an order isomorphism $\phi_2\colon E(M)\to E(N)$ with $\phi_2(1/2) = 1-b_0$. 
Then the order isomorphism $\phi\colon E(M)\to E(N)$ defined by $\phi(a) =\phi_1(1-\phi_2(1-a))$, $a\in E(M)$ satisfies the desired condition. 
\end{proof}

In type I$_1$ cases, we can completely understand the general form of order isomorphisms between effect algebras essentially by the measure theoretic idea of Cabello S\'anchez in \cite{CS}. 
Suppose that $\phi\colon E(M)\to E(N)$ is an order isomorphism between effect algebras of commutative von Neumann algebras. 
Then $\phi|_{\P(M)}$ is an orthoisomorphism from $\P(M)$ onto $\P(N)$. 
By Dye's theorem, there exists a Jordan $^*$-isomorphism (which is actually a $^*$-isomorphism because $M$ and $N$ are commutative) that extends $\phi|_{\P(M)}$. 

Thus it suffices to give, for a commutative von Neumann algebra $A$, the general form of order automorphisms on $E(A)$ that fix every projection. 
We can take some measure space $(X, \mu)$ such that $A$ is $^*$-isomorphic to $L^{\infty}(\mu)$. 
Hence the following proposition gives a complete characterization. 

\begin{proposition}\label{abel}
Let $(X, \mu)$ be a measure space. 
Put $E(L^{\infty}(\mu)):=\{f\in L^{\infty}(\mu)\mid 0\leq f\leq 1\}$ and $\P(L^{\infty}(\mu)):=\{p\in L^{\infty}(\mu)\mid p(x)\in \{0,\, 1\}\text{ a.e. }x\in X\}$. 
Suppose that $\phi\colon E(L^{\infty}(\mu))\to E(L^{\infty}(\mu))$ is an order isomorphism that satisfies $\phi(p)=p$ for every $p\in \P(L^{\infty}(\mu))$. 
Then there exists a measurable mapping $\tau\colon X\times [0, 1] \to [0, 1]$ that satisfies the following conditions. 
\begin{enumerate}[$(1)$]
\item For every $x\in X$, the mapping $\tau(x, \cdot)$ is a continuous monotone increasing bijection on $[0, 1]$. 
\item For every $f\in E(L^{\infty}(\mu))$,  we have $(\phi(f))(x) = \tau(x, f(x))$ a.e.\ $x\in X$. 
\end{enumerate}
Conversely, if a measurable mapping $\tau\colon X\times [0, 1] \to [0, 1]$ satisfies $(1)$, 
then the mapping $\phi\colon E(L^{\infty}(\mu))\to E(L^{\infty}(\mu))$ that is determined by the equation in $(2)$ is an order automorphism on $E(L^{\infty}(\mu))$. 
\end{proposition}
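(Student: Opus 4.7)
My plan is to prove the converse directly and reduce the forward direction to four steps. The converse is immediate: given $\tau$ as in (1), the assignment $f\mapsto\tau(\cdot,f(\cdot))$ sends $E(L^\infty(\mu))$ into itself, has inverse induced by $\tau(x,\cdot)^{-1}$, and preserves the a.e.\ pointwise order since each $\tau(x,\cdot)$ is monotone. For the forward direction, the first step is \emph{locality}: for every measurable $E\subset X$ and every $f\in E(L^\infty(\mu))$, $f\chi_E$ is the lattice infimum of $f$ and $\chi_E$ in $E(L^\infty(\mu))$, so $\phi(f\chi_E)=\phi(f)\wedge\phi(\chi_E)=\phi(f)\chi_E$ by the hypothesis $\phi(\chi_E)=\chi_E$ together with the preservation of infima by order isomorphisms. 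Hence if $f=g$ a.e.\ on $E$ then $\phi(f)=\phi(g)$ a.e.\ on $E$.

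Second, I construct the candidate $\tau$ from the action of $\phi$ on constant functions. For each rational $c\in[0,1]$ I fix a measurable representative $g_c$ of $\phi(c\cdot 1_X)$. Since $\phi$ preserves countable suprema and infima in the Dedekind complete lattice $E(L^\infty(\mu))$, the identities $c\cdot 1_X=\sup\{c'\cdot 1_X:c'\in\mathbb{Q},\ c'<c\}=\inf\{c''\cdot 1_X:c''\in\mathbb{Q},\ c''>c\}$ pass to $g_c$ off a single null set $N_0$; on $X\setminus N_0$ the family $(g_c(x))_{c\in\mathbb{Q}\cap[0,1]}$ is monotone with $g_0(x)=0$, $g_1(x)=1$ and no one-sided gaps at rationals. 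I set $\tau(x,c):=\sup\{g_{c'}(x):c'\in\mathbb{Q}\cap[0,c]\}$ on $X\setminus N_0$ and $\tau(x,c):=c$ on $N_0$; joint measurability follows from the countable construction.

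Third, I verify (2). For a rational-valued simple function $f=\sum_i c_i\chi_{E_i}$ with disjoint $E_i$ partitioning $X$, locality yields $\phi(f)\chi_{E_i}=\phi(c_i\cdot 1_X)\chi_{E_i}=g_{c_i}\chi_{E_i}$, so $(\phi(f))(x)=\tau(x,f(x))$ a.e. For general $f$, let $f_n^-$ be the dyadic round-down of $f$ and $f_n^+:=f_n^-+2^{-n}$ capped at $1$; then $f_n^-\uparrow f$, $f_n^+\downarrow f$, and preservation of $\sup$ and $\inf$ by $\phi$ gives
\[
(\phi(f))(x)=\sup_n g_{f_n^-(x)}(x)=\inf_n g_{f_n^+(x)}(x)\quad\text{a.e.}
\]
Monotonicity of the rational family $(g_c)$ identifies both sides with $\tau(x,f(x))$ on $X\setminus N_0$, establishing (2).

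For (1), I apply the same construction to $\phi^{-1}$ to obtain $\sigma\colon X\times[0,1]\to[0,1]$ satisfying $\phi^{-1}(h)(x)=\sigma(x,h(x))$ a.e. Composing (2) with its analogue for $\phi^{-1}$ and specializing to $f=c\cdot 1_X$ at each rational $c$ yields, on a further full measure set $X_1$, the identities $\sigma(x,\tau(x,c))=c$ and $\tau(x,\sigma(x,c))=c$ for all $c\in\mathbb{Q}\cap[0,1]$. Monotonicity then forces $\tau(x,\cdot)$ to be strictly increasing with image containing $\mathbb{Q}\cap[0,1]$; any jump of a monotone $[0,1]$-valued function would skip an open interval and hence miss rationals, so $\tau(x,\cdot)$ is continuous and surjective, hence a bijection on $[0,1]$ for every $x\in X_1$. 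I expect the main obstacle to be the global bookkeeping of null sets, since (1) must hold for \emph{every} $x$ while the above arguments furnish it only on $X_1$; I resolve this by resetting $\tau(x,c):=c$ on $X\setminus X_1$, which restores (1) pointwise and leaves (2) intact because (2) is an a.e.\ statement.
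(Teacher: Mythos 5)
Your forward direction is sound and is in fact more detailed than the paper, which omits that half entirely and defers it to the cited measure-theoretic method of Cabello S\'anchez. Your chain of steps checks out: $f\chi_E=f\wedge\chi_E$ in $E(L^\infty(\mu))$ gives locality since $\phi$ fixes $\chi_E$ and preserves existing infima; the rational family $g_c$ is monotone with $g_0=0$, $g_1=1$ and no one-sided gaps off one null set because countable lattice suprema/infima in $E(L^\infty(\mu))$ are a.e.\ pointwise ones (note that only countable families are used, so the Dedekind completeness you invoke is not actually needed and the argument works for any measure space); the two-sided dyadic sandwich $\sup_n g_{f_n^-(x)}(x)\leq \tau(x,f(x))\leq \inf_n g_{f_n^+(x)}(x)$ correctly pins down (2) using monotonicity alone; and composing with the map $\sigma$ built from $\phi^{-1}$ forces $\tau(x,\cdot)$ to be a strictly increasing continuous bijection on a full-measure set, after which resetting $\tau(x,c):=c$ off that set legitimately repairs (1) pointwise without affecting the a.e.\ statement (2).

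The one genuine gap is in the converse, which you call immediate. To conclude that $f\mapsto\tau(\cdot,f(\cdot))$ is an order \emph{automorphism} of $E(L^\infty(\mu))$ you must show it is surjective, i.e.\ that for every $h\in E(L^\infty(\mu))$ the function $x\mapsto\tau(x,\cdot)^{-1}(h(x))$ is again a measurable (class of a) function. The hypothesis gives joint measurability of $\tau$, but not of $\tilde{\tau}(x,c):=\tau(x,\cdot)^{-1}(c)$, and this is precisely the point the paper's proof verifies: it checks that $\{(x,c)\mid \tilde{\tau}(x,c)>d\}=\{(x,c)\mid \tau(x,d)<c\}=\bigcup_{c_0\in\mathbb{Q}}\{(x,c)\mid \tau(x,d)<c_0\leq c\}$ is measurable, so that $\tilde{\tau}$ is measurable and induces the inverse map. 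Alternatively, given $h$ you could set $g(x):=\sup\{c\in\mathbb{Q}\cap[0,1]\mid \tau(x,c)\leq h(x)\}$, which is measurable as a countable supremum of measurable functions and satisfies $\tau(x,g(x))=h(x)$ by continuity and monotonicity of $\tau(x,\cdot)$. Either verification is short, but as written the phrase ``has inverse induced by $\tau(x,\cdot)^{-1}$'' assumes exactly the measurability that needs to be proved; supply it and your argument is complete.
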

\begin{proof}
The first half can be proved by the same method as \cite[Lemma 9]{C} so we omit its proof. 
Suppose that a measurable function $\tau\colon X\times [0, 1] \to [0, 1]$ satisfies the condition $(1)$. 
Define $\tilde{\tau}\colon X\times [0, 1] \to [0, 1]$ by 
$\tilde{\tau} (x, c) = \tau(x, \cdot)^{-1}(c)$, $(x, c)\in X\times [0, 1]$. 
Then $\tilde{\tau}$ is a measurable mapping. 
Indeed, for every $d\in \mathbb{R}$, the set
\[
\begin{split}
\{(x, c)\in X\times [0, 1]\mid \tilde{\tau}(x, c) > d\} 
&= \{(x, c)\in X\times [0, 1]\mid \tau(x, d)< c\}\\
&= \bigcup_{c_0\in \mathbb{Q}}\{(x, c)\in X\times [0, 1]\mid \tau(x, d) <c_0\leq c\}
\end{split}
\]
is a measurable set in $X\times [0, 1]$.
Thus the second half follows by the same discussion as in \cite[Theorem 3]{C}.  
\end{proof}

\section{Order isomorphisms between operator intervals}\label{interval}
In this section, we use the results in the preceding section to investigate order isomorphisms between general operator intervals in von Neumann algebras. 
The idea of this section is motivated by \cite[Theorem 3.1]{S18} due to \v{S}emrl.

For a von Neumann algebra $M$, one can show that 
the mapping $x\mapsto (1-x)^{-1} -1$ is an order isomorphism 
from the collection $\{x\in E(M)\mid 1-x \text{ is invertible in }\LS(M)\}$ onto $\LS(M)_+ := \{\text{positive elements in }\LS(M)\}$.
The inverse mapping is $X\mapsto 1-(1+X)^{-1}$, $X\in \LS(M)_+$. 

\begin{proposition}\label{linear}
Let $M$ and $N$ be von Neumann algebras without type I$_1$ direct summands. 
Suppose that $\phi\colon E(M)\to E(N)$ is an order isomorphism. 
Define an order isomorphism $\Phi\colon \LS(M)_+\to \LS(N)_+$ by 
\[
\Phi(X) := \left(1-\phi(1- (1+X)^{-1})\right)^{-1} -1,\quad X\in \LS(M)_+. 
\]
Then there exist a Jordan $^*$-isomorphism $J\colon M\to N$ and an element $B\in \LS(N)_+$ which is invertible in $\LS(N)$ such that 
\[
\Phi(X) = BJ(X)B,\quad X\in  \LS(M)_+. 
\]
(Note that $J$ can be canonically extended to a bijection from $\LS(M)$ onto $\LS(N)$ and this extension preserves positivity, Jordan products, inverses, etc.)

In other words, 
for every $x\in E(M)$ with $1-x$ invertible in $\LS(M)$, 
putting $X= (1-x)^{-1} -1\in \LS(M)_+$, we have
\[
\phi(x) = 1-(\Phi(X)+1)^{-1} = 1- \left(BJ((1-x)^{-1} -1)B+1\right)^{-1}. 
\]
\end{proposition}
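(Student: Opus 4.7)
The plan is to check that $\Phi$ is a well-defined order isomorphism, construct an order automorphism $\theta$ of $E(N)$ from $B := \Phi(1)^{1/2}$, and apply Theorem~\ref{Jordan} to $\theta\circ\phi$ to produce $J$. For well-definedness, I use the statement just before the proposition that $x \mapsto (1-x)^{-1}-1$ is an order isomorphism from $\{x\in E(M) : 1-x$ is invertible in $\LS(M)\}$ onto $\LS(M)_+$. By the remark in the introduction based on Lemma~\ref{invertible}, order isomorphisms between effect algebras preserve local measurable invertibility; applied to $\phi$ itself and to the companion order isomorphism $y\mapsto 1-\phi(1-y)$, this shows $\phi$ restricts to an order isomorphism between the subsets of $E(M)$ and $E(N)$ consisting of elements $x$ with $1-x$ locally measurably invertible. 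Composing with the Cayley transforms yields $\Phi$. Moreover, $\Phi(1) = (1-\phi(1/2))^{-1}-1 = \phi(1/2)(1-\phi(1/2))^{-1}$ is a commuting product of two positive locally measurably invertible elements (using Theorem~\ref{main}(4)), so $B := \Phi(1)^{1/2}\in\LS(N)_+$ is well-defined and locally measurably invertible.

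The key step is to construct an order automorphism $\theta$ of $E(N)$ determined by
\[
\theta(b) = 1 - \bigl(1 + B^{-1}((1-b)^{-1}-1)B^{-1}\bigr)^{-1}
\]
whenever $1-b$ is locally measurably invertible. On this subset the formula corresponds under the Cayley transform to $Y\mapsto B^{-1}YB^{-1}$ on $\LS(N)_+$, hence is an order-preserving bijection of the subset onto itself with inverse obtained by swapping $B$ and $B^{-1}$. To extend to all of $E(N)$, set $\theta(b) := \sup_n \theta((1-n^{-1})b)$; note $(1-n^{-1})b$ lies in the above subset because $1-(1-n^{-1})b \geq n^{-1}\cdot 1$, and the sequence $\theta((1-n^{-1})b)$ is order-increasing and bounded above by $1$. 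The formula is normal on the subset because each of the operations $b\mapsto b(1-b)^{-1}$, $Y\mapsto B^{-1}YB^{-1}$ and $Y\mapsto 1-(1+Y)^{-1}$ preserves monotone suprema. Defining the would-be inverse $\theta^{-1}$ by the analogous extension with $B$ and $B^{-1}$ swapped, one checks that $\theta\circ\theta^{-1}$ is the identity on $E(N)$, so $\theta$ is indeed an order automorphism of $E(N)$.

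It remains to unwind. Set $\phi_0 := \theta\circ\phi$, an order isomorphism $E(M)\to E(N)$. Using that $1-\phi(1/2)$ is locally measurably invertible and $\Phi(1) = B^2$,
\[
\phi_0(1/2) = \theta(\phi(1/2)) = 1-\bigl(1+B^{-1}\Phi(1)B^{-1}\bigr)^{-1} = 1-(1+1)^{-1} = 1/2.
\]
Since $M$ has no type I$_1$ direct summand, Theorem~\ref{Jordan} produces a Jordan $^*$-isomorphism $J\colon M\to N$ with $\phi_0 = J|_{E(M)}$, so $\phi = \theta^{-1}\circ J|_{E(M)}$. For $X\in\LS(M)_+$ and $a := 1-(1+X)^{-1}$, the canonical extension of $J$ to $\LS$ (which preserves inverses) yields $J(a) = 1-(1+J(X))^{-1}$; since $1-J(a) = (1+J(X))^{-1}$ is locally measurably invertible, the formula for $\theta^{-1}$ gives $\phi(a) = \theta^{-1}(J(a)) = 1-(1+BJ(X)B)^{-1}$, whence $\Phi(X) = (1-\phi(a))^{-1}-1 = BJ(X)B$. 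The main technical obstacle is the second step, the extension and verification of $\theta$ on all of $E(N)$; once that is in hand, the conclusion follows from the Cayley correspondence and Theorem~\ref{Jordan}.
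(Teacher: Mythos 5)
Your overall architecture is attractive and genuinely different from the paper's: instead of factoring $\phi$ into explicit generators (the route the paper takes via Propositions \ref{pq} and \ref{characterization}, plus a closure-under-composition computation and a case-by-case evaluation of $\Phi$), you want to build a single order automorphism $\theta$ of $E(N)$ whose Cayley transform is $Y\mapsto B^{-1}YB^{-1}$ with $B=\Phi(1)^{1/2}$, normalize $\phi(1/2)$ to $1/2$, and quote Theorem \ref{Jordan}. Steps other than the construction of $\theta$ are fine: $\phi_0(1/2)=1/2$ is correctly verified, and the final unwinding via the canonical extension of $J$ to $\LS$ is correct.

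The genuine gap is exactly at the central step, the existence of $\theta$ as an order automorphism of all of $E(N)$. Your formula only makes sense on the subset $S=\{b\in E(N): 1-b \text{ invertible in }\LS(N)\}$, and the extension $\theta(b):=\sup_n\theta((1-n^{-1})b)$ is not justified: (i) consistency of the extension with the formula on $S$, and more seriously (ii) the claim that $\theta\circ\theta^{-1}=\mathrm{id}$ on $E(N)$, both require normality-type statements that your phrase ``normal on the subset'' does not deliver. Concretely, to get $\theta(\theta^{-1}(c))\leq c$ for $c$ with $1-c$ not locally measurably invertible, one needs: if $s\in S$ and $s\leq\sup_m t_m$ for an increasing sequence $(t_m)\subset S$ whose supremum lies \emph{outside} $S$, then $\theta(s)\leq\sup_m\theta(t_m)$. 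Monotonicity of the extension only yields the opposite inequalities $\theta^{-1}(\theta(b))\geq b$ and $\theta(\theta^{-1}(c))\geq c$, and since no element of $S$ dominates a $b\notin S$ (if $s\geq b$ and $1-s$ is locally measurably invertible, so is $1-b$), there is no approximation from above to close the argument; note also that $E(N)$ is not a lattice, so the obvious interpolation tricks are unavailable, and suprema computed through the unbounded $\LS$-Cayley maps need separate justification. What you are asserting here is essentially the surjectivity statement of Proposition \ref{last} (existence of an $E(N)$-automorphism with prescribed Cayley transform $Y\mapsto B^{-1}YB^{-1}$ for an arbitrary positive $B$ invertible in $\LS(N)$), which the paper obtains only as a consequence of Proposition \ref{linear} together with Propositions \ref{pq} and \ref{characterization}; the paper sidesteps the extension problem entirely by writing its normalizing automorphisms with the globally defined, bounded operator monotone functions $f_\alpha$, $f_\beta$ (and the flip $a\mapsto 1-a$). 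To repair your argument you would need either to prove the normality/consistency claims above in $\LS(N)$, or to replace $\theta$ by a globally defined formula (e.g.\ of the type used in Proposition \ref{pq}) and check it has the required Cayley transform on $S$.
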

\begin{proof}
Let $M_i$, $i=1, 2, 3$ be von Neumann algebras, $J_1\colon M_1\to M_2$, $J_2\colon M_2\to M_3$ be Jordan $^*$-isomorphisms and $B_i\in \LS(M_i)_+$ be invertible in $\LS(M_i)$, $i=2, 3$. 
Suppose that  the order isomorphisms $\Phi_i\colon \LS(M_i)_+\to \LS(M_{i+1})_+$, $i=1, 2$ are defined by
\[
\Phi_i(X) = B_{i+1}J_i(X)B_{i+1},\quad X\in  \LS(M_i)_+. 
\] 
Then, for $X\in  \LS(M_1)_+$, we have 
\[
\begin{split}
\Phi_2\circ \Phi_1(X) &= \Phi_2(B_2J_1(X)B_2)\\
&= B_3 J_2(B_2J_1(X)B_2) B_3\\
&= B_3 J_2(B_2) J_2(J_1 (X)) J_2(B_2) B_3\\
&= \lvert J_2(B_2)B_3 \rvert u^* J_2( J_1 (X)) u \lvert J_2(B_2)B_3 \rvert,
\end{split}
\]
where $J_2(B_2)B_3 = u \lvert J_2(B_2)B_3\rvert $ is the polar decomposition. 
Then $u\in \U(M_3)$ and thus the mapping $x\mapsto u^* J_2( J_1 (x)) u$ is a Jordan $^*$-isomorphism from $M_1$ onto $M_3$. 
In addition, $\lvert J_2(B_2)B_3 \rvert\in \LS(M_3)_+$ is invertible in $\LS(M_3)$.  

Therefore, by the discussion in Propositions \ref{pq} and \ref{characterization}, it suffices to consider the case
\begin{itemize}
\item $\phi = J|_{E(M)}$ for some Jordan $^*$-isomorphism $J\colon M\to N$, 
\item $M=N$ and $\phi(a) =f_{\beta}(a)$ for some real number $\beta<0$, 
\item $M=N$ and $\phi(a) = (f_{\alpha}(T^2))^{-1/2} f_{\alpha}(TaT) (f_{\alpha}(T^2))^{-1/2}$ for some $0<\alpha<1$ and a locally measurable positive operator $T\in \LS(N)$ that is invertible in $\LS(N)$, and
\item $M=N$ and $\phi$ is defined by $a\mapsto 1-\psi(1-a)$, $a\in E(M)$ when $\psi$ satisfies the conclusion of this proposition.
\end{itemize}

In the first case, we have 
\[
\begin{split}
\Phi(X) &= \left(1-J(1- (1+X)^{-1})\right)^{-1} -1\\
&= J\left((1-(1- (1+X)^{-1})\right)^{-1} -1\\
&= J(1+X) -1 = J(X)
\end{split}
\]
for every $X\in  \LS(M)_+$. 

In the second case, we obtain  
\[
\begin{split}
\Phi(X) &= \left(1-f_{\beta}(1- (1+X)^{-1})\right)^{-1} -1\\
&= \left(1-X(X + (1-\beta))^{-1}\right)^{-1} -1\\
&= \left((1-\beta)(X + (1-\beta))^{-1}\right)^{-1} -1 = (1-\beta)^{-1}X 
\end{split}
\]
for every $X\in  \LS(M)_+$. 

The third case is a little complicated. 
Let $X\in  \LS(M)_+$ be an invertible element in $\LS(M)$. 
By the equality
\[
\Phi(X) = \left(1-\phi(1- (1+X)^{-1})\right)^{-1} -1, 
\]
we have 
\[
\Phi(X)(\Phi(X)+1)^{-1} = \phi(1- (1+X)^{-1}).
\]
Take inverses of both sides to obtain
\[
\begin{split}
&\quad 1 + \Phi(X)^{-1}\\
&= \phi(1- (1+X)^{-1})^{-1}\\
&= f_{\alpha}(T^2)^{1/2} f_{\alpha}\left(T(1- (1+X)^{-1})T\right)^{-1} f_{\alpha}(T^2)^{1/2}\\
&= f_{\alpha}(T^2)^{1/2} \left(\alpha + (1-\alpha)\left(T(1- (1+X)^{-1})T\right)^{-1}\right) f_{\alpha}(T^2)^{1/2}\\
&= \alpha f_{\alpha}(T^2) + (1-\alpha) f_{\alpha}(T^2)^{1/2} T^{-1} (X^{-1} + 1) T^{-1} f_{\alpha}(T^2)^{1/2}\\
&= \alpha f_{\alpha}(T^2) + (1-\alpha) T^{-2} f_{\alpha}(T^2) + (1-\alpha) f_{\alpha}(T^2)^{1/2} T^{-1} X^{-1} T^{-1} f_{\alpha}(T^2)^{1/2}\\
&= f_{\alpha}(T^2)^{-1} f_{\alpha}(T^2) + (1-\alpha) f_{\alpha}(T^2)^{1/2} T^{-1} X^{-1} T^{-1} f_{\alpha}(T^2)^{1/2}\\
&= 1+ (1-\alpha) f_{\alpha}(T^2)^{1/2} T^{-1} X^{-1} T^{-1} f_{\alpha}(T^2)^{1/2}.
\end{split}
\]
Subtracting $1$ from both sides and taking inverses, we have
\[
\Phi(X) = \frac{1}{1-\alpha}  f_{\alpha}(T^2)^{-1/2} TXT f_{\alpha}(T^2)^{-1/2}. 
\]
Since $\Phi$ is an order isomorphism, the same equality holds for all $X\in  \LS(M)_+$. 

We consider the fourth case. 
Thus we suppose that $\psi$ satisfies the conclusion. 
Define $\Psi\colon \LS(M)_+\to \LS(M)_+$ by 
\[
\Psi(X) := \left(1-\psi(1- (1+X)^{-1})\right)^{-1} -1,\quad X\in \LS(M)_+. 
\]
Take $J_0$ and $B_0$ such that $\Psi(X) = B_0J_0(X)B_0$ for any  $X\in \LS(M)_+$. 
Consider the following composition of mappings for $X\in \LS(M)_+$ that is invertible in $\LS(M)$. 
\[
\begin{split}
X&\mapsto 1-(1+X)^{-1}\\
&\mapsto 1-(1-(1+X)^{-1}) = (1+X)^{-1}\\
&\mapsto (1-(1+X)^{-1})^{-1}-1 = X^{-1}=: \omega(X). 
\end{split}
\]
Then 
\[
\Phi (X) = {\omega}^{-1}\circ \Psi \circ \omega (X) = {\omega}^{-1}\circ \Psi (X^{-1}) = {\omega}^{-1} (B_0J_0(X^{-1})B_0) = B_0^{-1} J_0(X) B_0^{-1}
\]
for such an $X$.
Since $\Phi$ is an order isomorphism, the same equation holds for every $X\in \LS(M)_+$
\end{proof}

\begin{corollary}\label{affine}
Let $M$ and $N$ be two von Neumann algebras without type I$_1$ direct summands. 
Suppose that $K\subset M_{sa}$ and $L\subset N_{sa}$ are operator intervals and that $\phi\colon K \to L$ is an order isomorphism. 
Suppose further that there exist two operators $a_1, a_2\in K$ such that 
$a_2-a_1$ is locally measurably invertible in $M$ and $\phi((a_1+a_2)/2) = (\phi(a_1) + \phi(a_2))/2$. 
Then $\phi$ is an affine mapping on $K$. 
\end{corollary}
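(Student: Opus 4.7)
The proof splits into two stages: first, establish affinity of $\phi$ on the closed sub-interval $M_{[a_1,a_2]}\subset K$ via Theorem~\ref{Jordan}, and then propagate this affinity to the whole of $K$ using Proposition~\ref{linear} together with Lemma~\ref{normalize}.

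\textbf{Stage 1 (affinity on $M_{[a_1, a_2]}$).} Local measurable invertibility of $a_2-a_1$ forces its support projection in $M$ to be $1_M$: otherwise, the orthocomplement of a proper support would be a nonzero element violating condition~(2) of Lemma~\ref{invertible}. Hence Lemma~\ref{normalize}(3) supplies an affine order isomorphism
\[
\sigma\colon E(M)\to M_{[a_1,a_2]},\quad \sigma(x)=(a_2-a_1)^{1/2}x(a_2-a_1)^{1/2}+a_1.
\]
Letting $q\in\P(N)$ be the support projection of $\phi(a_2)-\phi(a_1)$, one obtains analogously an affine order isomorphism $\sigma'\colon E(qNq)\to N_{[\phi(a_1),\phi(a_2)]}$. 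The conjugate map $\psi:=(\sigma')^{-1}\circ\phi\circ\sigma\colon E(M)\to E(qNq)$ is an order isomorphism, and because $\sigma, \sigma'$ are affine the midpoint hypothesis translates into $\psi(1/2)=1/2$. Since $M$ has no type~I$_1$ direct summand, Theorem~\ref{Jordan} extends $\psi$ to a Jordan $^*$-isomorphism $J_0\colon M\to qNq$. Consequently $\phi|_{M_{[a_1,a_2]}}=\sigma'\circ J_0\circ\sigma^{-1}$ is affine, with the explicit form $\phi(a)=TJ_0(a)T+d$ for suitable $T\in\LS(qNq)_+$ and $d\in N_{sa}$.

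\textbf{Stage 2 (extension to $K$).} By Lemma~\ref{normalize}, $K$ is order-isomorphic to one of the five canonical operator intervals. In the closed bounded case the reduction is affine and lands in some $E(M')$, where Proposition~\ref{linear} parametrizes $\phi$ via a Jordan $^*$-isomorphism $J$ and a positive $\LS$-invertible operator $B$ (through the Cayley-type transform $x\mapsto(1-x)^{-1}-1$). Matching this general parametrization against the explicit affine representation on $M_{[a_1,a_2]}$ from Stage~1 forces $B$ to collapse to $1$ and $J$ to coincide with $J_0$, so $\phi$ is globally affine on $K$. For the unbounded canonical forms ($M_{sa}$, $M_{(0,\infty)}$, $M_{(-\infty,0]}$, $M_{[0,\infty)}$), I exhaust $K$ by a nested family of closed bounded sub-intervals $M_{[b_1^{(n)}, b_2^{(n)}]}$ each containing $M_{[a_1, a_2]}$ and with $b_2^{(n)}-b_1^{(n)}$ locally measurably invertible; apply the bounded case to each $\phi|_{M_{[b_1^{(n)}, b_2^{(n)}]}}$, noting that the affine form already established on the smaller interval supplies the midpoint condition required to invoke Stage~1 on the larger.

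\textbf{Main obstacle.} The heart of the argument is the rigidity step in Stage~2: turning the single midpoint datum on $(a_1,a_2)$, after being converted to affinity on $M_{[a_1, a_2]}$, into global affinity on $K$. Proposition~\ref{linear}'s representation $\Phi(X)=BJ(X)B$ a priori accommodates many non-affine order isomorphisms of effect algebras, and one must argue that agreement with an affine formula on a ``fat'' sub-interval whose difference element is locally measurably invertible eliminates every nontrivial $B$. A further technical subtlety is that the formula $a\mapsto TJ_0(a)T+d$ in general takes values only in $\LS(N)$, so reinterpreting it inside $L\subset N_{sa}$ throughout the exhaustion of the unbounded case requires care.
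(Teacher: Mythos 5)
Your two-stage architecture is in fact the same skeleton as the paper's proof (Theorem \ref{Jordan} applied to the normalized restriction to $M_{[a_1,a_2]}$ to convert the single midpoint hypothesis into affinity there, then Proposition \ref{linear} on a larger closed bounded sub-interval), but the decisive step is missing: you assert that matching the parametrization $\Phi(X)=BJ(X)B$ against the affine representation on $M_{[a_1,a_2]}$ ``forces $B$ to collapse to $1$,'' and your closing paragraph concedes that you do not have an argument for this. That rigidity statement is exactly the content of the corollary beyond the two quoted results, so as written the proposal has a genuine gap. The paper closes it by a short explicit computation, and it needs much less than agreement with an affine formula on the whole sub-interval: after the affine normalization of $M_{[a_1,a]}$ (with $a\geq a_2$ in $K$) onto $E(M)$, so that $a_1=0$, $a=1$, $\phi(0)=0$, $\phi(1)=1$, pick a single $a_3\leq a_2$ with both $a_3$ and $1-a_3$ invertible in $\LS(M)$ (for instance $a_3=a_2/2$); affinity on $M_{[0,a_2]}$ supplies the one identity $\phi(a_3/2)=\phi(a_3)/2$. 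Substituting this into the formula $\phi(x)=1-\left(BJ((1-x)^{-1}-1)B+1\right)^{-1}$ of Proposition \ref{linear}, inverting both sides, and using the Jordan identities $J((1-a_3/2)^{-1}-1)^{-1}=2J(a_3)^{-1}-1$ and $J((1-a_3)^{-1}-1)^{-1}=J(a_3)^{-1}-1$, one gets $2J(a_3)^{-1}-1=B^2+2\left(J(a_3)^{-1}-1\right)$, hence $B^2=1$ and, by positivity, $B=1$. Then $\phi(x)=J(x)$ for all $x$ with $1-x$ invertible in $\LS(M)$, hence for all $x\in E(M)$ since $\phi$ is an order isomorphism; in particular the ``technical subtlety'' you raise about $\LS$-valued affine formulas disappears, because the representation degenerates to a genuine Jordan $^*$-isomorphism with values in $N$.

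Two smaller points. In Stage 1 the composition $\sigma'\circ J_0\circ\sigma^{-1}$ is of the form $a\mapsto XJ_0(a)X^*+d$ with $X=(\phi(a_2)-\phi(a_1))^{1/2}J_0((a_2-a_1)^{-1/2})\in\LS(qNq)$, which is not positive; to write it as $TJ_0'(a)T+d$ with $T\geq0$ you must twist $J_0$ by the unitary from the polar decomposition of $X$, as in the composition argument inside the proof of Proposition \ref{linear} (harmless, and actually unnecessary: all Stage 1 has to deliver is the midpoint identity used above). Your exhaustion of the unbounded canonical forms by closed bounded sub-intervals containing $M_{[a_1,a_2]}$ is essentially the paper's reduction, which proves affinity on $M_{[a_1,a]}$ for every admissible $a$ and then on $M_{[a_0,a]}$; this part is fine, but it only works once the rigidity computation is in place, since it is invoked anew on each larger interval.
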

\begin{proof}
It suffices to show that, 
for every $a_0, a\in K$ with $a_0\leq a_1$ and $a_2\leq a$, $\phi$ is affine on the operator interval $M_{[a_0, a]}$. 
In the rest of this proof, we show that $\phi$ is affine on $M_{[a_1, a]}$. 
(Then a similar discussion shows that $\phi$ is affine on $M_{[a_0, a]}$.)

By normalization as in the proof of Lemma \ref{normalize}(3), we may assume that $a_1=0, a = 1\in M$ and $\phi(0)=0, \phi(1) = 1 \in N$. 
By Theorem \ref{Jordan}, $\phi$ is affine on $M_{[0, a_2]}$. 
Take an element $a_3\in E(M)$ with the property that $a_3 \leq a_2$ and both $a_3$ and $1-a_3$ are invertible in $\LS(M)$. 
Then $\phi(a_3/2)= \phi(a_3)/2$. 
By the preceding proposition, there exist a Jordan $^*$-isomorphism $J\colon M\to N$ and an element $B\in \LS(N)_+$ which is invertible in $\LS(N)$ such that
\[
\phi(x) = 1- \left(BJ((1-x)^{-1} -1)B+1\right)^{-1}
\]
for every $x\in E(M)$ with $1-x$ invertible in $\LS(M)$. 
By the equation $\phi(a_3/2)= \phi(a_3)/2$, we have 
\[
1- \left(BJ((1-a_3/2)^{-1} -1)B+1\right)^{-1} = \frac{1}{2}\left(1- \left(BJ((1-a_3)^{-1} -1)B+1\right)^{-1}\right)
\]
Take inverses of both sides. Then
\[
1+ B^{-1} J((1-a_3/2)^{-1} -1)^{-1}B^{-1} = 2 (1+ B^{-1} J((1-a_3)^{-1} -1)^{-1}B^{-1}). 
\]
We can easily see 
\[
J((1-a_3/2)^{-1} -1)^{-1} = J(2a_3^{-1} -1) = 2J(a_3)^{-1} -1
\]
and
\[
J((1-a_3)^{-1} -1)^{-1} = J(a_3^{-1} -1) = J(a_3)^{-1} -1. 
\]
By these equalities, we obtain 
\[
1+ B^{-1}  (2J(a_3)^{-1} -1)B^{-1} = 2 (1+ B^{-1} (J(a_3)^{-1} -1)B^{-1}). 
\]
Subtracting $1$ and multiplying $B$ from both sides, we have 
\[
2J(a_3)^{-1} -1 = B^2 +2(J(a_3)^{-1} -1). 
\]
Thus $B^2=1$ and since $B$ is positive we finally obtain $B=1$. 
It follows that 
\[
\phi(x) = 1- \left(J((1-x)^{-1} -1)+1\right)^{-1} = J(x)
\]
for every $x\in E(M)$ with $1-x$ invertible in $\LS(M)$. 
Since $\phi$ is an order isomorphism, the same equation holds for every $x\in E(M)$.
\end{proof}

We are ready to consider general order isomorphisms between operator intervals. 
By the following theorem combined with Propositions \ref{pq},  \ref{characterization} and arguments in Section \ref{pre1}, we can completely understand the general form of order isomorphisms between operator intervals in the setting of von Neumann algebras. 

\begin{theorem}\label{rest}
Let $M$ and $N$ be von Neumann algebras without type I$_1$ direct summands. 
\begin{enumerate}[$(1)$]
\item If $\phi \colon M_{sa}\to N_{sa}$  is an order isomorphism, then there exist a Jordan $^*$-isomorphism $J\colon M\to N$ and elements $x\in N^{-1}$, $b\in N_{sa}$ that satisfy $\phi(a) = xJ(a)x^* + b$ for every $a\in M_{sa}$. 
\item If $\phi \colon M_+\to N_+$ or  $M_{(-\infty, 0]}\to N_{(-\infty, 0]}$ or $M_{(0, \infty)}\to N_{(0, \infty)}$ is an order isomorphism, then there exist a Jordan $^*$-isomorphism $J\colon M\to N$ and an element $x\in N^{-1}$ that satisfy $\phi(a) = xJ(a)x^*$ for every $a$ in the domain of $\phi$. 
\item Two operator intervals $M_{(0, \infty)}$ and $N_{sa}$ can never be order isomorphic. 
\end{enumerate}
\end{theorem}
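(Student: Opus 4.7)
My overall strategy is to dispatch Part (3) by a direct surjectivity contradiction, prove Part (2) by reducing to the effect-algebra case (Propositions \ref{linear} and \ref{pq}) on a sub-interval and then propagating by affineness (Corollary \ref{affine}), and finally derive Part (1) by bootstrapping from Part (2) using one more application of Corollary \ref{affine}.

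\emph{Part (3)} is a clean contradiction. Assume such an order isomorphism $\phi \colon M_{(0,\infty)}\to N_{sa}$ exists and fix $a_0\in M_{(0,\infty)}$; put $c_0:=\phi(a_0)$. Because $\phi$ is bijective and preserves order in both directions, it sends $M_{(0,a_0]}$ onto $N_{(-\infty,c_0]}$ and $M_{[a_0,\infty)}$ onto $N_{[c_0,\infty)}$. Since $M_{(0,\infty)}=M_{(0,a_0]}\cup M_{[a_0,\infty)}$, the image of $\phi$ is contained in $N_{(-\infty,c_0]}\cup N_{[c_0,\infty)}$. Because $N$ has no type $\mathrm{I}_1$ direct summand, it contains a projection $p$ with $0\neq p\neq 1$, and then $c_0+2p-1\in N_{sa}$ is comparable with $c_0$ in neither direction, which contradicts surjectivity.

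\emph{Part (2), main case $\phi\colon M_+\to N_+$.} Preservation of the minimum gives $\phi(0)=0$. Select $a_2\in M^{-1}\cap M_+$ so that $\phi(a_2)$, $\phi(a_2/2)$, and $\phi(a_2)-\phi(a_2/2)$ are all invertible in $N$; such a choice is possible because $\phi$ is surjective onto $N_+$. Normalize $\phi|_{M_{[0,a_2]}}$ via Lemma \ref{normalize}(3) to an order isomorphism $\tilde\phi\colon E(M)\to E(N)$ and apply Proposition \ref{linear}: one obtains a Jordan $^*$-isomorphism $J\colon M\to N$ and a positive $B\in\LS(N)$ invertible in $\LS(N)$ with the corresponding linearization $X\mapsto BJ(X)B$. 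The identity $B^2=\tilde\phi(1/2)(1-\tilde\phi(1/2))^{-1}$, together with the arranged invertibility in $N$ of $\tilde\phi(1/2)$ and $1-\tilde\phi(1/2)$, forces $B\in N^{-1}$, so that translating back gives $\phi(a)=xJ(a)x^*$ on $M_{[0,a_2]}$ for some $x\in N^{-1}$. Corollary \ref{affine} then applies with $a_1=0$ and our $a_2$: the midpoint relation $\phi(a_2/2)=\phi(a_2)/2$ is visible from this explicit formula, so $\phi$ is affine on $K=M_+$. Combined with $\phi(0)=0$ this yields $\mathbb{R}_{\ge0}$-linearity, and since the cone generated by $M_{[0,a_2]}$ is all of $M_+$, the formula $\phi(a)=xJ(a)x^*$ holds on $M_+$. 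The subcases $M_{(-\infty,0]}\to N_{(-\infty,0]}$ and $M_{(0,\infty)}\to N_{(0,\infty)}$ reduce to the preceding case by conjugation with negation and, respectively, by restricting to $M_{[a_0,\infty)}$ and arguing that surjectivity onto $N_{(0,\infty)}$ forces any additive constant to vanish.

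\emph{Part (1) bootstraps from Part (2).} Let $\phi_0:=\phi-\phi(0)$, an order isomorphism with $\phi_0(0)=0$. Its restriction to $M_+$ is an order isomorphism $M_+\to N_+$, so Part (2) gives $\phi_0(a)=xJ(a)x^*$ on $M_+$ for some $x\in N^{-1}$ and Jordan $^*$-isomorphism $J$. Apply Corollary \ref{affine} to $\phi_0$ on the operator interval $K=M_{sa}$ with $a_1=0$ and $a_2=1$: the midpoint preservation $\phi_0(1/2)=xJ(1/2)x^*=\tfrac12 xx^*=\tfrac12\phi_0(1)$ is immediate from the formula on $M_+$, so $\phi_0$ is affine on $M_{sa}$ and hence (because $\phi_0(0)=0$) $\mathbb{R}$-linear. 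Linearity together with the formula on $M_+$ and the decomposition $M_{sa}=M_+-M_+$ forces $\phi_0(a)=xJ(a)x^*$ for every $a\in M_{sa}$, whence $\phi(a)=xJ(a)x^*+b$ with $b:=\phi(0)$.

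\emph{Main obstacle.} The delicate point is the first step of Part (2): guaranteeing that the conjugator $B$ produced by Proposition \ref{linear} actually lies in $N^{-1}$, not merely in $\LS(N)^{-1}$. This requires the careful choice of $a_2\in M^{-1}\cap M_+$ so that $\phi(a_2/2)$ and $\phi(a_2)-\phi(a_2/2)$ are both invertible in $N$ (which in turn makes $\tilde\phi(1/2)$ and $1-\tilde\phi(1/2)$ invertible in $N$), and ultimately relies on the surjectivity of $\phi$. Once the sub-interval formula $\phi(a)=xJ(a)x^*$ with $x\in N^{-1}$ is in hand, the two extensions via Corollary \ref{affine} (to $M_+$ in Part (2) and to $M_{sa}$ in Part (1)) are routine.
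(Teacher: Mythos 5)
Your Part (2) contains the decisive gap, and it sits exactly where you flag the "main obstacle," but the problem is worse than getting $B$ bounded. First, the step ``translating back gives $\phi(a)=xJ(a)x^*$ on $M_{[0,a_2]}$'' is a non sequitur: Proposition \ref{linear} only tells you $\tilde\phi(y)=1-\bigl(BJ((1-y)^{-1}-1)B+1\bigr)^{-1}$, and even when $B\in N^{-1}$ this map is in general \emph{not} affine — e.g.\ $J=\mathrm{id}$, $B=\sqrt{2}\cdot 1$ gives $\tilde\phi(y)=2y(1+y)^{-1}$, a \v{S}emrl-type non-affine automorphism of $E(N)$ for which $\tilde\phi(1/2)$ and $1-\tilde\phi(1/2)$ are perfectly invertible in $N$. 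Hence the midpoint identity $\phi(a_2/2)=\phi(a_2)/2$, which you need in order to invoke Corollary \ref{affine}, is precisely what remains unproved; your argument is circular at this point. (In addition, the existence of $a_2$ with $\phi(a_2)$, $\phi(a_2/2)$ and $\phi(a_2)-\phi(a_2/2)$ all invertible in $N$ is asserted ``by surjectivity,'' but surjectivity only lets you prescribe one value of $\phi$, not the relative position of $\phi(a_2/2)$ inside $[0,\phi(a_2)]$.) The idea you are missing is the paper's limit argument: apply Proposition \ref{linear} to the restriction $\phi|_{M_{[0,c]}}$ for \emph{every} $c>2$, use linearity of the resulting map $\Psi_c$ at the two scalars $1/(c-1)$ and $2/(c-2)$ to derive $\phi(c)^{-1}=\bigl(2-\tfrac{2}{c}\bigr)\phi(2)^{-1}-\bigl(1-\tfrac{2}{c}\bigr)\phi(1)^{-1}$ in $\LS(N)$, and then let $c\to\infty$ (surjectivity onto $N_+$ forces $\phi(c)^{-1}\to 0$) to conclude $\phi(2)=2\phi(1)$. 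Only after this does Corollary \ref{affine} give affineness on $M_+$, invertibility of $\phi(1)$ follows from affineness plus surjectivity, and Theorem \ref{Jordan} yields the Jordan $^*$-isomorphism. Your Part (1) bootstrap and the reduction of the remaining subcases of Part (2) are in outline the same as the paper's and would be fine, but they inherit this gap.

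Your Part (3) is also broken. The identity $M_{(0,\infty)}=M_{(0,a_0]}\cup M_{[a_0,\infty)}$ fails for every $M\neq\mathbb{C}$: there are many positive invertible elements incomparable with $a_0$, so the image of $\phi$ is not confined to elements comparable with $c_0$ and no contradiction with surjectivity arises. Indeed, no argument using only the comparability structure and the existence of a nontrivial projection can work, because for a commutative von Neumann algebra $A$ (which, beyond $\mathbb{C}$, has nontrivial projections) the intervals $A_{sa}$ and $A_{(0,\infty)}$ \emph{are} order isomorphic, as remarked after Theorem \ref{rest}; your argument, if valid, would contradict this. The paper proves (3) by a genuinely different route: restricting a hypothetical order isomorphism $M_{(0,\infty)}\to N_{sa}$ to $M_{[1,\infty)}$, running the $M_+$ argument there, and then using Corollary \ref{affine} to show the whole map would be affine on $M_{(0,\infty)}$, which is impossible.
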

\begin{proof}
We first consider the case $\phi\colon M_+\to N_+$ is an order isomorphism. 
We show that $\phi(2)= 2\phi(1)$. 
Put $\phi(1) =: b_1$ and $\phi(2) =: b_2$. 
Let $c>2$ be a positive real number. 
We would like to express $\phi(c)$ by means of $b_1$ and $b_2$. 
We know that $\phi$ restricts to an order isomorphism from $M_{[0, c]}$ onto $N_{[0, \phi(c)]}$. 
By Proposition \ref{linear}, the mapping $\Psi_c\colon \LS(M)_+\to \LS(N)_+$ that is defined by 
\[
\Psi_c(X) = \left(1- \phi(c)^{-1/2}\phi(c(1-(1+X)^{-1}))\phi(c)^{-1/2} \right)^{-1} -1,\quad X\in \LS(M)_+
\]
is a linear mapping. 
We can compute: 
\[
\begin{split}
\Psi_c\left(\frac{1}{c-1}\right) &= (1-\phi(c)^{-1/2}b_1\phi(c)^{-1/2})^{-1} -1,\\
\Psi_c\left(\frac{2}{c-2}\right) &= (1-\phi(c)^{-1/2}b_2\phi(c)^{-1/2})^{-1} -1. 
\end{split}
\]
It follows by linearity that
\[
\frac{1}{c-1} \left((1-\phi(c)^{-1/2}b_2\phi(c)^{-1/2})^{-1} -1\right) =
\frac{2}{c-2} \left((1-\phi(c)^{-1/2}b_1\phi(c)^{-1/2})^{-1} -1\right).
\]
Taking inverses in $\LS(N)$, we have
\[
(c-1) ((\phi(c)^{-1/2}b_2\phi(c)^{-1/2})^{-1} -1) = \frac{c-2}{2}((\phi(c)^{-1/2}b_1\phi(c)^{-1/2})^{-1} -1). 
\]
Consider multiplication by $\phi(c)^{-1/2}\in\LS(N)$ from both sides to obtain
\[
(c-1)b_2^{-1} - (c-1)\phi(c)^{-1} = \frac{c-2}{2} (b_1^{-1} - \phi(c)^{-1}). 
\]
We come to the equation 
\[
\phi(c)^{-1} = \left(2-\frac{2}{c}\right) b_2^{-1} - \left(1- \frac{2}{c}\right) b_1^{-1}. 
\]
Since $\phi(c)^{-1} \to 0$ in norm as $c\to\infty$, we have 
\[
0 = \lim_{c\to \infty} \left(\left(2-\frac{2}{c}\right) b_2^{-1} - \left(1- \frac{2}{c}\right) b_1^{-1} \right)= 2b_2^{-1}-b_1^{-1}
\]
and thus we obtain $b_2 = 2b_1$. 
That is, $\phi(2) = 2\phi(1)$. 

By Corollary \ref{affine}, $\phi$ is affine on $M_+$. 
Thus $\phi(1)$ is invertible in $N$. 
By Theorem \ref{Jordan} and by affinity, the mapping $a\mapsto \phi(1)^{-1/2}\phi(a)\phi(1)^{-1/2}$, $a\in M_+$ extends to a Jordan $^*$-isomorphism from $M$ onto $N$. 
Hence the case of the pair $(M_+, N_+)$ is completed. 
A similar discussion also completes the case of the pair $(M_{(-\infty, 0]}, N_{(-\infty, 0]})$. 

If $\phi$ is an order isomorphism from $M_{sa}$ onto $N_{sa}$, consider the restriction $\phi|_{M_+}$ which is an order isomorphism from $M_+$ onto $N_{[\phi(0), \infty)}$. 
The same discussion as above shows that $\phi$ is affine on $M_+$, and we see that an equation as in the statement of $(1)$ is valid on $M_+$. 
By Corollary \ref{affine}, $\phi$ is affine on $M_{sa}$ and thus the equation holds for all $a\in M_{sa}$. 

Assume that $\phi$ is an order isomorphism from $M_{(0, \infty)}$ onto $N_{(0, \infty)}$ or from $M_{(0, \infty)}$ onto $N_{sa}$. 
Consider the restriction $\phi|_{M_{[1, \infty)}}$ which is an order isomorphism from $M_{[1, \infty)}$ onto $N_{[\phi(1), \infty)}$. 
The same discussion shows that this restriction is affine, and by Corollary \ref{affine}, $\phi$ is affine on $M_{(0, \infty)}$. 
This shows $(3)$. 
If $\phi$ is an order isomorphism from $M_{(0, \infty)}$ onto $N_{(0, \infty)}$, 
$\phi$ can be expressed as the equation in $(1)$, 
but the condition $\phi(M_{(0, \infty)}) = N_{(0, \infty)}$ implies $b=0$. 
\end{proof}

Type I$_1$ cases can be characterized by a measure theoretic method as in Proposition \ref{abel}. 
We omit the details. 

We remark that, for a unital commutative C$^*$-algebra $A$, $A_{sa}$ is order isomorphic to $A_{(0,\infty)}:=\{a\in A\mid 0\leq a,\,\, a \text{ is invertible}\}$. 
Indeed, we can take an order isomorphism $f\colon \mathbb{R}\to (0, \infty)$. Then the mapping $\phi\colon A_{sa}\to A_{(0, \infty)}$ that is defined by $\phi(a) = f(a)$, $a\in A_{sa}$, is an order isomorphism.\medskip 

Lastly, we give another characterization of order isomorphisms between effect algebras. 

\begin{proposition}\label{last}
Let $M$ and $N$ be von Neumann algebras without type I$_1$ direct summands. 
Suppose that $M$ is Jordan $^*$-isomorphic to $N$. 
As in the statement of Proposition \ref{linear}, for an order isomorphism $\phi\colon E(M)\to E(N)$,
we define an order isomorphism $\Phi\colon \LS(M)_+\to \LS(N)_+$ by 
\[
\Phi(X) := \left(1-\phi(1- (1+X)^{-1})\right)^{-1} -1,\quad X\in \LS(M)_+. 
\]
Then the mapping $\phi\mapsto\Phi$ is a bijection from
\[
\{\phi\colon E(M) \to E(N) \text{ order isomorphisms}\}
\]
onto
\[
\{\Phi\colon \LS(M)_+\to \LS(N)_+ \text{ order isomorphisms}\}. 
\]
\end{proposition}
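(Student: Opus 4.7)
The plan is to recognize the given formula as expressing $\Phi = T_N \circ \phi|_{D_M} \circ T_M^{-1}$, where $T_M : D_M \to \LS(M)_+$, $T_M(x) := (1-x)^{-1} - 1$, is the order isomorphism from $D_M := \{x \in E(M) : 1-x \text{ is locally measurably invertible in } M\}$ onto $\LS(M)_+$ recalled at the start of Section \ref{interval}, with inverse $T_M^{-1}(X) = 1-(1+X)^{-1}$, and similarly for $N$. Consequently the proposition reduces to showing that the restriction map $\phi \mapsto \phi|_{D_M}$ is a bijection from order isomorphisms $E(M) \to E(N)$ onto order isomorphisms $D_M \to D_N$. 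Well-definedness holds because the property ``$1-x$ is locally measurably invertible'' admits, via Lemma \ref{invertible}(2), a purely order-theoretic characterization in $E(M)$ (after rescaling $b$ into $E(M)$, it reads: every nonzero $b \in E(M)$ has a nonzero common lower bound with $1-x$), and this is preserved by any order isomorphism.

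For injectivity of the restriction, I would use sup-density of $D_M$ in $E(M)$: the element $(1-1/n)x$ lies in $D_M$ for every $x \in E(M)$ (since $1-(1-1/n)x \geq 1/n$ is invertible in $M$), and $x = \sup_n (1-1/n)x$ by monotone completeness of $N_{sa}$. Hence any order isomorphism $\phi$ satisfies $\phi(x) = \sup_n \phi((1-1/n)x)$, so $\phi$ is determined by $\phi|_{D_M}$.

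For surjectivity, given an order isomorphism $\phi_0 : D_M \to D_N$ (arising from a prescribed $\Phi$ via $\phi_0 := T_N^{-1} \circ \Phi \circ T_M$), I would define $\phi(x) := \sup_n \phi_0((1-1/n)x) \in E(N)$, well-defined by monotone completeness of $N_{sa}$. The crucial technical step is to show $\phi$ is monotone $\sigma$-continuous on $E(M)$. For $x_n \nearrow x$ in $E(M)$ and fixed $k \geq 1$, the sequence $T_M((1-1/k)x_n)$ is an increasing sequence in $M_+$ bounded by $k-1$ with supremum $T_M((1-1/k)x)$; applying the order isomorphism $\Phi$ (which preserves suprema) and using the SOT-monotone convergence of $(1+Y_n)^{-1}$ in $N_+$ for the resulting increasing $Y_n$ in $\LS(N)_+$, one obtains $\sup_n \phi_0((1-1/k)x_n) = \phi_0((1-1/k)x)$ in $E(N)$. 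Interchanging the double suprema then yields $\sup_n \phi(x_n) = \phi(x)$. Specializing to $x \in D_M$ gives $\phi|_{D_M} = \phi_0$. Defining $\psi : E(N) \to E(M)$ symmetrically via $\phi_0^{-1}$, we get another monotone $\sigma$-continuous extension; since $(1-1/n)x \in D_M$ and hence $\psi(\phi((1-1/n)x)) = \phi_0^{-1}(\phi_0((1-1/n)x)) = (1-1/n)x$, the monotone $\sigma$-continuity of $\psi$ yields
\[
\psi(\phi(x)) = \psi\bigl(\sup_n \phi((1-1/n)x)\bigr) = \sup_n \psi(\phi((1-1/n)x)) = \sup_n (1-1/n)x = x,
\]
and symmetrically $\phi \circ \psi = \mathrm{id}_{E(N)}$. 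Thus $\phi$ is an order isomorphism extending $\phi_0$, completing the surjectivity argument.

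The main obstacle I anticipate is the SOT-monotone convergence step: for an increasing sequence $Y_n$ in $\LS(N)_+$ with loc.\ meas.\ supremum $Y$ (possibly unbounded), one must verify that the bounded decreasing sequence $(1+Y_n)^{-1} \in N_+$ has infimum $(1+Y)^{-1}$ in $N_{sa}$, so that its infimum computed in $E(N)$ agrees with $(1+Y)^{-1}$. This should follow from standard monotone convergence for increasing nets of affiliated operators --- $Y_n \to Y$ converges strongly on $\mathrm{dom}(Y)$, and the bounded functional calculus $Y \mapsto (1+Y)^{-1}$ transports this into decreasing SOT-convergence in $N_+$ --- but the handling of the unbounded sup requires care.
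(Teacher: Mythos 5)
Your overall strategy is sound, but it is genuinely different from the paper's. The paper proves surjectivity through its structure theory: imitating the proof of Theorem \ref{rest}, it first shows that an arbitrary order isomorphism $\Phi\colon \LS(M)_+\to\LS(N)_+$ has the form $\Phi(X)=BJ(X)B$ with $J$ a Jordan $^*$-isomorphism and $B\in\LS(N)_+$ invertible in $\LS(N)$, and then manufactures a preimage $\phi$ by combining Propositions \ref{characterization} and \ref{linear}; injectivity is dismissed as easy. You instead transport $\Phi$ to an order isomorphism $\phi_0=T_N^{-1}\circ\Phi\circ T_M$ of $D_M$ onto $D_N$ and extend it to $E(M)$ by monotone sequential continuity, using that $D_M$ is characterized order-theoretically inside $E(M)$ (Lemma \ref{invertible}(2), rescaled) and is sup-dense via $x=\sup_n(1-1/n)x$. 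This is softer: it bypasses the classification results entirely, and it does not even use the hypothesis that $M$ and $N$ are Jordan $^*$-isomorphic (your argument recovers it a posteriori through Theorem \ref{Jordan}); on the other hand, the paper's route yields as a by-product the explicit form $BJ(\cdot)B$ of every order isomorphism of the cones $\LS(M)_+$, which your proof does not provide. The one step I would not accept as written is exactly the one you flag: deducing $\inf_n(1+Y_n)^{-1}=(1+Y)^{-1}$ from strong convergence of $Y_n$ on $\operatorname{dom}(Y)$. The supremum $Y$ in $\LS(N)_+$ is a purely order-theoretic object, and it is not clear that it is any kind of strong limit of the $Y_n$, so the proposed analytic justification is unreliable. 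Fortunately the needed statement has a short order-theoretic proof that fits your framework: the decreasing sequence $(1+Y_n)^{-1}$ has an infimum $c$ in $N_+$ with $c\ge(1+Y)^{-1}$; hence $c$ is locally measurably invertible (any positive element dominating a locally measurably invertible one is, by the criterion of Lemma \ref{invertible}(2)); then $Z:=c^{-1}-1\in\LS(N)_+$ satisfies $Z\ge Y_n$ for all $n$, so $Z\ge Y$ by the definition of the supremum in $\LS(N)_+$, and therefore $c=(1+Z)^{-1}\le(1+Y)^{-1}$, giving equality. With that substitution your argument is complete.
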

\begin{proof}
It is easy to see that the mapping $\phi\mapsto \Phi$ is injective. 
Hence it suffices to show surjectivity. 
Let $\Phi\colon\LS(M)_+\to \LS(N)_+$ be an order isomorphism. 
By imitating the discussion in the proof of Theorem \ref{rest}, we can show the following: 
There exist a Jordan $^*$-isomorphism $J\colon M\to N$ and an element $B\in \LS(N)_+$ which is invertible in $\LS(N)$ such that 
\[
\Phi(X) = BJ(X)B,\quad X\in  \LS(M)_+.  
\]
Since $B\in \LS(N)_+$ is invertible in $\LS(N)$, 
both $1-(1+B^2)^{-1}$ and $1-(1-(1+B^2)^{-1})\,\,(=(1+B^2)^{-1})$ are locally measurably invertible in $N$. 
By Proposition \ref{characterization}, there exists an order isomorphism $\phi_0\colon E(M)\to E(N)$ such that $\phi_0(1/2) = 1-(1+B^2)^{-1}$. 
Then the corresponding mapping $\Phi_0\colon \LS(M)_+\to \LS(N)_+$ satisfies $\Phi_0(1) = B^2$. 
By Proposition \ref{linear}, there exists a Jordan $^*$-isomorphism $J_0\colon M\to N$ such that 
$\Phi_0(X) = BJ_0(X)B$ for every $\LS(M)_+$. 
Put $J_0^{-1}\circ J =: J_1\colon M\to M$. 
By the proof of Proposition \ref{linear}, $\phi_1:= J_1|_{E(M)} \colon E(M)\to E(M)$ corresponds to the order isomorphism $\Phi_1= J_1\colon \LS(M)_+\to \LS(M)_+$. 
Thus the order isomorphism $\phi_0\circ \phi_1\colon E(M)\to E(N)$ corresponds to $\Phi_0\circ \Phi_1 = \Phi$. 
\end{proof}

It is easy to see that, if in addition we assume $M=N$ in this proposition, then the mapping $\phi\mapsto \Phi$ is actually a group isomorphism between the groups of order automorphisms.

\section{Problems}\label{problem}
Our results give rise to further problems on order isomorphisms of operator algebras. 
In this section, we list some of them. 

The first problem is on generalizing our theorem to the setting of C$^*$-algebras.
Kadison's theorem shows the following. 
\begin{proposition}
Let $A$ and $B$ be unital C$^*$-algebras. 
Suppose that $\phi\colon A_{sa}\to B_{sa}$ is a linear order isomorphism. 
Then there exist a Jordan $^*$-isomorphism $J\colon A\to B$ and $x\in B^{-1}$ such that $\phi(a) = xJ(a)x^*$ for every $a\in A_{sa}$.  
\end{proposition}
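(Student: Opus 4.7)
The plan is to reduce the statement to Kadison's theorem (Theorem 1.1) by conjugating $\phi$ with an appropriate invertible positive element of $B$.

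First I would show that $\phi$ maps $A_+$ bijectively onto $B_+$. Since $\phi$ is linear, $\phi(0)=0$, and since $\phi$ is an order isomorphism, $a\geq 0$ if and only if $\phi(a)\geq 0$; the same reasoning applied to $\phi^{-1}$ gives the reverse inclusion. In particular, $b:=\phi(1)\in B_+$.

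The key step is to show that $b$ is invertible in $B$. For this I would use the fact that $1$ is an order unit in $A_{sa}$: for every $a\in A_{sa}$ we have $-\lVert a\rVert\cdot 1\leq a\leq \lVert a\rVert\cdot 1$. Given $c\in B_{sa}$, set $a:=\phi^{-1}(c)$ and apply $\phi$ to the order-unit inequality for $a$ to obtain $-\lVert a\rVert b\leq c\leq \lVert a\rVert b$. Hence $b$ is an order unit in $B_{sa}$, which is equivalent to $b$ being invertible in $B$ (since $b$ positive and an order unit means $b\geq \varepsilon\cdot 1$ for some $\varepsilon>0$, i.e.\ $b\in B^{-1}$).

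With invertibility in hand, put $x:=b^{1/2}\in B^{-1}$ and define $\psi\colon A_{sa}\to B_{sa}$ by $\psi(a):=x^{-1}\phi(a)x^{-1}$. The map $c\mapsto x^{-1}cx^{-1}$ is a linear order automorphism of $B_{sa}$, so $\psi$ is again a linear order isomorphism, and now $\psi(1)=x^{-1}bx^{-1}=1$. By Kadison's theorem, $\psi$ extends to a (unique) Jordan $^*$-isomorphism $J\colon A\to B$, so $\psi(a)=J(a)$ for all $a\in A_{sa}$. Unwinding the definition yields $\phi(a)=xJ(a)x=xJ(a)x^{*}$ (recall $x=x^{*}$), which is the desired form.

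The only nontrivial point is the invertibility of $b=\phi(1)$; everything else is automatic from linearity and Kadison's theorem. I do not foresee any obstacle beyond writing the order-unit argument carefully.
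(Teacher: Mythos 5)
Your proposal is correct and follows essentially the same route as the paper: establish invertibility of $\phi(1)$ by applying $\phi$ to the order-unit inequality for $\phi^{-1}(1)$ (the paper phrases this as choosing a scalar $k$ with $k\geq \phi^{-1}(1)$, so that $k\phi(1)=\phi(k)\geq 1$), then conjugate by $\phi(1)^{-1/2}$ to reduce to the unital case and invoke Kadison's theorem.
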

\begin{proof}
Take a positive number $k$ such that $k\geq\phi^{-1}(1)$. 
Then $\phi(k)\geq 1$, thus $\phi(1)\, (\geq 1/k)$ is an invertible operator in $B$. 
Define a mapping $J\colon A_{sa}\to B_{sa}$ by $J(a):= \phi(1)^{-1/2}\phi(a)\phi(1)^{-1/2}$, $a\in A_{sa}$. 
Then $J$ is a unital linear order isomorphism. 
It follows by Theorem \ref{Kadison} (due to Kadison) that $J$ extends to a Jordan $^*$-isomorphism from $A$ onto $B$. 
\end{proof}

Thus the following problem is of interest. 
\begin{problem}
Let $A$ and $B$ be two C$^*$-algebras and suppose that $\phi\colon A_{sa}\to B_{sa}$ is an order isomorphism. 
When does it follow automatically that $\phi$ is an affine mapping?
\end{problem}
In the setting of von Neumann algebras, the only exception was commutative (type I$_1$) cases (Theorem \ref{rest}(1)). 
Is it also true for C$^*$-algebras?

It seems to be highly challenging to give a general form of order isomorphisms between operator intervals in C$^*$-algebras. 
The following problem is motivated by Theorem \ref{Jordan}.
\begin{problem}
Let $A$ and $B$ be two unital C$^*$-algebras.  
Suppose that $\phi\colon \{a\in A_{sa}\mid 0\leq a\leq 1\}\to \{b\in B_{sa}\mid 0\leq b\leq 1\}$ is an order isomorphism with $\phi(1/2)=1/2$. 
When does $\phi$ admit an extension to some Jordan $^*$-isomorphism from $A$ onto $B$?
\end{problem}

In another direction, we can consider the following problem:
\begin{problem}
Let $M$ and $N$ be two von Neumann algebras. 
Suppose that $\phi\colon M_{*sa}\to N_{*sa}$ is an order isomorphism between self-adjoint parts of preduals. 
Can we characterize such a mapping by means of some Jordan $^*$-isomorphism from $M$ onto $N$? 
\end{problem}
Note that this problem was partly considered by Araki in \cite{A} with a much stronger assumption. 
His additional assumption is that $M$ is of type I and without I$_2$ direct summands, $\phi$ is continuous, and $\phi$ preserves linear structure and Jordan decompositions. 

More generally, one may also consider order isomorphisms of noncommutative $L^p$-spaces. 
For results on linear order isomorphisms between noncommutative $L^2$-spaces, see \cite[{Th\'eor\`{e}me 3.3}]{C} by Connes and \cite{Mi} by Miura. 
In the commutative case $M=L^{\infty}(\mu)$ for a measure $\mu$, $L^p(\mu)_{sa}$ ($1\leq p <\infty$) is order isomorphic to $L^1(\mu)_{sa}$ by the mapping $f\mapsto f\lvert f\rvert^{p-1}$. 
In finite dimensional cases, noncommutative $L^p$-spaces are isomorphic to the underlying von Neumann algebra as ordered vector spaces. 
Then, what is the general form of an order isomorphism from an interval of a  noncommutative $L^p$-space onto an interval of a noncommutative $L^q$-space in the setting of general von Neumann algebras? 

\medskip\medskip

\textbf{Acknowledgements} \quad 
The author is grateful to Yasuyuki Kawahigashi, the advisor of the author, for his invaluable support, and to Gilles Pisier and the anonymous referee for helpful comments. 
This work was supported by Leading Graduate Course for Frontiers of Mathematical Sciences and Physics, MEXT, Japan.

\end{document}